\documentclass[11pt,a4paper]{article}
\usepackage[T1]{fontenc}
\usepackage{lmodern}
\usepackage{amsmath,amssymb,amsthm,mathtools,mathrsfs}
\usepackage{tikz-cd}
\usetikzlibrary{arrows.meta,calc,positioning}
\usepackage{booktabs,array,enumitem}
\usepackage{indentfirst}
\usepackage[margin=27mm]{geometry}
\usepackage{microtype}
\usepackage{hyperref}
\usepackage[nameinlink,noabbrev]{cleveref}
\hypersetup{colorlinks=true,linkcolor=blue!45!black,citecolor=blue!45!black,urlcolor=blue!45!black,pdftitle={Canonical representatives in Ekedahl's equivalence},pdfauthor={Yuan Yang}}
\setlist[enumerate]{label=\textup{(\arabic*)},leftmargin=*,itemsep=2pt,topsep=5pt}
\numberwithin{equation}{section}
\theoremstyle{plain}
\newtheorem{theorem}{Theorem}[section]
\newtheorem{proposition}[theorem]{Proposition}
\newtheorem{lemma}[theorem]{Lemma}
\newtheorem{corollary}[theorem]{Corollary}

\newtheorem*{theoremA}{Theorem A}
\newtheorem*{theoremB}{Theorem B}
\newtheorem*{theoremC}{Theorem C}
\theoremstyle{definition}
\newtheorem{definition}[theorem]{Definition}

\newcommand{\Z}{\mathbb Z}

\newcommand{\F}{\mathbb F}
\newcommand{\HH}{\mathrm H}
\newcommand{\Hom}{\operatorname{Hom}}
\newcommand{\End}{\operatorname{End}}

\newcommand{\Ch}{\mathrm{Ch}}
\newcommand{\Tot}{\mathrm{Tot}}
\newcommand{\cris}{\mathrm{cris}}

\newcommand{\Fil}{\mathrm{Fil}}

\newcommand{\FGauge}{F\text{-}\mathsf{Gauge}}
\newcommand{\FG}{F\mathsf G}
\newcommand{\rmt}{\mathrm t}
\newcommand{\rmL}{\mathrm L}
\newcommand{\I}{\mathrm I}
\newcommand{\td}{\text{-}}

\newcommand{\tH}{\widetilde{\HH}}
\newcommand{\tF}{\widetilde F}
\newcommand{\tV}{\widetilde V}

\newcommand{\dc}{\mathord{\mathchoice{\dcl{0.16}}{\dcl{0.16}}{\dcl{0.11}}{\dcl{0.08}}}}
\newcommand{\dcl}[1]{\raisebox{-0.15ex}{\hbox{\begin{tikzpicture}[scale=#1,line width=0.6pt]\draw (0,0)--(2,0)--(1.2,1.75)--(0.8,1.75)--cycle;\draw (1.6,0)--(0.8,1.75);\end{tikzpicture}}}}
\title{Canonical representatives and interval independence\\in Ekedahl's equivalence}
\author{Yuan Yang}
\date{}
\begin{document}
\maketitle

\begin{abstract}
Ekedahl's equivalence reconstructs coherent Raynaud complexes from
coherent \(F\)-gauges by a completed derived functor. We study this
inverse before totalization. At a fixed finite level, we construct
finite projective resolutions and express the inverse as a tensor
product with a complex of bimodules. Its cohomology in the resolution
direction gives actual \(R\)-complex representatives of diagonal
cohomology. We prove that these representatives are independent of
the interval and obtain an exact fully faithful functor from the
diagonal heart to \(\Ch(R)\), splitting localization. Finally, we
compute the domino factors for three cyclic crystalline presentations
of supergeneral abelian threefolds. The resulting elementary factor
types are \(\{2,3,4\}\), \(\{2,3,7\}\), and \(\{2,3,6\}\).
\end{abstract}

\tableofcontents

\section{Introduction}\label{paper:introduction}

Ekedahl's equivalence relates coherent complexes over the Raynaud
ring to coherent complexes of \(F\)-gauges \cite{Ek3}. It provides
an algebraic framework for the interaction of Frobenius, Verschiebung,
and the de Rham differential \cite{Il1,Ek1}. The equivalence takes place in derived
categories, but explicit calculations require more: a representative
of the inverse, control of its two complex degrees, and an account
of the finite interval used in the construction. This paper addresses
these questions before passing to the derived category of
\(R\)-modules.

Let \(k\) be a perfect field of characteristic \(p>0\), let
\(W=W(k)\), and let \(R\) be its Raynaud ring. At a finite level
\(\I=[m,n]\), Ekedahl's functor \(\FG^{\Tot}\) gives an
equivalence between \(D_c^b(R\td\I)\) and
\(D_c^b(\FGauge\td\I)\); its inverse is
\(\widehat{(\rmL\rmt)^{\Tot}}\)
\cite[Proposition~II.4.7 and Theorem~II.5.3]{Ek3}.
The hat on the \(R\)-side denotes Ekedahl completion. The functor
\(\rmt\) is defined by generators and relations, and its values
are complexes of graded \(R\)-modules. Deriving it therefore
introduces a further complex degree, which totalization subsequently
forgets.

We retain this degree. Gauges of level \(\I\) form a module
category over
\(A_\I=\operatorname{End}(\oplus_{r\in\I}G_r)^{\mathrm{op}}\),
where \(G_r\) represents evaluation in gauge degree \(r\).
The completed underived functor \(\widehat\rmt_\I\) is a tensor
functor over \(\widehat A_\I\), with values in actual complexes of
completed \(R\)-modules. The distinction between
\(\widehat{(\rmL\rmt)^{\Tot}}\) and
\((\rmL\widehat\rmt)^{\Tot}\) is essential: the first completes
the derived inverse, while the second derives the completed
underived functor. We identify them on bounded coherent inputs.

Our first result supplies finite resolutions for this calculation.

\begin{theoremA}
Every object of \(D_c^b(\FGauge\td\I)\) is represented by a
bounded complex of finite direct sums of the \(G_r\), \(r\in\I\).
Termwise \(p\)-completion gives a bounded projective representative
of its derived extension of scalars to \(\widehat A_\I\).
\end{theoremA}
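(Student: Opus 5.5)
We reduce Theorem A to finite-dimensionality and finite global dimension of \(A_\I\) over \(W_n\)-type coefficients. Then, via \(\FGauge\td\I\simeq A_\I\td\mathrm{Mod}\) and this structure, we show that coherent objects admit bounded projective resolutions by summands of \(A_\I=\oplus_{r\in\I}G_r\). The completion statement then follows from flatness of \(\widehat A_\I\) over \(A_\I\) on finitely generated modules.

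\textbf{Step 1 (structure of \(A_\I\)).} We first compute \(\Hom(G_r,G_s)\) from the defining property of \(G_r\) (evaluation in degree \(r\)). This gives \(\Hom(G_r,G_s)=(G_s)_r\), the degree-\(r\) part of the representable gauge. These groups are generated over \(W\) by monomials in \(F\) and \(V\) between degrees \(r\) and \(s\). Hence \(A_\I\) is a finitely generated \(W\)-algebra, and it is module-finite over a Noetherian central subring. In particular \(A_\I\) is left and right Noetherian, and coherent gauges of level \(\I\) correspond to finitely generated \(A_\I\)-modules. Since each \(G_r\) is a direct summand of \(A_\I\) via the idempotent \(e_r\), finitely generated projectives are exactly finite sums of the \(G_r\), provided \(A_\I\) is semiperfect with primitive idempotents \(e_r\). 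We check this by showing that \(e_rA_\I e_r\) is local. It is \(W\) modulo the ideal generated by \(FV=p\)-type loops, so it is complete local.

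\textbf{Step 2 (finite global dimension).} This is the main obstacle. We must show that every finitely generated \(A_\I\)-module has finite projective dimension. Our first approach filters by gauge degree: \(A_\I\) is a quasi-hereditary / directed-type algebra whose simple-like layers are indexed by \(r\in\I\), and the standard modules attached to the endpoints \(m\) and \(n\) behave like \(W\)- or \(k[d]\)-modules of finite global dimension. Concretely, we would construct explicit finite resolutions of the "simple" gauges \(S_r\) (the top of \(G_r\)) using the Koszul-type complexes \(G_{r\pm1}\to G_r\) given by \(F\) and \(V\), together with the \(d\)-relations. The length is bounded in terms of \(n-m\), and this is where the finiteness of \(\I\) enters. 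A long-exact-sequence induction over composition series then bounds the projective dimension of any finite-length module. For a general finitely generated module we additionally use the \(p\)-adic filtration, since \(W\) has global dimension \(1\). Given finite global dimension, any object of \(D_c^b\) is quasi-isomorphic to a bounded-above complex of finitely generated projectives, and truncating at the global dimension gives a bounded complex of finite sums of the \(G_r\).

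\textbf{Step 3 (completion).} Let \(P^\bullet\) be the bounded projective representative. Then \(\widehat A_\I\otimes^{\rmL}_{A_\I}P^\bullet=\widehat A_\I\otimes_{A_\I}P^\bullet\), because the terms are projective. Moreover \(\widehat A_\I\otimes_{A_\I}e_rA_\I=\widehat A_\I e_r\), which is the \(p\)-completion of \(G_r\), since \(G_r\) is finitely generated over the Noetherian \(A_\I\) and completion of finitely generated modules is given by tensoring. Hence termwise completion computes the derived extension of scalars. The result is bounded, and each term \(\widehat A_\I e_r\) is projective over \(\widehat A_\I\) as a summand of the free module.
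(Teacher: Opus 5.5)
Your Step~3 is fine: on a bounded complex of finite sums of \(G_r=A_\I e_r\), derived extension of scalars is computed termwise, and \(\widehat A_\I\otimes_{A_\I}A_\I e_r=\widehat A_\I e_r\) is the \(p\)-completion of \(G_r\). The gap is in Steps~1--2, which rest on a wrong picture of \(A_\I\).

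The endpoint identification \(\tau\) closes the quiver into a cycle. So \(e_rA_\I e_r=G_r^r\) is not ``\(W\) modulo \(p\)-type loops''. It contains the full forward and backward return cycles \(\Phi,\Psi\), with \(\Phi\Psi=\Psi\Phi=p^{n-m}\); for \(m=n\) it is the skew Laurent ring \(W_\sigma[\tau,\tau^{-1}]\). Neither \(1-\Phi\) nor \(1-p\Phi\) is a unit. Several consequences follow:
\begin{itemize}
\item \(A_\I\) is not semiperfect, and \(p\) is not in its Jacobson radical.
\item Each \(G_r\) has components of infinite \(W\)-rank, so it is finitely generated but not coherent.
\item The natural finiteness subring is the skew ring \(W_\sigma[\Phi,\Psi]/(\Phi\Psi=p^{n-m})\), not a central one. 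Noetherianity still holds, via skew polynomial rings, but not for your reason.
\end{itemize}

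Two of your steps then fail. First, your list of simples is incomplete. Besides the vertex simples there are simple finite-length gauges on which a full return operator such as \(\tau\tF^{n-m}\) is invertible. An example is \(k\) in each degree of \(\I\), with \(\tF\) bijective, \(\tV=0\) inside \(\I\), and \(\tau\) acting by Frobenius. The paper resolves these by lifting the cyclic skew polynomial \(P(\Phi)\) and using \(0\to G_m\to G_m\oplus G_m\to G_m\to S\to0\). (There are also no \(d\)-relations on the gauge side; \(d\) belongs to \(R\).) Second, truncating at the global dimension leaves a kernel known only to be projective. Your claim that finitely generated projectives are finite sums of the \(G_r\) was deduced from the false locality of \(e_rA_\I e_r\), so you do not obtain terms of the required form.

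The more serious missing idea is positive rank. The \(p\)-adic filtration of a torsion-free coherent gauge \(M\) is infinite, and \(p\notin\operatorname{rad}A_\I\). Hence finite projective dimension \(g\) of \(M/pM\) does not by itself bound that of \(M\). It only forces \(\operatorname{Ext}^i_{A_\I}(M,-)\), for \(i>g\), to be uniquely \(p\)-divisible, i.e.\ computed over the generic fibre \(A_\I[1/p]\). The global dimension of \(W\) is beside the point; what is needed is control of isocrystals.

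The paper supplies exactly this in its rank-lowering lemma:
\begin{itemize}
\item Choose a simple subisocrystal of slope \(\lambda\) and a cyclic vector \(x\in M^l\), with \(l=\lfloor\lambda\rfloor\).
\item The Newton-polygon bounds on the coefficients of its monic skew polynomial \(P\), together with the multiplicative valuations \(v_0,v_n\), identify \(\ker(G_l\to L)\) degreewise with \(\Lambda_{i-l+e}P\).
\item Right multiplication by \(\Phi^tP\) identifies this kernel with \(G_j\), giving \(0\to G_j\to G_l\to L\to0\).
\end{itemize}
Induction on rank inside the triangulated subcategory of objects represented by bounded complexes of finite sums of \(G_r\), together with the finite-length resolutions, then proves the theorem. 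This route needs neither global dimension nor any classification of projectives.
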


The rank reduction underlying this theorem is explicit. A cyclic
subisocrystal gives a subgauge \(L\) with a presentation
\(0\to G_j\to G_l\to L\to0\). Laurent coordinates turn its
kernel into an intersection of two valuation lattices. The remaining
finite-length gauges are treated by the normal forms of the gauge
algebra. The resolution theorem thus supplies a finite complex on
which the inverse can be computed.

Write \(\dc\) for the heart of the diagonal \(t\)-structure on
\(D_c^b(R)\), and \(\tH^i\) for diagonal cohomology. Taking
cohomology of \(\rmL\widehat\rmt\) in the resolution direction
produces objects \(\rmL^i\widehat\rmt(M)\) of \(\Ch(R)\).
Our second result identifies these objects and removes the dependence
on the chosen level.

\begin{theoremB}
Let \(M\in D_c^b(\FGauge\td\I)\), and put
\(N=\widehat{(\rmL\rmt)^{\Tot}}(M)\). Then
\(\rmL^i\widehat\rmt_\I(M)\) represents \(\tH^i(N)\).
If \(N\in\dc\), these actual \(R\)-complexes vanish for
\(i\ne0\). If \(\I\subseteq\mathrm J\), change of level gives
compatible isomorphisms
\[
 \rmL^i\widehat\rmt_{\mathrm J}(E_{\mathrm J\I}M)
 \cong\rmL^i\widehat\rmt_\I(M)
 \qquad\text{in }\Ch(R).
\]
\end{theoremB}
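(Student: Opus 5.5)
The plan is to work throughout with the finite projective representative supplied by Theorem A. Choose a bounded complex \(P^\bullet\) of finite direct sums of the \(G_r\), \(r\in\I\), representing \(M\), and let \(\widehat P^\bullet\) be its termwise \(p\)-completion, a bounded projective complex over \(\widehat A_\I\). Since \(\widehat\rmt_\I\) is a tensor functor over \(\widehat A_\I\), it sends each \(\widehat G_r\) to an actual complex of completed graded \(R\)-modules, namely \(\widehat\rmt(G_r)\). Applying it termwise to \(\widehat P^\bullet\) therefore gives a double complex: one degree comes from the resolution, the other is internal to the values of \(\rmt\). By definition, \(\rmL^i\widehat\rmt_\I(M)\) is its cohomology in the resolution direction, and this is an object of \(\Ch(R)\).

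\textbf{Step 1 (identification).} First compare the two inverses and show \(N\cong(\rmL\widehat\rmt)^{\Tot}(M)\) for bounded coherent \(M\). I would use the fact that on finite sums of \(G_r\) completion commutes with \(\rmt\), since \(\rmt(G_r)\) is coherent and completion is exact on coherent objects. Boundedness of \(P^\bullet\) then lets the comparison pass through totalization.

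\textbf{Step 2 (a diagonal filtration).} Next, filter the double complex by resolution degree. Via the Raynaud-ring description of \(\widehat\rmt(G_r)\), I would check that each column \(\widehat\rmt(G_r)\) lies in the diagonal heart \(\dc\), with the grading shift built in. The columnwise filtration of the totalization then has graded pieces in \(\dc[-i]\). The associated spectral sequence computes \(\tH^\bullet(N)\), and its \(E_1\)-differentials are maps between objects of \(\dc\) placed in consecutive columns.

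\textbf{Step 3 (degeneration).} From this I would derive \(\tH^i(N)\cong \rmL^i\widehat\rmt_\I(M)\), regarded in \(D(R)\). The natural route is to prove degeneration at \(E_2\): a higher differential \(d_s\) with \(s\ge2\) would be a morphism in \(\operatorname{Ext}^{1-s}\) between hearts, and such groups vanish for \(1-s<0\). This identification only makes sense if the column cohomology \(\rmL^i\widehat\rmt_\I(M)\), computed in the abelian category of complexes, is itself a heart object representing \(\tH^i\). For that I need the heart to be closed under kernels and cokernels of the chain maps that arise. I expect this exactness statement to be the main obstacle. To handle it, I would first try to show that these chain maps are strict for the diagonal filtration on the \(R\)-complexes \(\widehat\rmt(G_r)\), using their explicit form. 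With the identification in hand, the vanishing claim is immediate: if \(N\in\dc\) then \(\tH^i(N)=0\) for \(i\ne0\), and a heart-valued complex representing zero is acyclic. To get actual vanishing rather than mere acyclicity, I would use that the representatives are the specific column cohomologies and argue that these are zero on the nose.

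\textbf{Step 4 (change of level).} Finally, for \(\I\subseteq\mathrm J\), I would note that \(E_{\mathrm J\I}\) is extension of scalars along \(A_\I\to A_{\mathrm J}\) and sends \(G_r\) to \(G_r\). Hence it carries \(P^\bullet\) to a complex of the same shape, which is a resolution of \(E_{\mathrm J\I}M\). The needed compatibility is \(\widehat\rmt_{\mathrm J}(G_r)\cong\widehat\rmt_\I(G_r)\), naturally in maps between the \(G_r\). I would deduce this from the generators-and-relations definition of \(\rmt\), whose value on \(G_r\) depends only on \(r\). The isomorphism of double complexes then gives the stated isomorphisms in \(\Ch(R)\). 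Compatibility for \(\I\subseteq\mathrm J\subseteq\mathrm K\) follows from \(E_{\mathrm K\mathrm J}E_{\mathrm J\I}\cong E_{\mathrm K\I}\). Independence of the choice of \(P^\bullet\) comes from homotopy uniqueness of projective resolutions, which induces isomorphisms on column cohomology.
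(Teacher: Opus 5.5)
\textbf{Steps 2--3.} These steps do not go through, and the two points the statement turns on are left without a mechanism.

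First, the diagonal \(t\)-structure exists only on \(D_c^b(R)\), and your columns are not coherent. The \(G_r\) have components of infinite \(W\)-rank. Already for \(m=n\), the column \(\widehat\rmt_\I(\widehat G_{\I,m})\) is \(\widehat R^{\mathrm{top}}\) placed in bidegree \((m,-m)\). So there is no filtered object with graded pieces in \(\dc[-i]\) for your spectral sequence. Similarly, in Step 1, \(\rmt(G_r)\) is a complex of free \(R\)-modules, not a coherent one. The right input there is the termwise identity \(\widehat\rmt_\I(\widehat G_{\I,r})=\widehat{\rmt_\I(G_{\I,r})}\), together with commuting derived completion past a finite totalization.

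Second, you rightly flag that termwise kernels and cokernels in \(\Ch(R)\) must compute heart cohomology, and that vanishing must be strict. But ``strictness for the diagonal filtration'' and ``zero on the nose'' are not arguments. The paper instead uses the ordinary-cohomology spectral sequence that takes outer cohomology first, \(E_1^{p,q}=(\rmL^q\widehat\rmt_\I(\FG^{\Tot}(N)))^p\Rightarrow\HH^{p+q}(N)\). This needs no coherence of the columns.
\begin{itemize}
\item The inner-degree \(-u\) terms are supported in \(R\)-module degrees \(u,u+1\), so the sequence degenerates at \(E_2\).
\item If all \((E_2^{-u,q})^u\) vanish, you get injections \((E_1^{-u,q})^u\hookrightarrow(E_1^{-u+1,q})^u\).
\item Flatness of \(\widehat R^{\,1}\) as a right \(\widehat R^{\,0}\)-module identifies that target with \(\widehat R^{\,1}\otimes_{\widehat R^{\,0}}(E_1^{-u+1,q})^{u-1}\). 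Induction from \(u=m\) then kills the whole row as an actual complex.
\item For \(N\in\dc\), the support condition removes all rows except \(q=0,1\). The \(F^\infty B\)-saturation condition removes row \(1\).
\end{itemize}
Once \(\rmL\widehat\rmt_\I\circ\FG^{\Tot}\) sends \(\dc\) into the standard outer heart, boundedness makes it \(t\)-exact. It therefore commutes with cohomology, giving \(\rmL^a\widehat\rmt_\I(M)\cong\mathcal S_\I(\tH^a(N))\). This \(t\)-exactness is what resolves your closure-under-kernels problem; no statement about individual chain maps is needed.

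\textbf{Step 4.} This step rests on a false premise. \(E_{\mathrm J\I}\) is not extension of scalars along a ring map \(A_\I\to A_{\mathrm J}\), and it does not send \(G_{\I,r}\) to \(G_{\mathrm J,r}\). Take \(\I=[0,0]\subset\mathrm J=[0,1]\). The forward return \(\tau_{\mathrm J}\tF\) acts invertibly on \(E_{\mathrm J\I}G_{\I,0}\), which is \(W_\sigma[\tau,\tau^{-1}]\) at both vertices. So this gauge is a localization of \(G_{\mathrm J,0}\), resolved by the complex with differential \(\cdot(1-\tau_{\mathrm J}\tF z)\); it is flat but not projective. Moreover, \(\rmt_{\mathrm J}(G_{\mathrm J,r})\) has inner terms indexed by \(\mathrm J\), not \(\I\), so there is no level-independent value on generators. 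Hence \(E_{\mathrm J\I}\widehat P^\bullet\) is not a projective resolution at level \(\mathrm J\), and there is no termwise isomorphism of double complexes.

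The paper needs two further ingredients.
\begin{itemize}
\item The old completed generators are \(\widehat\rmt_{\mathrm J}\)-acyclic. This comes from explicit two-term flat resolutions by \(\widehat G_{\mathrm J,s}\) and the series modules \(\widehat G_{\mathrm J,s}\langle z\rangle\), plus a Tor computation against \(p\)-torsion-free coefficients.
\item The comparison \(\widehat\rmt_{\mathrm J}(E_{\mathrm J\I}\widehat G_{\I,r})\to\widehat\rmt_\I(\widehat G_{\I,r})\) is an isomorphism only after derived reduction by \(R_1\), shown by polynomial elimination. It is lifted to \(D(\Ch(R))\) row by row. Both sides are Ekedahl-complete; for the larger level, this uses a resolution of \(E_{\mathrm J\I}M\) by its own generators. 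And \(R_1\)-reduction detects vanishing of complete finite-level complexes.
\end{itemize}
Only after this does outer cohomology give the isomorphisms in \(\Ch(R)\).
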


Strict vanishing is the point: an acyclic \(R\)-complex need not
be zero. In the present construction, the support of the coefficient
bimodules forces the spectral sequence to degenerate at \(E_2\).
Flatness of \(\widehat R^{\,1}\) over \(\widehat R^{\,0}\)
then upgrades cohomological vanishing to vanishing of the entire
resolution row. This strengthens the comparison for individual
gauges in \cite[Theorem~II.5.6 and Corollary~II.5.6.1]{Ek3}.

The resulting control of representatives has the following consequence.

\begin{theoremC}
There is an exact fully faithful functor
\[
 \mathcal S:\dc\longrightarrow\Ch(R),\qquad
 q\mathcal S\simeq\mathrm{id}_{\dc},
\]
where \(q:\Ch(R)\to D(R)\) is localization. Its values are
supported in bidegrees \((u,-u)\) and \((u,1-u)\).
Every derived morphism between these representatives is induced by
a unique chain map; every chain homotopy between such maps is zero.
\end{theoremC}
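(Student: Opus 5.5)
The plan is to define \(\mathcal S\) by choosing, for each \(N\in\dc\), a level \(\I\) and a gauge complex \(M\in D_c^b(\FGauge\td\I)\) with \(\widehat{(\rmL\rmt)^{\Tot}}(M)\simeq N\); this is possible by Ekedahl's equivalence. We then set \(\mathcal S(N)=\rmL^0\widehat\rmt_\I(M)\). Theorem~A gives a bounded complex \(P\) of finite sums of the \(G_r\) representing \(M\), so \(\rmL\widehat\rmt_\I(M)\) is an honest double complex: the resolution direction is indexed by \(P\), and the internal \(R\)-direction comes from the values \(\widehat\rmt_\I(G_r)\). By Theorem~B, since \(N\in\dc\), the rows \(\rmL^i\widehat\rmt_\I(M)\) vanish \emph{as complexes} for \(i\ne0\). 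Hence the totalization is chain-homotopy equivalent, and in fact quasi-isomorphic via the natural row filtration, to \(\rmL^0\widehat\rmt_\I(M)\). This gives \(q\mathcal S(N)\simeq N\). Theorem~B also makes \(\mathcal S(N)\) independent of \(\I\) up to canonical isomorphism in \(\Ch(R)\), via \(E_{\mathrm J\I}\) and passage to a common larger interval. Independence of the chosen resolution \(P\) follows because two resolutions are homotopy equivalent, and \(\rmL^0\) is a homology object in the resolution direction, so homotopic maps induce equal maps on it.

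For functoriality, take a morphism \(N\to N'\) in \(\dc\). It comes from a morphism \(M\to M'\) in \(D_c^b(\FGauge\td\I)\), which by Theorem~A is realized by a chain map of projective resolutions \(P\to P'\), unique up to homotopy. Applying \(\widehat\rmt_\I\) and taking \(\rmL^0\) gives a well-defined chain map \(\mathcal S(N)\to\mathcal S(N')\), and compatibility with composition is clear. Exactness comes from the same argument. A short exact sequence in \(\dc\) is a distinguished triangle with all three terms in the heart, so it lifts to a triangle of gauge complexes. The long exact sequence of \(\rmL^i\widehat\rmt\) in the resolution direction is taken degreewise in \(\Ch(R)\), since it is the homology of a short exact sequence of resolution complexes obtained through a mapping-cone model. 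All \(i\ne0\) terms vanish, so we get a short exact sequence of actual complexes.

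The support claim and full faithfulness are where the main work lies. First, compute \(\widehat\rmt_\I(G_r)\) explicitly as a complex of graded \(\widehat R\)-modules. The coefficient bimodule of the tensor description is concentrated in \(R\)-degrees \(0\) and \(1\), coming from \(\widehat R^{\,0}\) and \(\widehat R^{\,1}\), placed along the diagonal. This should give the support in bidegrees \((u,-u)\) and \((u,1-u)\): total degree \(0\) or \(1\), matching the fact that objects of \(\dc\) have diagonal cohomology in a single degree.

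For full faithfulness, we use this two-diagonal support. A chain homotopy \(h\colon\mathcal S(N)\to\mathcal S(N')\) raises total degree by \(-1\). It therefore maps the degree-\(0\) diagonal into a degree \(-1\) diagonal, and the degree-\(1\) diagonal into the degree-\(0\) diagonal, but the latter must also respect the internal grading \(u\). I expect a bidegree count to show that every such component vanishes, so that all homotopies are zero. Hence \(\Hom_{K(R)}=\Hom_{\Ch(R)}\).

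It remains to show that \(\Hom_{D(R)}(\mathcal S N,\mathcal S N')\) equals \(\Hom_{K(R)}\) for these representatives. This is the main obstacle. I would try to show that \(\mathcal S(N)\) is K-projective relative to targets of this shape, or at least that \(\Hom_{D}\) can be computed by maps out of it. The plan is to use that the terms are built from completed projectives \(\widehat R^{\,0}\), \(\widehat R^{\,1}\)-modules, together with the flatness of \(\widehat R^{\,1}\) over \(\widehat R^{\,0}\) quoted after Theorem~B. Failing that, we compare both Hom groups with \(\Hom_{\dc}(N,N')\): the derived one via \(q\mathcal S\simeq\mathrm{id}\), and the chain one via the lift of morphisms constructed above. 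Injectivity of the chain-level side would then follow from the vanishing of homotopies together with the uniqueness statement of Theorem~B.
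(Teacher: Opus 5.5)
Your construction of \(\mathcal S\), the identification \(q\mathcal S\simeq\mathrm{id}\), exactness, and the support statement all go through essentially as in the paper, via strict outer vanishing and interval independence. The gap is in the last two assertions, exactly where you flag it.

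\textbf{The homotopy claim.} A bidegree count does not kill homotopies. A component \(h^v\colon\mathcal S(N)^v\to\mathcal S(N')^{v-1}\) may send the \(R\)-module degree \(1-v\) part of the source, in bidegree \((1-v,v)\), to the degree \(1-v\) part of the target, in bidegree \((1-v,v-1)\). Both lie in the permitted support. Graded \(R\)-linearity only forces \(h^v\) to vanish on the submodule generated by the degree \(-v\) component. Nothing in your argument says the terms of \(\mathcal S(N)\) are generated there.

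\textbf{The derived comparison.} Route (b) is circular. Since \(q\mathcal S\simeq\mathrm{id}\), you already know that \(\Hom_{\Ch(R)}(\mathcal SN,\mathcal SN')\to\Hom_{D(R)}(N,N')\) is surjective and that \(\mathcal S\) is faithful. Fullness is equivalent to injectivity of this map, that is, a chain map that vanishes in \(D(R)\) must be zero. Zero homotopies would only give \(\Hom_{\Ch(R)}=\Hom_{K(R)}\). The kernel of \(\Hom_{K(R)}\to\Hom_{D(R)}\) is controlled by the groups \(\operatorname{Ext}^{r-1}_R(C^s,D^{s-r})\), \(r\geq2\). Theorem~B contains no uniqueness statement about these. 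Route (a) uses the wrong input: flatness of \(\widehat R^{\,1}\) over \(\widehat R^{\,0}\) is what gives strict outer vanishing, but the terms of \(\mathcal S(N)\) are not K-projective.

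\textbf{The missing idea} is a relative projectivity coming from completion.
\begin{enumerate}
\item Enlarge to a common interval whose top lies strictly above both supports, so that no \(\widehat R^{\mathrm{top}}\) row contributes.
\item By strict outer vanishing, each term \(C^s=\mathcal S(N)^s\) is then represented in \(D(R)\) by its outer row. This row is a bounded complex of finite sums of \(\widehat R(s)\).
\item Each term \(D^v=\mathcal S(N')^v\) is complete, being represented by such a complex of complete modules.
\item The completion adjunction gives \(R\Hom_R(\widehat R(s),D^v)\simeq(D^v)^{-s}\), where \(-s\) is the \(R\)-module degree. This vanishes for \(v<s\) by the two-diagonal support. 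By d\'evissage, \(R\Hom_R(C^s,D^v)=0\) for \(v<s\).
\item Feed this into the hyper-Ext spectral sequence \(E_1^{p,q}=\bigoplus_s\operatorname{Ext}^q_R(C^s,D^{s+p})\Rightarrow\Hom_{D(R)}(C,D[p+q])\). Every term with \(p<0\) or \(q<0\) vanishes.
\item Hence total degree zero is \(E_2^{0,0}\), the group of chain maps, with no quotient by homotopies and no higher differentials.
\item The case \(v=s-1\) gives \(\Hom_R(C^s,D^{s-1})=0\), so every degree \(-1\) map vanishes, and with it every homotopy.
\end{enumerate}
This one lemma yields both the unique-chain-map statement and the zero-homotopy statement, and full faithfulness follows from it.
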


For interval independence, the old completed generators need not be
projective at the larger level. We construct flat resolutions for
the two elementary enlargements and compare their images after
derived reduction by \(R_1\). Completeness recovers the comparison
before totalization.

The last section applies the constructions to three cyclic
presentations of \(\HH^1_{\cris}(A/W)\) for a supergeneral
abelian threefold. Starting with the integral Frobenius on its
exterior square, we compute a quotient gauge and the corresponding
domino extension. The three presentations yield different lists of
elementary domino factors, although their Newton slopes and
\(a\)-numbers agree. These are conditional computations for
abelian varieties with the stated crystalline modules; no
realization or classification of such varieties is asserted here.

We take the diagonal \(t\)-structure, Ekedahl's equivalence, and
the established theory of dominos as background. Only the conventions
and properties needed for the proofs are recalled. In particular,
the paper does not repeat the construction of radical filtrations
or the general classification of diagonal dominos. The main
arguments concern finite resolutions, completion, and comparison
of actual representatives. All coherent derived categories in our
main results are bounded; we make no assertion identifying coherence
of unbounded \(R\)-complexes with finiteness of their derived
\(R_1\)-reduction.

\section{Raynaud modules and diagonal cohomology}\label{paper:background}

We fix a perfect field \(k\) of characteristic \(p>0\), put
\(W=W(k)\) and \(K=W[1/p]\), and write \(\sigma\) for Witt
Frobenius. All modules are left modules unless a side is specified.
We use cohomological indexing for complexes. The twist \(\sigma_*L\)
is restriction of scalars along \(\sigma\): its scalar action is
\(a\cdot x=\sigma(a)x\).

\subsection{Gradings, reduction, and completion}

The Raynaud ring is the graded ring \(R=R^0\oplus R^1\) generated
over \(W\) by \(F,V\) in degree zero and \(d\) in degree one, with
\[
 Fa=\sigma(a)F,\quad Va=\sigma^{-1}(a)V,\quad FV=VF=p,
 \qquad da=ad,\quad d^2=0,\quad FdV=d.
\]
An \(R\)-module is graded. In an \(R\)-complex \(M^{u,v}\), the
first index is the \(R\)-module grading and the second is the complex
degree: \(d\) raises \(u\), whereas the \(R\)-linear differential
\(\delta\) raises \(v\). The two differentials commute, and
\(\Tot(M)\) has differential \(d+(-1)^u\delta\).
Our shifts satisfy \(M(a)[b]^{u,v}=M^{u+a,v+b}\), with the usual
signs on the shifted differentials. We write \(\HH^v\) for ordinary
cohomology and \(\tH^v\) for diagonal cohomology.

For \(a\geqslant1\), the graded right \(R\)-module
\(R_a=R/(V^aR+dV^aR)\) computes reduction by the standard
filtration \(\Fil^aM^u=V^aM^u+dV^aM^{u-1}\).
Thus \(R_a\otimes_R M=M/\Fil^aM\). Its derived version retains
both gradings and the left action of \(W/p^a[d]/(d^2)\).
In particular, the following free resolution is understood in graded
right \(R\)-modules, with the matrix entries acting on the left:
\begin{equation}\label{paper:R1-resolution}
 0\longrightarrow R(-1)
 \xrightarrow{\binom{F}{-Fd}\cdot}R(-1)\oplus R
 \xrightarrow{(dV,V)\cdot}R\longrightarrow R_1\longrightarrow0.
\end{equation}
Ekedahl completion is
\begin{equation}\label{paper:ekedahl-completion}
 \widehat M=R\varprojlim_a(R_a\otimes_R^{\rmL}M).
\end{equation}
The operators \(W,d,V\) act levelwise, and \(F\) acts from level
\(a+1\) to level \(a\); together they give the limit its
\(R\)-action. Completeness means that \(M\to\widehat M\) is an
isomorphism. This completion must be distinguished from the derived
\(p\)-completion of gauges used below.

Fix a finite interval \(\I=[m,n]\subset\Z\). An \(R\)-module
has \emph{level \(\I\)} if it vanishes outside \(\I\) and \(F\)
is bijective in degree \(n\). Its top component is therefore a
module over \(R^{\mathrm{top}}=W_\sigma[F,F^{-1}]\), with
\(V=pF^{-1}\). We write \(R\td\mathrm{Mod}\td\I\) for this
abelian category and \(D(R\td\I)\) for its derived category.
The fixed-level realization in \(D(R)\) is the one of
\cite[Chapter~0, Section~5]{Ek3}.

We use coherence in the sense of \cite{IR,Ek3}: a coherent graded
\(R\)-module admits a finite filtration with factors given by grading
shifts of finite \(W\)-module Dieudonn\'e modules with topologically
nilpotent \(V\), and elementary dominos \(U_t\), \(t\in\Z\).
The latter have nonzero components only in degrees zero and one,
with \(U_t^0=k[[V]]\) and
\(U_t^1=\prod_{a\geqslant t}k\,dV^a\); the differential sends
\(V^a\) to \(dV^a\), interpreted as zero when \(a<t\).
Here \(F=0\) in degree zero, \(V=0\) in degree one, and \(F\)
lowers the exponent in degree one. The notation \(dV^{-a}=F^ad\)
is formal and does not invert \(V\) on the module.
We denote by \(D_c^b(R)\) the bounded complexes with coherent
cohomology, and use the analogous fixed-level notation.

We will use two completion facts from
\cite[Chapter~0, Sections~3--4; Chapter~I, Section~1]{Ek3}.
Bounded coherent \(R\)-complexes are complete. Moreover, derived
reduction by \(R_1\) detects vanishing for complete complexes of
finite level:
\begin{equation}\label{R1 detects vanishing of complete complexes}
 M\in D(R\td\I),\quad \widehat M\simeq M,\quad
 R_1\otimes_R^{\rmL}M=0\quad\Longrightarrow\quad M=0.
\end{equation}
Indeed, the Cartier sequences propagate vanishing from \(R_1\) to
every \(R_a\), after which \eqref{paper:ekedahl-completion} applies.
Finite level supplies the required bound in the \(R\)-module grading.
We also use the completion adjunction: if \(Y\) is complete, then
maps to \(Y\) are unchanged by completing the source. In particular,
\begin{equation}\label{paper:completion-adjunction}
 R\Hom_R(\widehat R(s),Y)\simeq R\Hom_R(R(s),Y)\simeq Y^{-s}.
\end{equation}
Here Hom preserves the \(R\)-module grading, and \(Y^{-s}\) on the
right denotes evaluation in that grading, not a complex shift;
see \cite[Chapter~0, Section~4]{Ek3}.

\subsection{The diagonal heart}

For a coherent graded \(R\)-module \(L\), put
\[
 F^\infty BL^{u+1}=\bigcup_{a\geqslant0}F^a(dL^u),
 \qquad
 V^{-\infty}ZL^u=\bigcap_{a\geqslant0}\ker(dV^a:L^u\to L^{u+1}).
\]
The filtration ending in \(L^u\to F^\infty BL^{u+1}\) induces
Ekedahl's bounded diagonal \(t\)-structure on \(D_c^b(R)\)
\cite[Chapter~0, Proposition~1.4; Chapter~I, Section~1]{Ek3}.
We denote its heart by \(\dc\). The description needed here is
\begin{equation}\label{paper:diagonal-heart}
 N\in\dc\quad\Longleftrightarrow\quad
 \begin{cases}
 \HH^v(N)^u=0&\text{if }u\notin\{-v,1-v\},\\
 F^\infty B\HH^v(N)^{1-v}=\HH^v(N)^{1-v}&\text{for every }v.
 \end{cases}
\end{equation}
In other words, each ordinary cohomology module lies on two adjacent
\(R\)-module degrees and is generated by its lower component.
This final condition is stronger than the support condition alone.
The diagonal and ordinary truncations commute; diagonal truncation
also preserves finite level. We write
\(\dc_\I=\dc\cap D_c^b(R\td\I)\).

We recall these results, rather than reproduce the construction of
the diagonal \(t\)-structure or the classification of diagonal
dominos. The distinction relevant to this paper is between an object
of \(\dc\), initially specified in a derived category, and an actual
\(R\)-complex representing it. Our main construction makes this
choice functorial and independent of the level.

\section{Finite-level gauges and their projective resolutions}
\label{sec:paper-gauges}

We keep the interval in the notation when comparing levels and suppress
it otherwise. The purpose of this section is twofold: to specify the
inverse construction with its scalar conventions, and to give finite
resolutions on which the completed construction can be calculated.

\subsection{Gauges and the total gauge functor}

\begin{definition}[{\cite[Definition~II.2.1]{Ek3}}]
\label{definition of F-gauge structure}
An \(F\)-gauge is a graded \(W\)-module \(M\), with \(W\)-linear
maps \(\tF:M^i\to M^{i+1}\) and \(\tV:M^{i+1}\to M^i\)
satisfying \(\tF\tV=\tV\tF=p\), and a \(\sigma\)-linear
isomorphism \(\tau:M^\infty\xrightarrow{\sim}M^{-\infty}\).
Here the two limits use \(\tF\) and \(\tV\), respectively.
It has level \(\I=[m,n]\) if \(\tF:M^i\to M^{i+1}\) is
invertible for \(i\geq n\) and \(\tV:M^i\to M^{i-1}\) is
invertible for \(i\leq m\).
We denote the resulting abelian category by \(\FGauge\td\I\).
An \(F\)-gauge is coherent if every \(M^i\), \(i\in\I\), is a
finite \(W\)-module.
\end{definition}

Thus a gauge of level \(\I\) is determined by its terms on \(\I\),
the maps between consecutive terms, and the endpoint identification
\(\tau:M^n\xrightarrow{\sim}M^m\). The tails are recovered from
the level condition. We write \(D_c^b(\FGauge\td\I)\) for the
bounded complexes with coherent cohomology.

\begin{definition}[{\cite[Chapter~II, Section~2(v)]{Ek3}}]
\label{definition of G_r}
For \(r\in\I\), let \(G_r\) represent evaluation in degree \(r\):
\[
\Hom_{\FGauge\td\I}(G_r,M)=M^r.
\]
Its universal element is denoted by \(\alpha_r\in G_r^r\).
Equivalently, \(G_r\) is generated by \(\alpha_r\) under the gauge
operators, subject only to the gauge relations. Each \(G_r\) is
projective, and the finite family \((G_r)_{r\in\I}\) generates the
category.
\end{definition}

For later use, the normal form consists of the two strings of paths
starting at \(r\), one using \(\tF\) and \(\tau\), and the other
using \(\tV\) and \(\tau^{-1}\). A reversal contributes a factor
of \(p\). This gives a \(W\)-basis in every gauge degree.

\begin{definition}[{\cite[Definition~II.3.1]{Ek3}}]
\label{definition of FG}
For a graded left \(R\)-module \(L\), define the complex of gauges
\(\FG(L)\) by
\[
\FG(L)^{i,j}=\begin{cases}L^j,&i\leq j,\\
\sigma_*L^j,&i>j.\end{cases}
\qquad
(\tF,\tV)=\begin{cases}
(p,1),&i<j,\\ (F,V),&i=j,\\ (1,p),&i>j,
\end{cases}
\]
where the second formula describes the edge from gauge degree \(i\)
to \(i+1\) in complex degree \(j\). In gauge degree \(i\), its
complex is
\[
\cdots\longrightarrow\sigma_*L^{i-2}\xrightarrow{d}
\sigma_*L^{i-1}\xrightarrow{dV}L^i\xrightarrow{d}L^{i+1}
\longrightarrow\cdots.
\]
The endpoint map \(\tau\) is the identity on underlying groups,
viewed as the semilinear identification
\(\sigma_*\Tot(L)\to\Tot(L)\).
\end{definition}

The scalar twist makes the displayed gauge maps linear. The Raynaud
relations make them commute with the complex differential. Applying
\(\FG\) termwise and totalizing gives
\[
\FG^{\Tot}:D(R\td\mathrm{Mod}\td\I)
\longrightarrow D(\FGauge\td\I).
\]
The \(R\)-module grading lies in the finite interval \(\I\), so this
totalization uses finite sums and preserves quasi-isomorphisms. Our
\(\FG\) and \(\FG^{\Tot}\) are Ekedahl's \(\mathrm S\) and
\(\underline{\underline{\mathrm S}}\), respectively.

\subsection{The inverse construction and its tensor description}

We give the presentation of \(\rmt\) because its scalar twists and
endpoint relation enter the calculations below. It is the construction
of \cite[Definition~II.4.1]{Ek3}, with the conventions of
\Cref{definition of FG} made explicit.

\begin{definition}\label{rmt}
For \(M\in\FGauge\td\I\), the complex \(\rmt(M)\) is generated
by symbols \(\beta_i\otimes x\), for \(i\in\I\) and
\(x\in M^j\), \(j\in\Z\), of \(R\)-module degree \(i\) and
complex degree \(-i\). They are additive in \(x\), and satisfy
\[
\beta_i\otimes\lambda x=
\begin{cases}\lambda(\beta_i\otimes x),&j\leq i,\\
\sigma(\lambda)(\beta_i\otimes x),&j>i,
\end{cases}\qquad \lambda\in W.
\]
The differential is determined by \(\delta(\beta_m\otimes x)=0\)
and, for \(i>m\), by
\[
\delta(\beta_i\otimes x)=(-1)^{i+1}
\begin{cases}
dV(\beta_{i-1}\otimes x),&j=i,\\
d(\beta_{i-1}\otimes x),&j\ne i.
\end{cases}
\]
The gauge relations are
\[
\beta_i\otimes\tF x=
\begin{cases}
p(\beta_i\otimes x),&j<i,\\
F(\beta_i\otimes x),&j=i,\\
\beta_i\otimes x,&j>i,
\end{cases}
\qquad
\beta_i\otimes\tV x=
\begin{cases}
\beta_i\otimes x,&j\leq i,\\
V(\beta_i\otimes x),&j=i+1,\\
p(\beta_i\otimes x),&j>i+1.
\end{cases}
\]
Finally, \(\beta_i\otimes x_\infty=
\beta_i\otimes\tau(x_\infty)\), using the stable tail symbols.
In complex degree \(-n\) there is no \(R\)-module degree
\(n+1\) component.
\end{definition}

The top component carries an invertible \(F\). Indeed, at gauge
degree \(n\), the positive-tail relation identifies \(F\) with
passage through \(\tau\), whose inverse gives \(F^{-1}\).
In \(R\)-module degree \(n\) and complex degree \(-n+1\),
invertibility follows instead from that of left multiplication by
\(F\) on \(R^1\). Its inverse commutes with the right
\(R^0\)-action and therefore respects the relations. Thus
\(\rmt(M)\) is a complex of \(R\)-modules of level \(\I\).

\begin{proposition}\label{adjunction between FG and t}
There is a natural adjunction
\[
\Hom_{\Ch(R\td\mathrm{Mod}\td\I)}(\rmt(M),N)
\cong\Hom_{\FGauge\td\I}(M,Z^0\FG^{\Tot}(N)).
\]
The unit sends \(x\) to \(\sum_{i\in\I}\beta_i\otimes x\).
\end{proposition}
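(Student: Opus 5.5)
The plan is to prove the adjunction directly from the generators-and-relations presentation of \(\rmt(M)\) in \Cref{rmt}. A chain map \(\rmt(M)\to N\) of \(R\)-complexes is determined by the images of the generators \(\beta_i\otimes x\). Such a map is therefore the same as a family of additive maps \(\phi_{i,j}:M^j\to N^{i,-i}\), for \(i\in\I\) and \(j\in\Z\), that satisfy the scalar, gauge, endpoint and differential relations. On the other side, a gauge map \(M\to Z^0\FG^{\Tot}(N)\) consists, in each gauge degree \(j\), of a map from \(M^j\) to the degree-zero cocycles of the total complex of \(\FG(N)\) in gauge degree \(j\). Since \(\FG(N)^{j,v}\) is \(N^v\) or \(\sigma_*N^v\) according to whether \(j\le v\) or \(j>v\), a degree-zero element of the total complex is a tuple \((y_i)_{i\in\I}\) with \(y_i\in N^{i,-i}\). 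The \(R\)-grading lies in \(\I\), so these sums are finite. I would match \(\phi_{i,j}(x)\) with the \(i\)-th component \(y_i\) of the image of \(x\in M^j\).

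The key steps, in order, are as follows.

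First, the scalar relation of \(\rmt\) corresponds to linearity for the twisted structure \(\FG(N)^{j,i}=\sigma_*N^{i}\) when \(j>i\). This requires \(j>i\) to match the case split in \Cref{rmt}; if the indices are reversed, one reindexes. In any case the twist appears exactly where \(\sigma_*\) appears.

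Second, the relations for \(\beta_i\otimes\tF x\) and \(\beta_i\otimes\tV x\) are precisely the statement that the family commutes with \(\tF,\tV\). Here one uses the table \((p,1)\), \((F,V)\), \((1,p)\) of \Cref{definition of FG}, applied on the edge between gauge degrees \(j\) and \(j+1\) in complex degree \(i\). This is checked case by case: \(j<i\), \(j=i\), \(j>i\), and for \(\tV\) the cases \(j\le i\), \(j=i+1\), \(j>i+1\).

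Third, the endpoint relation \(\beta_i\otimes x_\infty=\beta_i\otimes\tau(x_\infty)\) matches compatibility with \(\tau\), which on \(\FG\) is the identity of underlying groups.

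Fourth, the differential condition. A chain map must satisfy \(\phi(\delta(\beta_i\otimes x))=\delta_N\phi(\beta_i\otimes x)\). Because \(\rmt(M)\) is concentrated in the "diagonal" bidegrees \((i,-i)\), the \(\rmt\)-differential lands in \(R\cdot\beta_{i-1}\), and this condition should translate into the vanishing of the total differential \(d+(-1)^u\delta\) on the tuple \((y_i)\). The \(d\) component, or \(dV\) at the slot \(j=i\), comes from the gauge-degree-\(j\) complex displayed in \Cref{definition of FG}. The \(\delta_N\) component comes from the internal differential.

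Finally, naturality is clear from the construction. Evaluating the correspondence on the identity when \(N=\rmt(M)\) gives the unit \(x\mapsto\sum_i\beta_i\otimes x\).

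I expect the fourth step to be the main obstacle. It is sign and index bookkeeping: one must check that the prescribed \(\delta(\beta_i\otimes x)=(-1)^{i+1}dV(\beta_{i-1}\otimes x)\) or \((-1)^{i+1}d(\beta_{i-1}\otimes x)\), with the convention \(\delta(\beta_m\otimes x)=0\), exactly encodes the cocycle condition. This must hold including at the boundary \(i=m\), and in the top degree \(n\), where there is no component in degree \(n+1\) and \(F\) is invertible. I would first write the cocycle condition for \((y_i)\) in gauge degree \(j\) component by component, since each component lands in a single \(N^{i,1-i}\). Then I would compare it with the chain-map identity for \(\beta_i\otimes x\), fixing the \(dV\) versus \(d\) choice by whether the edge \(\sigma_*N^{j-1}\to N^j\) is crossed, that is, whether \(i=j\). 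A further check is needed at the endpoint: the top-degree relation defining \(F^{-1}\) must be compatible with the level-\(\I\) structure of \(N\), so that no extra condition appears there.
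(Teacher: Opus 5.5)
Your proposal is correct and is essentially the paper's argument. A degree-zero total cycle in gauge degree \(j\) is a family \(y_i\in N^{i,-i}\), twisted for \(i<j\); the scalar, gauge and endpoint relations of \(\rmt(M)\) match the gauge-morphism conditions; and compatibility with \(\delta\) matches the cycle condition, with wall differential \(dV\) at \(i=j\) and \(d\) elsewhere. The sign bookkeeping you flag works out: \(\delta(y_i)=(-1)^{i+1}d'(y_{i-1})\) is exactly the vanishing of the \(N^{i,1-i}\)-component \(d'(y_{i-1})+(-1)^i\delta(y_i)\) of the total differential, and the top degree imposes nothing extra because \(N^{n+1}=0\) and maps of level-\(\I\) modules automatically commute with \(F^{-1}\).
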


\begin{proof}
A degree-zero total cycle in gauge degree \(j\) is a family
\(x_i\in N^{i,-i}\), with the \(\sigma_*\)-twist for \(i<j\).
The assignment \(\beta_i\otimes x\mapsto x_i\) respects the scalar
and gauge relations precisely when the corresponding map is a gauge
morphism. It respects \(\delta\) precisely when the family is a
total cycle: the wall differential is \(dV\) at \(i=j\), and
\(d\) elsewhere. These constructions are inverse and natural.
\end{proof}

Set
\[
A_\I=\End_{\FGauge\td\I}
\bigl(\oplus_{r\in\I}G_r\bigr)^{\mathrm{op}}.
\]
The representing property makes \(\oplus_rG_r\) a small projective
generator. Morita theory \cite[Theorem~2.5]{Schwede} identifies
\(\FGauge\td\I\) with left \(A_\I\)-modules by
\(M\mapsto\oplus_rM^r\). If \(e_r\) is the corresponding
idempotent, then \(G_r=A_\I e_r\) and
\(e_jA_\I e_r=G_r^j\). The opposite ring is essential: an element
of \(G_r^j\) represents a morphism \(G_j\to G_r\), and multiplication
reverses composition of these representing maps. In terms of gauge
operators, it is ordinary composition in the written order.

\begin{proposition}\label{rmt-tensor-product}
Let \(\mathcal T_\I=\rmt(A_\I)\). Right multiplication on
\(A_\I\) gives \(\mathcal T_\I\) the structure of a complex of
graded left \(R\)- and right \(A_\I\)-modules. There is a natural
isomorphism of actual \(R\)-complexes
\[
\theta_M:\mathcal T_\I\otimes_{A_\I}M
\xrightarrow{\sim}\rmt(M).
\]
The tensor product is taken in each complex degree.
\end{proposition}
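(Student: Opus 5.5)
The plan is to prove that \(\rmt\) is additive and right exact in each complex degree, then use the Eilenberg--Watts argument on the projective generator \(\oplus_rG_r=A_\I\). First, the bimodule structure. For \(a\in A_\I\), right multiplication \(\rho_a:A_\I\to A_\I\), \(x\mapsto xa\), is an endomorphism of the left \(A_\I\)-module \(A_\I\), hence a gauge endomorphism under the Morita identification. By functoriality, \(\rmt(\rho_a)\) is an endomorphism of the \(R\)-complex \(\mathcal T_\I\). This gives a right action, because \(\rho_{ab}=\rho_b\rho_a\). It commutes with the left \(R\)-action and with \(\delta\), since \(\rmt(\rho_a)\) is a morphism of \(R\)-complexes. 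Functoriality of \(\rmt\) itself is read off from \Cref{rmt}: a gauge map \(f\) sends \(\beta_i\otimes x\mapsto\beta_i\otimes f(x)\), and this respects all the relations.

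Next, define \(\theta_M\). For \(x\in M\) let \(\phi_x:A_\I\to M\), \(a\mapsto ax\), be the \(A_\I\)-linear map, and set \(\theta_M(c\otimes x)=\rmt(\phi_x)(c)\). On generators this reads \(\beta_i\otimes a\otimes x\mapsto\beta_i\otimes ax\). Three checks are needed:
\begin{itemize}
\item \(\theta_M\) is balanced, since \(\phi_x\circ\rho_a=\phi_{ax}\);
\item it is \(R\)-linear and commutes with \(\delta\), because each \(\rmt(\phi_x)\) does;
\item it is natural in \(M\), since \(f\circ\phi_x=\phi_{f(x)}\).
\end{itemize}
One subtle point concerns the scalar twist \(\beta_i\otimes\lambda x=\sigma(\lambda)(\beta_i\otimes x)\) for \(j>i\). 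It only involves the \(W\)-structure of the gauge, and \(W\) acts through the central-ish idempotent pieces \(e_jA_\I e_j\). Consequently the twist is carried compatibly through \(\phi_x\), and no extra check is needed beyond functoriality.

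For the isomorphism, first treat \(M=A_\I\): \(\theta_{A_\I}\) is the canonical map \(\mathcal T_\I\otimes_{A_\I}A_\I\to\mathcal T_\I\), which is bijective. Both sides are additive in \(M\), so \(\theta\) is an isomorphism on finite free modules. It is also an isomorphism on arbitrary direct sums. For \(\rmt\) this holds because it is defined by generators and relations, linear in \(x\), so it commutes with direct sums.

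The main step is right exactness of \(\rmt\) in each complex degree. The intended route is the adjunction of \Cref{adjunction between FG and t}: \(\rmt\) is a left adjoint, so it preserves colimits, in particular cokernels, computed in \(\Ch(R\td\mathrm{Mod}\td\I)\). Cokernels of complexes are computed termwise, so each complex degree of \(\rmt\) is right exact. If the adjunction's target category caused trouble, the fallback is a direct argument: the presentation by generators and relations is linear in \(x\), so a surjection \(M'\to M\) with kernel \(K\) yields \(\rmt(M)=\rmt(M')/\operatorname{im}\rmt(K)\) degreewise.

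With right exactness in hand, choose a free presentation \(F_1\to F_0\to M\to0\) of \(A_\I\)-modules. Both \(\mathcal T_\I\otimes_{A_\I}-\) and \(\rmt\) are right exact in each complex degree, and \(\theta\) is natural and an isomorphism on \(F_0,F_1\). The five lemma then shows that \(\theta_M\) is an isomorphism in every complex degree. Since \(\theta_M\) is already a chain map, it is an isomorphism of actual \(R\)-complexes.
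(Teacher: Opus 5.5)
Your proposal is correct and takes essentially the same route as the paper: both \(\mathcal T_\I\otimes_{A_\I}(-)\) and \(\rmt\) preserve direct sums and cokernels, they agree on the free module \(A_\I\), and a free presentation then gives the isomorphism. Your formula \(\beta_i\otimes a\otimes x\mapsto\beta_i\otimes ax\) is the paper's explicit map, and your use of the adjunction in \Cref{adjunction between FG and t} for right exactness and of functoriality to handle the scalar twist only spells out what the paper's short proof leaves implicit (the ``central-ish'' remark is unnecessary but harmless).
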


\begin{proof}
Both functors preserve direct sums and cokernels, and they agree on
the free module \(A_\I\); a free presentation proves the assertion.
Explicitly, for \(x\in e_jM\), the isomorphism sends
\((\beta_i\otimes\alpha_j)\otimes x\) to \(\beta_i\otimes x\).
Tensor balancing recovers the relations in \Cref{rmt}, including
the scalar twist. The differential is \(\delta\otimes1\).
\end{proof}

Consequently the derived functor retains an actual \(R\)-complex in
each outer resolution degree:
\[
\rmL\rmt:D^-(\FGauge\td\I)
\longrightarrow D^-(\Ch(R\td\mathrm{Mod}\td\I)),
\qquad
\rmL\rmt(M)=\mathcal T_\I\otimes_{A_\I}^{\rmL}M.
\]
Totalization in the bounded inner \(R\)-complex direction gives
\((\rmL\rmt)^{\Tot}\). This is the derived left adjoint of
\(\FG^{\Tot}\); compare \cite[Chapter~II, Section~4]{Ek3}.

\begin{proposition}[{\cite[Lemma~II.4.2 and Theorem~II.4.5]{Ek3}}]
\label{description of t(G_r)}
Write \(b_i=\beta_i\otimes\alpha_r\). In complex degree \(-i\),
\(\rmt(G_r)\) is the free graded module \(Rb_i\) for \(m\leq i<n\),
with \(b_i\) in \(R\)-module degree \(i\); its top term is
\(R^{\mathrm{top}}b_n\). Its differential is
\[
\delta(b_i)=(-1)^{i+1}
\begin{cases}dVb_{i-1},&i=r,\\db_{i-1},&i\ne r,\end{cases}
\quad(i>m),\qquad \delta(b_m)=0.
\]
The adjunction unit \(G_r\to\FG^{\Tot}(\rmt(G_r))\) is a
quasi-isomorphism.
\end{proposition}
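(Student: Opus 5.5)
The plan has two parts: first compute \(\rmt(G_r)\) directly from \Cref{rmt}, using the normal form of \(G_r\), and then check the unit degree by degree.

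\emph{Computing \(\rmt(G_r)\).} By \Cref{rmt-tensor-product}, \(\rmt(G_r)=\mathcal T_\I\otimes_{A_\I}A_\I e_r\). Equivalently, \(\rmt(G_r)\) is generated by symbols \(\beta_i\otimes y\) with \(y\) running over the normal-form \(W\)-basis of \(G_r\). The gauge relations let us push every such symbol down to \(\beta_i\otimes\alpha_r\). For a path \(\tF^a\alpha_r\) or \(\tV^a\alpha_r\) that crosses the wall at \(i\), the relations produce \(F\) or \(V\) acting on \(b_i\). Paths staying on one side of the wall contribute only powers of \(p\) or the identity. Paths through \(\tau\) are handled by the endpoint relation.

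So in complex degree \(-i\) every element is an \(R\)-multiple of \(b_i\), and we get a surjection \(Rb_i\to\rmt(G_r)^{-i}\). To see it is injective, I would build an inverse via the adjunction of \Cref{adjunction between FG and t}. Concretely, I would map \(G_r\) to \(Z^0\FG^{\Tot}\) of the candidate complex \(C\), where \(C^{-i}=Rb_i\) for \(i<n\) and \(C^{-n}=R^{\mathrm{top}}b_n\), with the stated differential. By the representing property this amounts to choosing a degree-zero total cycle in gauge degree \(r\), namely \((b_i)_i\). The resulting map \(\rmt(G_r)\to C\) is inverse to the surjection. The top term becomes \(R^{\mathrm{top}}\) because of the remark after \Cref{rmt}: \(F\) is invertible there through \(\tau\). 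The differential formula is then read off from \Cref{rmt}, taking \(j=r\).

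\emph{The unit.} Next I would compute \(\FG^{\Tot}(\rmt(G_r))\) in each gauge degree \(s\) as a totalized double complex. Its terms are \(R^{u}b_i\), placed in total degree \(u-i\), and the wall differential is \(dV\) at \(u=s\). Since \(R^u\) is nonzero only for \(u\in\{0,1\}\) relative to \(b_i\), this is a finite complex of \(W\)-modules, each column looking like \(Wb_i\to\) (\(W\)-span of the \(F^ad b_i\) and \(dV^ab_i\)).

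The key step is to show that its cohomology is concentrated in total degree \(0\) and equals \(G_r^s\), with basis the normal-form strings. The degree-zero cycles are the families \(x_i\in R^{i-?}b_i\) matching those strings under the unit \(x\mapsto\sum_i\beta_i\otimes x\). For acyclicity elsewhere, I would filter by \(i\) and use the exactness of \(R^0\xrightarrow{d}R^1\xrightarrow{d}\) modulo the image of \(dV\) across the wall. This reduces to exactness of small explicit sequences such as \(W[F,V]\xrightarrow{d}R^1\) together with \(FdV=d\).

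I expect the main obstacle to be this explicit acyclicity check, especially keeping track of the \(\sigma\)-twists and of the wall at \(i=r\) versus \(i=s\). Since the statement is cited from Ekedahl (Lemma~II.4.2 and Theorem~II.4.5), I would ultimately defer the detailed bookkeeping to that reference.
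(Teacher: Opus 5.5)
Your proposal is correct and follows the paper's route. The terms and differential of \(\rmt(G_r)\) are read off by evaluating gauge paths in the presentation of \(\rmt\); your inverse \(\rmt(G_r)\to C\), built from the total cycle \((b_i)_i\) through the adjunction, makes explicit the freeness that the paper leaves to this path evaluation and to Ekedahl. As in the paper, the quasi-isomorphism of the unit is taken from Ekedahl rather than recomputed.

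If you keep your sketch of the unit, correct its bookkeeping (the \(R^{i-?}\) and "total degree \(u-i\)"). In each gauge degree \(s\) the total complex has only two terms: \(\bigoplus_{i<n}R^0b_i\oplus R^{\mathrm{top}}b_n\) in total degree \(0\), and \(\bigoplus_{i<n}R^1b_i\) in total degree \(1\). The claim therefore amounts to two facts: the total differential is surjective, and \(x\mapsto\sum_i\beta_i\otimes x\) identifies \(G_r^s\) with its kernel.
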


The formula for the terms follows directly by evaluating gauge paths
in \Cref{rmt}: opposite arrows become \((p,1)\), \((F,V)\), or
\((1,p)\). At the top, the endpoint relation gives \(F^{-1}\).
This also explains why completion of the top coefficient ring must
be treated separately. We use Ekedahl's calculation of the unit and
his bounded coherent equivalence, rather than repeat that calculation.

\subsection{Finite projective resolutions}

\begin{theorem}\label{resolution of coherent complexes}
Every \(M\in D_c^b(\FGauge\td\I)\) is represented by a bounded
complex whose terms are finite direct sums of \(G_r\), \(r\in\I\).
\end{theorem}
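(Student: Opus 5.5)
The plan is to reduce the statement to a finite global dimension result for coherent gauges. By Morita theory, \(\FGauge\td\I\) is the category of left \(A_\I\)-modules, and the \(G_r=A_\I e_r\) are the indecomposable finitely generated projectives coming from idempotents. \(A_\I\) is a finite \(W\)-algebra: each \(e_jA_\I e_r=G_r^j\) is a finite free \(W\)-module, by the normal form basis. Hence \(A_\I\) is left noetherian, coherent gauges are exactly the finitely generated \(A_\I\)-modules, and every coherent gauge has a resolution by finite sums of the \(G_r\).

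Given \(M\in D_c^b(\FGauge\td\I)\), I will replace \(M\) by a bounded above complex of finite projectives in the standard way. This can be done either by a Cartan--Eilenberg argument or by inductively killing cohomology from the top degree down, using noetherianity at each step. What remains is a \emph{uniform} bound: every finitely generated \(A_\I\)-module should have projective dimension at most some \(N\). Once that bound is known, the usual truncation trick finishes the argument. Far below the lowest cohomological degree, the kernel of the differential is projective, because it is a high syzygy. It is also finitely generated and therefore a finite sum of the \(G_r\); finitely generated projectives are summands of sums of the \(G_r\), and I will absorb complements by an Eilenberg swindle-free adjustment that adds \(P\xrightarrow{1}P\) in two adjacent degrees. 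Truncating at that kernel then gives a bounded complex. Since this step only uses a finite direct sum, I will state the required result as "finite sums of \(G_r\) up to adding contractible pieces".

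The main obstacle is the finite projective dimension, and I would follow the rank reduction outlined in the introduction. Every coherent gauge \(M\) sits in an extension whose sub and quotient pieces are of two types: a torsion-free part, recording the isocrystal \(M^{-\infty}\otimes K\) together with its lattices, and a \(W\)-finite-length part. Extensions preserve bounded projective dimension. For the torsion-free part I will induct on the rank of the isocrystal. A cyclic subisocrystal, generated by a vector \(x\in M^j\), gives a subgauge \(L\) with a presentation \(0\to G_j\to G_l\to L\to0\). To prove this I pass to Laurent coordinates, writing the isocrystal as \(K_\sigma[F,F^{-1}]\)-cyclic, and show that the kernel of \(G_l\to L\) is the intersection of two valuation lattices, one coming from the \(\tF\)-string and one from the \(\tV\)-string. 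Such an intersection is itself generated by a single element in the appropriate degree \(j\), so it equals \(G_j\). Consequently \(L\) has projective dimension at most \(1\), and the quotient \(M/L\) has smaller rank once its torsion is split off.

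For the finite-length gauges I will dévissage to simple gauges, which are one-dimensional over \(k\) in a single degree \(r\). Using the normal-form basis of \(G_r\), I will write down an explicit resolution of length \(2\) or \(3\): first the map \(G_r\to S_r\), then a kernel generated by \(\tF\alpha_r\) and \(\tV\alpha_r\), possibly also by \(p\alpha_r\) at the endpoints where \(\tau\) intervenes. This is the step I expect to require the most case checking, particularly at the endpoints \(m,n\), where \(\tau\) identifies the two strings. Combining the two cases bounds the projective dimension by a constant depending only on \(\I\), and this completes the proof.
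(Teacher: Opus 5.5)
There are two genuine gaps. First, your framework rests on a false premise: \(A_\I\) is not finite over \(W\). The normal form of \(G_r^j=e_jA_\I e_r\) consists of two \emph{infinite} strings of paths; in the Laurent coordinates of the rank-lowering lemma, \(G_a^i\cong\Lambda_{i-a}\) has one copy of \(W\) for each \(q\in\Z\). So \(G_r\) is finitely generated but not coherent, and coherent gauges are a proper subclass of the finitely generated \(A_\I\)-modules. (\(A_\I\) is still Noetherian, but because it is Morita equivalent to an algebra finite over the cyclic ring generated by the two return maps, not because it is finite over \(W\).)

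Your syzygy truncation needs a bound on the projective dimension of \emph{all} finitely generated modules, and your d\'evissage cannot give that. Isocrystal rank and \(W\)-length only control coherent gauges. The low cycle modules of a resolution sit inside sums of \(G_r\)'s and are not coherent; already \(G_r/pG_r\) is a finitely generated torsion module of infinite length.

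The summand step does not close either. If \(Z\oplus Q\) is a finite sum of \(G_r\)'s, adding \(Q\xrightarrow{1}Q\) only moves the unknown projective \(Q\) to the adjacent degree. The procedure terminates only if \(Z\) is stably a finite sum of \(G_r\)'s, a \(K_0\) statement you do not address. ``Up to contractible pieces'' is weaker than the theorem.

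The paper avoids all of this. Objects represented by bounded complexes of finite sums of \(G_r\) form a triangulated subcategory: a morphism out of a bounded complex of projectives is a chain map, and its cone is again such a complex. It therefore suffices to place each coherent gauge in this subcategory, using extensions, and then to run the truncation triangles of \(M\). No global dimension or statement about arbitrary projectives is needed.

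Second, your finite-length step omits a class of simple gauges. Besides the vertex simples (\(k\) at one vertex, all arrows zero), there are simples on which a full return map \(\Phi=\tau\tF^{n-m}\), or \(\Psi=\tV^{n-m}\tau^{-1}\), is invertible. An example is \(k\) at every vertex of \([m,n]\), with the forward arrows \(m\to m+1\to\cdots\to n\) the identity, the backward arrows between them zero, and \(\tau=\sigma\). In general such simples correspond to simple \(k_\sigma[\Phi,\Phi^{-1}]\)-modules, which need not be one-dimensional when \(k\) is not algebraically closed. For \(m=n\), every simple is of this kind. For these, the kernel of \(G_m\to S\) is not generated by \(\tF\alpha_m\) and \(\tV\alpha_m\).

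You need two further ingredients. The first is the dichotomy: the images of the return maps are subgauges, so on a simple object each is zero or invertible. The second is a separate resolution: lift the cyclic relation to a monic skew polynomial \(P\) with unit constant term and use
\[
 0\longrightarrow G_m\xrightarrow{(\cdot P,\,-\cdot p)}G_m\oplus G_m
 \xrightarrow{(\cdot p,\,\cdot P)}G_m\longrightarrow S\longrightarrow0,
\]
and symmetrically with \(\Psi\). Exactness uses injectivity of \(p\) on \(G_m\) and of \(\cdot P\) on \(G_m/p\), both read off from the normal form.

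Finally, in the rank step, the claim that the intersection of the two valuation lattices is principal is the real content, and you only assert it. It requires the Newton-polygon bounds \(v_0(P)=0\) and \(v_n(P)=e\) for the cyclic relation, together with multiplicativity of \(v_0\) and \(v_n\). These give \(\Lambda_{i-l}\cap\mathcal BP=\Lambda_{i-l+e}P\), which right multiplication by \(\Phi^tP\) then identifies with \(G_j\).
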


The point is boundedness as well as finite generation. We prove the
theorem by reducing rank and then resolving the finite-length factors.

\begin{lemma}\label{lowering the rank}
A coherent gauge of positive rank contains a torsion-free subgauge
\(L\) of positive rank with a resolution
\[
0\longrightarrow G_j\longrightarrow G_l\longrightarrow L
\longrightarrow0
\]
for some \(j,l\in\I\).
\end{lemma}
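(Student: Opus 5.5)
The plan is to take $L$ to be the image of a single map $G_l\to M$ sending $\alpha_l$ to a well-chosen element $x\in M^l$, and then to show that the kernel of $G_l\to L$ is itself a copy of some $G_j$.

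\emph{Step 1: the torsion-free quotient.} Let $M$ be a coherent gauge of level $\I=[m,n]$ and positive rank, where rank means the $K$-dimension of the isocrystal $M^n[1/p]$. Its Frobenius is $\tau^{-1}$ composed with the $\tF$-chain, or equivalently $\tau$ followed by the $\tF$-string from $m$ to $n$. First divide out the $W$-torsion of $M$. Torsion is stable under $\tF$, $\tV$ and $\tau$, so $M/M_{\mathrm{tors}}$ is a torsion-free coherent gauge of the same rank. A torsion-free subgauge of it lifts to a subgauge of $M$ only up to torsion. So instead I will construct $L$ directly inside $M$: I choose $x\in M^l$ whose $W$-span meets $M_{\mathrm{tors}}$ trivially, and whose gauge-generated subgauge also meets $M_{\mathrm{tors}}$ trivially. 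The second condition reduces to the first on the finitely many degrees in $\I$ once I know the generated subgauge injects into its rationalization, which Step 3 gives. Concretely, I will take $x$ non-torsion and replace it by $p^Nx$ for $N$ large. This kills any torsion component and changes nothing rationally.

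\emph{Step 2: Laurent coordinates on $G_l$.} By the normal form recalled after \Cref{definition of G_r}, each $G_l^i$ has a $W$-basis of paths from $l$ to $i$: the $\tF/\tau$-string and the $\tV/\tau^{-1}$-string, with reversals contributing $p$. After inverting $p$, every degree $G_l^i[1/p]$ is identified with one twisted Laurent ring $K_\sigma[T,T^{-1}]$, where $T$ records one turn through $\tau$, that is, the Frobenius of the isocrystal. Under this identification, $G_l^i\subset K_\sigma[T,T^{-1}]$ is the lattice spanned by $p^{a(i,e)}T^e$ for explicit exponents $a(i,e)$. These exponents come from a valuation that is linear in $e$, with one slope for $e\ge0$ (the $\tF$-side) and another for $e<0$ (the $\tV$-side). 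The map $G_l\to M$, $\alpha_l\mapsto x$, rationalizes to the map of isocrystals $K_\sigma[T,T^{-1}]\to M^n[1/p]$ sending $T\mapsto$ Frobenius. Its kernel is a left ideal, and it is principal: the twisted Laurent ring is a left principal ideal domain because $k$ is perfect. So the kernel is generated by the minimal polynomial $P(T)$ of $x$. Its image is the cyclic subisocrystal generated by $x$, which has positive rank $\deg P$.

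\emph{Step 3: the kernel is $G_j$.} In each gauge degree $i$, the kernel of $G_l\to L$ is $G_l^i\cap K_\sigma[T,T^{-1}]P$, because $L^i$ is torsion-free and so injects into $L^i[1/p]$. This is an intersection of two valuation lattices: the lattice defining $G_l^i$, and the ideal generated by $P$ transported by right multiplication. I will normalize $P$ so that its Newton polygon is extremal: multiply by a unit $p^cT^e$ so that its coefficients lie in the right lattice and it is primitive there. I will then find a degree $j\in\I$ such that $P$, read as a path-combination, lies in $G_l^j$ and generates the intersection in every degree. Right multiplication by $P$ then gives $G_j\to G_l$. It is injective because the Laurent ring is a domain. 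Surjectivity onto the kernel is a degreewise comparison of the two valuation functions, using the Weierstrass-type division by $P$ in the completed Laurent lattices.

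I expect Step 3 to be the main obstacle. The two lattices have different slopes on the $\tF$- and $\tV$-sides, so a single generator $P$ placed in a single degree $j$ has to work simultaneously for all $i\in\I$ and at both tails. My first attempt would be to choose $j$ by comparing the $p$-adic valuations of the leading and trailing coefficients of $P$ with the slopes of the lattice of $G_l$. If one $j$ does not suffice, I would go back to Step 1 and replace $x$ by a suitable $\tF$- or $\tV$-translate so that $P$ becomes well-positioned.
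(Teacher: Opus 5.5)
\textbf{There is a genuine gap in Step~3.} Your outline follows the paper's route. \(L\) is the image of \(G_l\to M\), \(\alpha_l\mapsto x\). Laurent coordinates identify each \(G_l^i\) with a lattice in \(\mathcal B=K_\sigma[\Phi,\Phi^{-1}]\), and the kernel in degree \(i\) is that lattice intersected with \(\mathcal BP\).

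Your torsion step is correct, but not for the circular reason you give. The subgauge generated by \(p^Nx\) lies in \(p^NM\), which is torsion-free once \(p^N\) kills \(M_{\mathrm{tors}}\); this is exactly the paper's reduction.

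Step~3 carries the entire content of the lemma, and you do not show that the kernels \(G_l^i\cap\mathcal BP\), \(i\in\I\), together form a single copy of \(G_j\). The tool you name, Weierstrass division in completed lattices, is not what is needed: these are lattices in the Laurent \emph{polynomial} ring. You also leave open the possibility that no single \(j\) exists. Your normalization of \(P\) by a unit \(p^cT^e\) is the right one; what is missing is the reason it works.

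\textbf{The missing idea.} Do not view the exponent \(a(i,e)\) as one piecewise-linear function. View it as two conditions, each given by a \emph{multiplicative} Gauss valuation. After shifting to \(\I=[0,n]\), put \(v_0(\sum_q b_q\Phi^q)=\min_q v_p(b_q)\) and \(v_n=\min_q(v_p(b_q)+nq)\). Then
\(G_l^i\cong\Lambda_{i-l}=\{Q:\ v_0(Q)\ge 0,\ v_n(Q)\ge l-i\}\).

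\begin{enumerate}
\item Multiplicativity: \(v_0\) is multiplicative because \(k_\sigma[\Phi,\Phi^{-1}]\) is a domain, and \(v_n\) is \(v_0\) composed with the ring automorphism \(\Phi\mapsto p^n\Phi\).
\item The kernel: scale \(P\) by a power of \(p\) so that \(v_0(P)=0\), and put \(c=v_n(P)\). Then \(QP\in\Lambda_b\) if and only if \(Q\in\Lambda_{b+c}\). Hence \(\Lambda_{i-l}\cap\mathcal BP=\Lambda_{i-l+c}P\) for all \(i\) simultaneously; this is your ``surjectivity'', with no division argument.
\item A single \(j\): since \(\Lambda_b\Phi^t=\Lambda_{b-nt}\), choose \(0\le j<n\) and \(t\) with \(j+nt=l-c\). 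Then \(\Lambda_{i-j}\Phi^tP=\Lambda_{i-l+c}P\), and \(\Phi^tP\in\Lambda_{j-l}\).
\item The map: right multiplication by \(\Phi^tP\) is the required \(G_j\to G_l\). It is injective, and it commutes with inclusion, multiplication by \(p\), and left multiplication by \(\Phi\), which are \(\tF\), \(\tV\), \(\tau\) in these coordinates.
\end{enumerate}

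So one \(j\) always works (the case \(n=0\) is trivial), and no re-choice of \(x\) is needed. The tails impose nothing extra, because a gauge of level \(\I\) is determined by its components on \(\I\).

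The paper instead takes \(x\) in a simple subisocrystal of slope \(\lambda\) with \(l=\lfloor\lambda\rfloor\). The Newton polygon then gives \(v_0(P)=0\) and \(v_n(P)=e\) directly. For your arbitrary non-torsion \(x\), the scaling normalization above does the same job.
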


\begin{proof}
Multiplication by a sufficiently large power of \(p\) replaces the
gauge by a torsion-free subgauge of the same rank. Shift the grading
so that \(\I=[0,n]\). The return operators
\[
\Phi_i=\tF^i\tau\tF^{n-i},\qquad
\Psi_i=\tV^{n-i}\tau^{-1}\tV^i
\]
satisfy \(\Phi_i\Psi_i=p^n\). Rational gauge maps identify the
isocrystals \((M^i[1/p],\Phi_i)\), whose slopes lie in \([0,n]\).
Choose a simple subisocrystal \(D\), and put
\[
\lambda=l+\frac{r}{r+s}=\frac{e}{r+s},\qquad
l=\lfloor\lambda\rfloor,\quad r+s=\dim_KD,\quad 0\leq r<r+s.
\]
Choose \(0\ne x\in D\cap M^l\), using the rational identifications,
and let \(L\) be the image of \(G_l\to M\), \(\alpha_l\mapsto x\).
Simplicity makes \(x\) cyclic, so \(L\) has positive rank.

Set \(\mathcal A=W_\sigma[\Phi,\Phi^{-1}]\) and
\(\mathcal B=\mathcal A[1/p]\), with \(\Phi\) acting on \(D\) as
\(\Phi_l\). The monic relation of \(x\) has the form
\[
P=\Phi^{r+s}+\sum_{a=0}^{r+s-1}c_a\Phi^a,
\qquad D=\mathcal B/\mathcal BP.
\]
The cyclic Newton-polygon theorem gives
\(v_p(c_a)\geq(r+s-a)\lambda\) and \(v_p(c_0)=e\);
see \cite[Lemma~1 and Proposition~18]{Kedlaya}.

For a Laurent polynomial \(Q=\sum_qb_q\Phi^q\), define
\[
v_0(Q)=\min_qv_p(b_q),\qquad
v_n(Q)=\min_q\{v_p(b_q)+nq\},\qquad
\Lambda_b=\{Q:v_0(Q)\geq0,\ v_n(Q)\geq-b\}.
\]
Both valuations are multiplicative: reduction modulo \(p\) proves
this for \(v_0\), and substitution of \(p^n\Phi\) proves it for
\(v_n\). The path normal forms identify
\[
\tF^{a-i}:G_a^i\xrightarrow{\sim}\Lambda_{i-a}
\subset\mathcal B=G_a^a[1/p].
\]
Indeed, \(\Lambda_b=\oplus_q Wp^{\max(0,-nq-b)}\Phi^q\);
these are precisely the forward and backward paths from \(a\) to
\(i\). In these coordinates, \(\tF\) is inclusion, \(\tV\) is
multiplication by \(p\), and \(\tau\) is left multiplication by
\(\Phi\).

The coefficient inequalities give \(v_0(P)=0\) and \(v_n(P)=e\).
Torsion freeness allows the relations of \(L\) to be tested after
inverting \(p\); hence multiplicativity gives
\[
\ker(G_l^i\to L^i)=\Lambda_{i-l}\cap\mathcal BP
=\Lambda_{i-l+e}P.
\]
If \(l<n\), choose \(0\leq j<n\) and \(t\in\Z\) with
\(j+nt=l-e\). If \(l=n\), take \(j=n\) and \(t=-(r+s)\).
Right multiplication by \(\Phi^tP\) identifies the kernel gauge
with \(G_j\), since
\[
\Lambda_{i-j}\Phi^tP=\Lambda_{i-l+e}P.
\]
These maps are injective and commute with inclusion, multiplication
by \(p\), and left multiplication by \(\Phi\). They therefore
respect every gauge operator and prove the asserted exact sequence.
The same argument includes \(n=0\): then \(l=e=j=0\),
\(\Lambda_0=\mathcal A\), and \(\Phi^t\) is a unit.
\end{proof}

\begin{lemma}\label{paper:finite-length-resolutions}
Every coherent gauge of finite \(W\)-length has a finite resolution
by finite direct sums of the \(G_r\).
\end{lemma}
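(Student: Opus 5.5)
The plan is to reduce, by dévissage, to simple gauges of finite length and then write down explicit finite resolutions of these using the path normal forms of the $G_r$. A coherent gauge of finite $W$-length has a finite composition series in $\FGauge\td\I$. Given finite resolutions of the two outer terms of a short exact sequence $0\to M'\to M\to M''\to0$, the horseshoe lemma produces one for $M$: the $G_r$ are projective, and finite sums of them are closed under extensions. So it suffices to treat simple objects. Under the Morita identification with left $A_\I$-modules, a finite-length gauge is killed by $p$ in each degree after passing to its composition factors. Hence each simple object is a simple module over $A_\I/pA_\I$, and $A_\I/pA_\I$ is a finite-dimensional-in-each-$e_jAe_r$ algebra over $k$ built from the paths $\tF,\tV,\tau$ modulo $p=\tF\tV$.

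The next step is to classify the simples. In a simple gauge $S$ with $pS=0$, the operators $\tF\tV$ and $\tV\tF$ vanish. Consider the cyclic path $\Phi_i=\tF^i\tau\tF^{n-i}$ on $S^i$. It is $\sigma$-semilinear, and it is either nilpotent or bijective on the finite-dimensional $k$-space $S^i$.

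In the \emph{bijective case}, the simple objects correspond to simple $k_\sigma[\Phi,\Phi^{-1}]$-modules. A torsion-free rank-one lift $L$ of the form in \Cref{lowering the rank} (a slope-$\lambda$ lattice) has $L/pL$ surjecting onto $S$. Its finite resolution then comes from $0\to G_j\to G_l\to L\to0$ together with $L\xrightarrow{p}L$, via the mapping cone. More precisely, I would choose $L$ so that $L/pL$ is itself such a simple, or has a composition series by such simples. That reduces this case to the resolution of $L$, which is already of length one.

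In the \emph{nilpotent case}, the simple is concentrated in a single gauge degree $r$ with all operators zero; call it $k_r$. This requires the level condition, since in the tails $\tF$ and $\tV$ are invertible and $\tau$ links the ends. I would resolve $k_r$ by the Koszul-type complex
\[
0\to G_r\to G_{r-1}\oplus G_{r+1}\xrightarrow{(\tV,\tF)} G_r\to k_r\to0,
\]
modified at the endpoints $r=m$ and $r=n$, where $\tau$ enters. Exactness is checked degreewise using the explicit $W$-bases by paths: the kernel of $G_{r-1}\oplus G_{r+1}\to G_r$ is generated by the relation $(\tF,-\tV)$, coming from $\tF\tV=\tV\tF=p$. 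Multiplication by $p$ is handled by the first map $G_r\to G_{r\pm1}$.

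The main obstacle is the endpoint analysis in the nilpotent case. At $r=n$ or $r=m$, the paths through $\tau$ wrap around, and the radical of $e_rA_\I e_r/p$ is no longer generated only by $\tV$ and $\tF$ to the neighbours; a longer resolution, or a different choice of generators, may be needed there. I would first try to handle this by identifying $e_rA_\I e_r$ with a lattice $\Lambda_0\subset\mathcal B$ as in the proof of \Cref{lowering the rank}, where $\tau$ is left multiplication by $\Phi$. The syzygies could then be read off from valuation lattices and shown to be again sums of $G_j$'s, finishing after finitely many steps. If the semisimple part is not of the claimed form, I would instead bound the global dimension of $A_\I$ using the Laurent-coordinate description of each $G_a^i$, and conclude that finitely generated modules have finite projective resolutions.
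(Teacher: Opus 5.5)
Your skeleton matches the paper: a composition series with the horseshoe lemma, simples killed by \(p\), and Koszul-type resolutions. However, your classification of the simples is wrong. You split only by the forward return map \(\Phi_i=\tF^i\tau\tF^{n-i}\), and you claim that in the nilpotent case the simple sits in one degree with all operators zero. Modulo \(p\) there is a second, independent return map \(\Psi_i=\tV^{n-i}\tau^{-1}\tV^i\), and a simple can have \(\Phi=0\) while \(\Psi\) is bijective. For example, on \(\I=[0,2]\) take \(S^0=S^1=S^2=k\), both \(\tV\) equal to the identity, both \(\tF\) zero, and \(\tau=\sigma\). This gauge is simple and has \(\Phi=0\). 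It is nonzero in every degree, with \(\tV\) and \(\tau\) nonzero, so none of your resolutions covers it. The correct case division, used in the paper, runs as follows:
\begin{itemize}
\item The images of the full forward and backward return maps are subgauges, so on a simple object each return map is zero or invertible.
\item If one is invertible, all arrows in that direction are invertible and the opposite arrows vanish.
\item Only if both vanish does the arrow ideal act nilpotently, hence trivially, and this leaves a vertex simple.
\end{itemize}
The backward cyclic simples are then resolved like your bijective case, with \(\Psi\) in place of \(\Phi\).

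Two further steps are not carried out.

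\textbf{The endpoint simple.} There is only one simple at the endpoints, not two objects \(k_m,k_n\). The map \(\tau:S^n\to S^m\) is an isomorphism, so the operators are not all zero there. After gluing the endpoints via \(\tau\), the quiver is a cycle, and the glued vertex has exactly two arrows: \(\tF\) to \(m+1\) and \(\tV\tau^{-1}\) to \(n-1\). Your worry about the radical is therefore unfounded, and the same shape works: \(0\to G_m\to G_{m+1}\oplus G_{n-1}\to G_m\to S_\tau\to0\). Here the first map is \((\tV\alpha_{m+1},-\tau\tF\alpha_{n-1})\), and exactness follows from the path normal form, since the common multiple is \(p\alpha_m\). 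Your fallbacks are unnecessary. A global-dimension bound would in any case only give projectives that are summands of sums of \(G_r\).

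\textbf{The bijective case.} Here \Cref{lowering the rank} does not lift a prescribed simple modulo \(p\): it starts from a simple subisocrystal of a given torsion-free gauge. Your fallback, in which \(L/pL\) merely has a composition series by such simples, does not help either, because a finite resolution of \(L/pL\) gives none for its factors. Instead, lift the cyclic relation directly. Write \(S^m\cong k_\sigma[\Phi]/k_\sigma[\Phi]\bar P\) with \(\bar P\) monic and with nonzero constant term, lift it to \(P\in W_\sigma[\Phi]\), and take the Koszul complex \(0\to G_m\to G_m\oplus G_m\to G_m\to S\to0\) on \(\cdot P\) and \(\cdot p\). This complex is exactly your cone of \(p\) on \(L=G_m/G_mP\), and \(L\) has rank \(\deg P\), not one. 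Two things must be verified: that \(\cdot P\) is injective on \(G_m/p\), and that backward paths die in the cokernel, so that the cokernel is exactly \(S\). Both follow from the path normal form, because the constant term of \(P\) is a unit on the backward string and its leading term is monic on the forward string.
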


\begin{proof}
The sum of the lengths of the finitely many graded pieces gives a
composition series. A simple factor is killed by \(p\). Suppose
first that \(m<n\), and identify the endpoint vertices using
\(\tau\). Modulo \(p\), a nonzero path has only forward arrows or
only backward arrows. The images of the full forward and backward
return maps form subgauges, so each return map on a simple object is
either zero or invertible. If one is invertible, all arrows in that
direction are invertible and all opposite arrows vanish. Otherwise
the arrow ideal acts nilpotently and hence trivially on the simple
object. This gives the following cases.

For a simple object supported at an interior vertex \(r\), the two
relations \(\tV\alpha_r=\tF\alpha_r=0\) give
\[
0\longrightarrow G_r\longrightarrow G_{r-1}\oplus G_{r+1}
\longrightarrow G_r\longrightarrow S_r\longrightarrow0.
\]
The first map sends \(\alpha_r\) to
\((\tF\alpha_{r-1},-\tV\alpha_{r+1})\); the next map sends the
two generators to \(\tV\alpha_r\) and \(\tF\alpha_r\).
At the identified endpoint vertex, the corresponding resolution is
\[
0\longrightarrow G_m\longrightarrow G_{m+1}\oplus G_{n-1}
\longrightarrow G_m\longrightarrow S_\tau\longrightarrow0.
\]
Here the middle map sends the generators to \(\tF\alpha_m\) and
\(\tV\tau^{-1}\alpha_m\), and the first map is given by
\((\tV\alpha_{m+1},-\tau\tF\alpha_{n-1})\).
In both sequences exactness follows from the path normal form:
the two strings generate the kernel, and their common multiple is
\(p\alpha_r\), respectively \(p\alpha_m\).

In the forward cyclic case, the simple is determined by a finite
simple semilinear module with invertible return operator
\(\Phi=\tau\tF^{n-m}\). Its cyclic relation is a monic skew
polynomial \(P(\Phi)\) with nonzero constant coefficient. Lift
\(P\) to \(W_\sigma[\Phi]\). The simple is the quotient of
\(G_m\) by the images of right multiplication by \(p\) and \(P\),
and has the resolution
\[
0\longrightarrow G_m
\xrightarrow{(\cdot P,-\cdot p)}G_m\oplus G_m
\xrightarrow{(\cdot p,\cdot P)}G_m
\longrightarrow S\longrightarrow0.
\]
Indeed, \(p\) is injective on \(G_m\), and the path normal form
shows that right multiplication by \(P\) is injective on
\(G_m/p\): its constant term is a unit on the backward string,
and its leading term is monic on the forward string. The backward
cyclic case uses \(\Psi=\tV^{n-m}\tau^{-1}\) in the same way.
If \(m=n\), the algebra is \(W_\sigma[\tau,\tau^{-1}]\), and
the same two-element resolution treats every simple finite-length
module. Finally, the horseshoe lemma assembles the resolutions along
a composition series.
\end{proof}

\begin{proof}[Proof of \Cref{resolution of coherent complexes}]
The objects represented by bounded complexes of finite sums of
\(G_r\) form a triangulated subcategory. For a coherent gauge of
positive rank, \Cref{lowering the rank} supplies a subgauge in this
subcategory whose quotient has smaller rank. Induction reduces to
\Cref{paper:finite-length-resolutions}. Applying the same argument
to the finite cohomological truncation filtration proves the result
for every bounded coherent complex.
\end{proof}

\subsection{The completed coherent equivalence}

We now recall the equivalence that the remaining sections refine.
For a gauge complex, completion means derived \(p\)-completion,
\(\widehat M=R\varprojlim_a(W/p^a\otimes_W^{\rmL}M)\).
On the \(R\)-side it means \eqref{paper:ekedahl-completion}.
Write
\[
 \widehat{(\rmL\rmt_\I)^{\Tot}}
 =\widehat{(-)}\circ(\rmL\rmt_\I)^{\Tot}.
\]
This notation records the order of the operations: derive,
totalize, then complete.

\begin{theorem}[{\cite[Proposition~II.4.7 and Theorem~II.5.3]{Ek3}}]
\label{Ekedahl equivalence for coherent complexes}
The functor \(\FG^{\Tot}\) commutes with the respective completions
and induces an equivalence
\[
 D_c^b(R\td\I)\simeq D_c^b(\FGauge\td\I),
\]
with inverse \(\widehat{(\rmL\rmt_\I)^{\Tot}}\).
The completed adjunction unit and counit are the natural comparison
isomorphisms.
\end{theorem}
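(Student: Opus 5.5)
The strategy is to reduce the equivalence to Ekedahl's generators and a dévissage. We first check the gauge complex \(\FG^{\Tot}(N)\) of a bounded coherent \(R\)-complex, then check the unit and counit on generators.

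\textbf{Step 1: \(\FG^{\Tot}\) commutes with completion and preserves coherence.}
By \Cref{definition of FG}, the components of \(\FG(L)\) are the \(L^j\) or their twists, with gauge maps built from \(p,1,F,V\). Since \(R_a\otimes^{\rmL}_R(-)\) is related to \(W/p^a\otimes^{\rmL}_W(-)\) through the Cartier sequences, the two derived limits agree after applying \(\FG^{\Tot}\). This works because totalization is a finite sum, the grading lying in \(\I\).

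Coherence of \(\FG^{\Tot}(N)\) is checked by dévissage on the finite filtration of a coherent module. There are two kinds of factors.
\begin{itemize}
\item For a Dieudonn\'e factor, the gauge complex is a two-term complex of finite \(W\)-modules.
\item For a domino \(U_t\), the complex \(U_t^0\xrightarrow{dV\text{ or }d}U_t^1\) has finite cohomology in each gauge degree, because the product \(\prod_{a\geq t}k\,dV^a\) is hit by all but finitely many \(V^a\).
\end{itemize}

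\textbf{Step 2: the unit on generators.}
By \Cref{resolution of coherent complexes}, every \(M\in D_c^b(\FGauge\td\I)\) is a finite iterated cone of the \(G_r\). Both \(\FG^{\Tot}\) and \(\widehat{(\rmL\rmt)^{\Tot}}\) are exact, and \(\rmL\rmt(G_r)=\rmt(G_r)\) by projectivity. Hence it suffices to show that the completed unit \(\widehat{G_r}\to\FG^{\Tot}\widehat{\rmt(G_r)}\) is an isomorphism and that \(\widehat{\rmt(G_r)}\in D_c^b(R\td\I)\).
\begin{itemize}
\item The first assertion follows from \Cref{description of t(G_r)}, since the uncompleted unit is a quasi-isomorphism, combined with Step 1.
\item For the second, \(\widehat{\rmt(G_r)}\) is a finite complex of the modules \(\widehat R\,b_i\) and a completed top term. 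Its \(R_1\)-reduction is finite by the resolution \eqref{paper:R1-resolution}. Coherence of its cohomology comes from Ekedahl's criterion that finiteness of \(R_1\otimes^{\rmL}\) on complete complexes of finite level gives coherence.
\end{itemize}
So \(\widehat{(\rmL\rmt)^{\Tot}}\) lands in \(D_c^b(R\td\I)\) and the unit is invertible on all of \(D_c^b(\FGauge\td\I)\). Note that \(G_r\) is already \(p\)-complete, so \(\widehat{G_r}=G_r\).

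\textbf{Step 3: the counit is invertible, which forces fully faithfulness of \(\FG^{\Tot}\).}
Take \(N\in D_c^b(R\td\I)\), which is complete, and let \(C\) be the cone of the counit \(\widehat{(\rmL\rmt)^{\Tot}}\FG^{\Tot}N\to N\). Then \(C\) is complete, bounded and of level \(\I\). The triangle identity together with Step 2 gives \(\FG^{\Tot}(C)=0\).

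It remains to deduce \(C=0\), and we plan to use \eqref{R1 detects vanishing of complete complexes}. For this we must show \(R_1\otimes^{\rmL}_RC=0\). The reduction mod \(p\) of \(\FG^{\Tot}\) is the "conjugate/Hodge" realization, which computes \(R_1\otimes^{\rmL}C\) in gauge degrees. Concretely, applying \(W/p\otimes^{\rmL}\) to \(\FG^{\Tot}C\) and reading off the gauge pieces, one recovers the graded components of \(R_1\otimes^{\rmL}_RC\) via the resolution \eqref{paper:R1-resolution}, whose maps \(dV,V,F,Fd\) match the gauge edges \((F,V)\) at the wall. This identification is the main obstacle, and I would establish it first on the free modules \(R(s)\), using \eqref{paper:completion-adjunction}, and then extend it by exactness. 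Once \(C=0\), both adjunction maps are the natural comparison isomorphisms, which proves the theorem.
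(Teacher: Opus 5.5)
The paper gives no proof of this statement. It quotes it from \cite{Ek3} (Proposition~II.3.3 for the compatibility with completion) and uses it as background, so your proposal has to stand on its own. It does not, because the second bullet of Step~2 is false.

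The generators \(G_r\) are not coherent. By the path normal form, each \(G_r^i\) is a free \(W\)-module of infinite rank; compare \(\Lambda_b=\oplus_qWp^{\max(0,-nq-b)}\Phi^q\) in the proof of \Cref{lowering the rank}. So, contrary to your closing remark, \(G_r\ne\widehat G_{\I,r}\), and \(G_r\notin D_c^b(\FGauge\td\I)\). Nor is \(\widehat{\rmt(G_r)}\) coherent. For \(\I=[0,0]\) it is \(\widehat R^{\mathrm{top}}\). In general its \(R_1\)-reduction is the complex of the modules \(R_1(-i)\) and \(k_\sigma[F,F^{-1}](-n)\) of \Cref{interval-r1-generator-comparison}, which is not finite over \(k\) (already \(R_1^0=k_\sigma[F]\)). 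Since \(D_c^b(\FGauge\td\I)\) is not generated inside itself by the \(G_r\), a d\'evissage to the \(G_r\) cannot show that \(\widehat{(\rmL\rmt)^{\Tot}}\) lands in \(D_c^b(R\td\I)\).

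What Steps~2 and~3 actually give is the following:
the completed unit is invertible on coherent gauge complexes, and \(\FG^{\Tot}\) is fully faithful on \(D_c^b(R\td\I)\). The first bullet of Step~2 is fine: the uncompleted unit is a quasi-isomorphism on bounded complexes of finite sums of \(G_r\), and you complete afterwards, using that coherent gauges are complete. Essential surjectivity onto \(D_c^b(\FGauge\td\I)\) is missing.

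The missing step comes from the identification you defer to Step~3, applied to \(X=\widehat{(\rmL\rmt)^{\Tot}}(M)\) itself rather than to generators. It must be formulated as derived reduction modulo both \(\tF\) and \(\tV\), not modulo \(p\). For a Dieudonn\'e module \(L\) in \(R\)-degree \(0\), \(W/p\otimes^{\rmL}\FG(L)\) is \(L/p\) in gauge degrees \(0\) and \(1\), whereas \(R_1\otimes^{\rmL}_RL\) gives the cones of \(V\) and of \(F\).

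What does hold, by a direct termwise check on \(\FG(L)\) for any graded \(L\), concerns the Koszul complex \(\FG(L)^i\to\FG(L)^{i-1}\oplus\FG(L)^{i+1}\to\FG(L)^i\), with maps \(x\mapsto(\tV x,-\tF x)\) and \((y,z)\mapsto\tF y+\tV z\). It is acyclic in complex degrees other than \(i-1,i\). In those two degrees it reduces to the cone of \(F\) on \(L^{i-1}\) and the cone of \(V\) on \(L^i\), joined up to sign by \(Fd\) and \(dV\). This is exactly \eqref{paper:R1-resolution} tensored with \(L\), in \(R\)-module degree \(i\), so no detour through free modules is needed.

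This makes Step~3 work, and it also supplies coherence. \(R_1\otimes^{\rmL}_RX\) is the Koszul reduction of \(\FG^{\Tot}X\simeq M\). Its cohomology is finite over \(k\) in each degree of \(\I\) because \(M\) is coherent, and degree \(n+1\) vanishes because \(F\) is bijective on the top component. Ekedahl's criterion for bounded complete complexes then gives \(X\in D_c^b(R\td\I)\).
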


The compatibility with completion is
\cite[Proposition~II.3.3]{Ek3}. We use the equivalence as an
established result. Our aim is to retain the additional complex
direction in its inverse and identify what survives there. For a
finite interval, write \(\Ch_{-\I}\) for complexes supported in
degrees \([-n,-m]\); this is the range of the inner
\(R\)-complex degree of \(\rmt_\I\).

\section{The algebraic completed inverse}
\label{sec:paper-completed-inverse}

We distinguish completion of the coefficients from completeness of a
module. This distinction permits an algebraic construction of the
inverse functor before totalization. Throughout, \(\I=[m,n]\) is a
finite interval, and \(A_\I\) is the gauge algebra defined above.

\subsection{Completed coefficients and projective resolutions}

Put \(\widehat A_\I=\varprojlim_a A_\I/p^aA_\I\). Its standard
projectives are \(\widehat G_{\I,r}=\widehat A_\I e_r\), the
\(p\)-adic completions of \(G_{\I,r}\). A coherent gauge has finite
\(W\)-modules as its components, so its action extends uniquely and
continuously to \(\widehat A_\I\).

We will use the following completion facts on both sides of the gauge
algebra.

\begin{lemma}\label{flatness of central completion}
Let \(H\) be left and right Noetherian and let \(p\in Z(H)\).
Its \(p\)-adic completion is left and right Noetherian and flat on
both sides over \(H\). Completion is exact on finite left and finite
right \(H\)-modules.
\end{lemma}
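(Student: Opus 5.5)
This is the standard noncommutative version of the Artin–Rees theorem, and I would prove it by reducing everything to the commutative-style argument through the Rees ring. Write \(\widehat H=\varprojlim_a H/p^aH\). Since \(p\) is central, \(pH=Hp\) is a two-sided ideal. The filtration \(p^aH\) is therefore the same whether read as left or right ideals, and all statements are symmetric under passing to \(H^{\mathrm{op}}\). It thus suffices to treat left modules and left properties, then apply the argument to \(H^{\mathrm{op}}\).

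\textbf{Step 1: Artin–Rees for a central element.} Let \(N\subseteq M\) be finite left \(H\)-modules. I claim there is \(c\) with
\[
p^aM\cap N=p^{a-c}(p^cM\cap N)\quad\text{for } a\ge c.
\]
Consider the Rees ring \(H[pt]=\bigoplus_a p^aHt^a\subseteq H[t]\), with \(t\) a central indeterminate. Since \(p\) is central, this ring is the image of \(H[T]\to H[t]\), \(T\mapsto pt\). It is a quotient of the polynomial ring \(H[T]\), which is left Noetherian by Hilbert's basis theorem for rings with a central variable. The Rees module \(\bigoplus_a p^aM\,t^a\) is finitely generated over \(H[pt]\), hence Noetherian. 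Its submodule \(\bigoplus_a(p^aM\cap N)t^a\) is therefore finitely generated, and the claim follows as usual. I expect this to be the only place needing care: centrality of \(p\) is what makes \(H[pt]\) a ring and makes \(p^aM\) a submodule.

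\textbf{Step 2: exactness of completion on finite modules.} From Step 1, the induced filtration \(p^aM\cap N\) on \(N\) is cofinal with \(p^aN\). Apply \(\varprojlim\) to the short exact sequences
\[
0\to N/(p^aM\cap N)\to M/p^aM\to (M/N)/p^a\to0.
\]
The first system has surjective transition maps, so it is Mittag-Leffler, and the result is a short exact sequence \(0\to\widehat N\to\widehat M\to\widehat{M/N}\to0\). This proves that completion is exact on finite left modules.

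\textbf{Step 3: flatness.} Next, \(\widehat H\otimes_H M\to\widehat M\) is an isomorphism for finite \(M\). This holds for finite free \(M\). For general \(M\), take a finite presentation \(H^s\to H^r\to M\to0\) (available because \(H\) is left Noetherian). Both functors are right exact, the first by definition and the second by Step 2, so the five lemma gives the isomorphism. Hence \(\widehat H\otimes_H-\) is exact on finite left modules. In particular it is exact on inclusions of finitely generated left ideals, which gives flatness of \(\widehat H_H\); the general case follows since Tor commutes with filtered colimits. Running the same argument for \(H^{\mathrm{op}}\) gives flatness of \({}_H\widehat H\).

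\textbf{Step 4: Noetherianity of \(\widehat H\).} Take the \(p\)-adic filtration on \(\widehat H\). Its associated graded ring is \(\bigoplus_a p^aH/p^{a+1}H\), which is a quotient of \((H/pH)[T]\) with \(T\) central, and hence is left and right Noetherian. The filtration is complete and separated. The standard lemma then applies: a complete filtered ring whose associated graded ring is Noetherian is itself Noetherian. To verify this, lift generators of \(\operatorname{gr}\) of a left ideal and use completeness, approximating successively.

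Alternatively, one can show directly that \(\widehat H/p^a\widehat H=H/p^aH\) and argue via \(\varprojlim\). I would use the graded argument, which is symmetric and therefore gives both left and right Noetherianity.
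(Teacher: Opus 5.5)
Your proposal is correct and follows the paper's proof step for step: Artin--Rees via the Rees module over the quotient \(\bigoplus_a p^aHt^a\) of \(H[T]\), exactness of completion by Mittag-Leffler, flatness from \(\widehat H\otimes_H M\cong\widehat M\) on finite modules plus the ideal criterion, and Noetherianity by lifting generators through the complete \(p\)-adic filtration. One point in Step 4 should be tightened: you list the identification \(\widehat H/p^a\widehat H=H/p^aH\) only as an alternative, but it is what justifies \(\operatorname{gr}\widehat H=\operatorname{gr}_pH\) for the filtration \(p^a\widehat H\). It follows from Step 3 applied to \(H/p^aH\), and the paper uses it in exactly this role.
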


\begin{proof}
For a submodule \(K\subseteq P\) of a finite left \(H\)-module,
the Rees module \(\bigoplus_{a\geq0}(K\cap p^aP)t^a\) is a finite
graded module over \(\bigoplus_{a\geq0}p^aHt^a\). Indeed, the latter
ring is a quotient of the central polynomial ring \(H[T]\), and the
Rees module of \(P\) is finite over it. Choosing homogeneous
generators of degrees at most \(c\) gives
\[
 K\cap p^{a+c}P=p^a(K\cap p^cP)\subseteq p^aK.
\]
Thus the induced and intrinsic filtrations on \(K\) are cofinal.
Taking inverse limits of
\(0\to K/(K\cap p^aP)\to P/p^aP\to(P/K)/p^a(P/K)\to0\)
proves exactness, since the left transition maps are surjective.
The same argument applies to right modules.

A finite presentation consequently identifies
\(L\otimes_H\widehat H\) with \(\widehat L\) for every finite
right module \(L\). Applying this to right ideals proves left
flatness by the ideal criterion; left ideals prove right flatness.
Finally, \(\widehat H/p^a\widehat H=H/p^aH\), so
\(\operatorname{gr}_p\widehat H=\operatorname{gr}_pH\) is
Noetherian on both sides. Lifting homogeneous generators through the
complete filtration proves the assertion for \(\widehat H\).
\end{proof}

For completeness, the relevant Noetherian property follows directly
from the path presentation of \(A_\I\). When \(m<n\), identify the
endpoint objects through \(\tau\). This gives the full corner
\[
 B_\I=eA_\I e,\qquad e=\sum_{r=m}^{n-1}e_r.
\]
The idempotent is full because the omitted endpoint is isomorphic to
the first. Thus this is a Morita equivalence, not a quotient of
\(A_\I\). Opposite arrows in the resulting cycle compose to \(p\).
If \(h=n-m\), the sums of the forward and backward full cycles
generate a cyclic operator subring
\(W_\sigma[\Phi,\Psi]/(\Phi\Psi=\Psi\Phi=p^h)\).
This ring is a quotient of an iterated skew polynomial ring and is
Noetherian on both sides. Cancelling opposite arrows writes every
path as a full cycle times a path of length less than \(h\).
Consequently \(B_\I\) is finite over this subring on both sides;
it and \(A_\I\) are Noetherian. Their path bases also prove
\(p\)-torsion-freeness. For \(m=n\), the algebra is a skew Laurent
ring and the same conclusions hold.

\begin{corollary}\label{Exactness after completion}
A resolution of a coherent gauge by finite sums of \(G_{\I,r}\)
remains exact after termwise completion, with terms finite sums of
\(\widehat G_{\I,r}\). Moreover, extension of scalars gives a fully
faithful functor
\[
 j=\widehat A_\I\otimes_{A_\I}^{\rmL}(-):
 D_c^b(\FGauge\td\I)\longrightarrow D(\widehat A_\I).
\]
\end{corollary}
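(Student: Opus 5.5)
The first assertion will follow from flatness, and full faithfulness from a reduction to the standard projectives.

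For the first assertion, \Cref{flatness of central completion} applies because \(A_\I\) is left and right Noetherian with \(p\) central, as shown just above. Hence \(\widehat A_\I\) is flat over \(A_\I\) on both sides. Since \(A_\I\) is Noetherian and \(G_{\I,r}=A_\I e_r\) is finite, we have
\[
\widehat A_\I\otimes_{A_\I}G_{\I,r}=\widehat A_\I e_r=\widehat G_{\I,r}.
\]
So extension of scalars of a bounded complex of finite sums of the \(G_{\I,r}\) is the termwise completion. Flatness keeps exact sequences exact; in particular a resolution of a coherent gauge stays exact, and its augmentation target \(\widehat A_\I\otimes_{A_\I}M\) equals \(M\). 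Indeed \(M\) is finite and \(p\)-adically complete, being a finite sum of finite \(W\)-modules on \(\I\), so \(\widehat M=M\) by the finite-presentation identification in the proof of \Cref{flatness of central completion}.

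By \Cref{resolution of coherent complexes}, every object of \(D_c^b(\FGauge\td\I)\) is a bounded complex of finite sums of the \(G_{\I,r}\). Flatness also makes \(j\) the underived termwise tensor, so \(j\) takes values in bounded complexes of finite sums of \(\widehat G_{\I,r}\).

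For full faithfulness, the plan is the usual dévissage. Both source and target morphism groups are cohomological functors in each variable. Every object is obtained from shifts of the \(G_{\I,r}\) by finitely many cones, since the representatives are bounded complexes of projectives and the brutal truncation filtration has finitely many steps. So the five lemma reduces the claim to showing that
\[
\Hom_{D(\FGauge\td\I)}(G_r,G_s[i])\longrightarrow\Hom_{D(\widehat A_\I)}(\widehat G_r,\widehat G_s[i])
\]
is an isomorphism for all \(r,s,i\). Both sides vanish for \(i\ne0\) because the \(G_r\) and \(\widehat G_r\) are projective. For \(i=0\) the map is
\[
e_rA_\I e_s\to e_r\widehat A_\I e_s,
\]
that is, \(G_s^r\to\widehat{G_s^r}\) (with the idempotent and opposite-ring conventions). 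The danger is that \(G_s^r\) is not a finite \(W\)-module. By the Laurent-coordinate description in \Cref{lowering the rank}, it is a lattice \(\Lambda_b\) of infinite rank, and it is not \(p\)-complete. So this map is injective but not surjective, and naive full faithfulness fails on the generators themselves.

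This is the main obstacle. I would therefore not use the generators as test objects. Instead I would compute morphisms into a coherent target via adjunction. For a coherent \(N\), extension–restriction adjunction gives
\[
\Hom_{\widehat A_\I}(\widehat G_r,N)=e_rN=N^r=\Hom_{A_\I}(G_r,N).
\]
Here the \(\widehat A_\I\)-action on \(N\) is the unique continuous extension. The same holds derived, since \(\widehat G_r\) is projective over \(\widehat A_\I\). Thus, with source a bounded projective complex \(P\) and target a coherent complex \(Q\), I would aim to show
\[
R\Hom_{\widehat A_\I}(jP,jQ)=R\Hom_{\widehat A_\I}(\widehat A_\I\otimes P,jQ)=R\Hom_{A_\I}(P,\mathrm{res}\,jQ).
\]
Full faithfulness then reduces to checking that the unit \(Q\to\mathrm{res}\,jQ\) is an isomorphism for coherent \(Q\). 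By induction on the truncation filtration this reduces to a coherent gauge \(M\). By flatness, \(\widehat A_\I\otimes_{A_\I}M\) has cohomology only in degree zero, where it equals \(\widehat M=M\). This argument uses coherence of the target in an essential way.
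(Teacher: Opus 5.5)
Your proposal is correct and, once you drop the generator d\'evissage, it is the paper's argument. That d\'evissage is rightly abandoned: the \(G_{\I,r}\) have components of infinite \(W\)-rank, so they are not objects of \(D_c^b(\FGauge\td\I)\). What remains matches the paper step for step: flatness from \Cref{flatness of central completion} together with \(p\)-adic completeness of coherent gauges gives exactness; the unit \(M\to\operatorname{Res}j(M)\) is a quasi-isomorphism for coherent gauges and extends to bounded coherent complexes by truncation; and the extension--restriction adjunction then gives full faithfulness.
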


\begin{proof}
Exactness follows from \Cref{flatness of central completion}; the
augmentation is unchanged because a coherent gauge is already
\(p\)-adically complete. For a coherent gauge, the adjunction unit
\(M\to\operatorname{Res}j(M)\) is therefore a quasi-isomorphism.
Cohomological truncation extends this assertion to bounded coherent
complexes. Derived extension of scalars is left adjoint to
restriction, so an isomorphism on these units gives full faithfulness.
\end{proof}

The top \(R\)-module degree has a different coefficient ring. Set
\[
 R^{\mathrm{top}}=R^0[F^{-1}]=W_\sigma[F,F^{-1}],\qquad
 \widehat R^{\mathrm{top}}=W_\sigma\{F,F^{-1}\},
\]
where braces denote \(p\)-adically restricted Laurent series and
\(V=pF^{-1}\). The natural map
\(\widehat R^{\,0}\to\widehat R^{\mathrm{top}}\) extends this
identification.

\begin{definition}\label{completed R-modules of level I}
A graded algebraic \(\widehat R\)-module of level \(\I\) has components
\(M^m\xrightarrow d\cdots\xrightarrow dM^n\), with the Raynaud
relations, such that \(M^i\) is an algebraic
\(\widehat R^{\,0}\)-module for \(i<n\) and \(M^n\) is an
algebraic \(\widehat R^{\mathrm{top}}\)-module. We denote the
category by \(\widehat R\td\mathrm{Mod}\td\I\). No completeness
condition on the underlying modules is imposed.
\end{definition}

We need one further consequence of coefficient completion.

\begin{lemma}\label{paper:raynaud-right-flatness}
The module \(\widehat R^{\,1}\) is flat as a right
\(\widehat R^{\,0}\)-module.
\end{lemma}

\begin{proof}
The normal forms and \(FdV=d\) identify it, as a right module, with
the \(p\)-adic completion of \(\widehat R^{\,0}[V^{-1}]\):
\(dV^j\) corresponds to \(V^j\), and \(F^jd\) to \(V^{-j}\).
These identifications respect scalars and right multiplication by
\(F,V\); modulo \(p^a\) they identify the Laurent expansions.
The ring \(\widehat R^{\,0}\) is complete for its normal regular
element \(V\), with Noetherian associated graded ring
\((k_\sigma[F])[v;\sigma^{-1}]\). It and its Ore localization are
therefore Noetherian on both sides. Ore localization is right flat,
and its central \(p\)-adic completion is right flat by
\Cref{flatness of central completion}. Transitivity proves the claim.
\end{proof}

\subsection{Construction and comparison with derived completion}

The formulas for \(\FG\) describe how each path operator acts on
each underlying \(R\)-module component. Write
\(\rho_i^{j,s}:e_sA_\I e_j\to R^0\) for this coefficient map,
using \(R^{\mathrm{top}}\) when \(i=n\). Thus an operator from
gauge degree \(j\) to degree \(s\) acts on the copy of \(N^i\)
by \(\rho_i^{j,s}\). In particular,
\[
 \rho_i^{j,j+1}(\tF)=
 \begin{cases}1&i<j,\\F&i=j,\\p&i>j,\end{cases}
 \qquad
 \rho_i^{j+1,j}(\tV)=
 \begin{cases}p&i<j,\\V&i=j,\\1&i>j.\end{cases}
\]
The tail identification supplies \(\rho(\tau)\). Composition of
paths corresponds to multiplication of coefficients. Since
\(p^aR^0\subseteq V^aR^0\), these maps extend continuously to
\(\widehat\rho_i^{j,s}\) on \(e_s\widehat A_\I e_j\), with
values in the corresponding completed coefficient ring.

\begin{definition}\label{definition of widehat rmt}
For an algebraic \(\widehat A_\I\)-module \(M\), define
\(\widehat\rmt_\I(M)\in\Ch(\widehat R\td\I)\) by the
generators and differential of \Cref{rmt}, over the completed
coefficient rings, and impose the completed path relations
\[
 \beta_i\otimes(am)=
 \widehat\rho_i^{j,s}(a)(\beta_i\otimes m),
 \qquad a\in e_s\widehat A_\I e_j,\quad m\in e_jM.
\]
\end{definition}

An infinite path series is evaluated in the coefficient ring, not
summed in the algebraically free module. The same coefficient maps
extend \(\FG^{\Tot}\) to complexes of completed \(R\)-modules.
Taking the coordinates of a cycle, exactly as in the uncompleted
adjunction, gives
\begin{equation}\label{adjunction between completed FG and t}
 \Hom_{\Ch(\widehat R\td\I)}(\widehat\rmt_\I(M),N)
 \cong\Hom_{\widehat A_\I}(M,Z^0\FG^{\Tot}(N)).
\end{equation}
The completed path relation is precisely what makes this
correspondence \(\widehat A_\I\)-linear.

In particular, \(\widehat\rmt_\I\) is right exact and preserves
direct sums. Applying it to a free presentation gives the algebraic
tensor-product description
\begin{equation}\label{widehat-rmt-tensor-product}
 \widehat\rmt_\I(M)=
 \widehat\rmt_\I(\widehat A_\I)
       \otimes_{\widehat A_\I}M.
\end{equation}
Here \(\widehat\rmt_\I(\widehat A_\I)\) is a complex of left
completed \(R\)-modules and right \(\widehat A_\I\)-modules;
right multiplication on the regular module defines the latter action.
The tensor product is taken termwise and is not a completed tensor
product. We derive in \(\widehat A_\I\)-modules and write
\[
 \rmL\widehat\rmt_\I:D(\widehat A_\I)
   \longrightarrow D(\Ch_{-\I}(\widehat R\td\I)).
\]
Thus \(\rmL^q\widehat\rmt_\I(M)\) is an actual \(R\)-complex.
We call its degree the inner \(R\)-complex degree; the derived
degree is the outer resolution degree. The \(R\)-module grading
is a third, separate index.

\begin{proposition}\label{derived widehat t equals widehat derived t}
For \(M\in D_c^b(\FGauge\td\I)\), there is a natural isomorphism
\[
 (\rmL\widehat\rmt_\I)^{\Tot}(jM)
       \simeq\widehat{(\rmL\rmt_\I)^{\Tot}}(M)
       \quad\text{in }D(R).
\]
We henceforth suppress \(j\) on coherent inputs.
\end{proposition}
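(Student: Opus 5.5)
The plan is to compute both sides on a single finite projective resolution and to compare them term by term. By \Cref{resolution of coherent complexes}, represent \(M\) by a bounded complex \(P\) whose terms are finite direct sums of the \(G_{\I,r}\). Each \(P^q\) is projective over \(A_\I\), so \((\rmL\rmt_\I)^{\Tot}(M)\) is the totalization of the bounded double complex \(\rmt_\I(P)\), with \(\rmt_\I(P^q)\) given explicitly by \Cref{description of t(G_r)}. On the other side, \Cref{Exactness after completion} shows that \(jM\) is represented by the termwise completion \(\widehat P\), whose terms are finite sums of \(\widehat G_{\I,r}=\widehat A_\I e_r\). These terms are projective over \(\widehat A_\I\). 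Hence \((\rmL\widehat\rmt_\I)^{\Tot}(jM)\) is the totalization of \(\widehat\rmt_\I(\widehat P)\), and by \eqref{widehat-rmt-tensor-product} each term is \(\widehat\rmt_\I(\widehat A_\I)e_r\), a finite sum over the summands of \(P^q\).

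Next I construct the comparison map. The maps \(A_\I\to\widehat A_\I\) and \(R\to\widehat R\) are compatible with the coefficient maps \(\rho\mapsto\widehat\rho\), so they give a natural chain map \(\rmt_\I(P)\to\widehat\rmt_\I(\widehat P)\) of double complexes. The key step is to show that, for each \(r\) and each inner degree \(-i\), the induced map \(\rmt_\I(G_r)^{-i}\to\widehat\rmt_\I(\widehat G_r)^{-i}\) is the Ekedahl completion \eqref{paper:ekedahl-completion}.

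\begin{itemize}
\item For \(m\le i<n\), the source is the free module \(Rb_i\). The target is \(\widehat R b_i\): the completed path relations force \(\widehat\rmt_\I(\widehat G_r)\) to be generated by the \(b_i\) over the completed coefficients, with the same differential. Since \(R_a\otimes^{\rmL}_RR=R_a\) and the transition maps are surjective, the derived limit is \(\widehat R\).
\item The top term \(i=n\) needs separate treatment. The module \(R^{\mathrm{top}}b_n\) is flat over \(R^0\) as a localization. By level, it has no component in \(R\)-degree \(n+1\). Using the resolution \eqref{paper:R1-resolution} together with the Cartier filtration, I expect to see that \(R_a\otimes^{\rmL}_R R^{\mathrm{top}}b_n\) is concentrated in degree zero and equals \(W_\sigma[F,F^{-1}]/p^a\). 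Here one uses \(V=pF^{-1}\) to identify \(\Fil^a\) with \(p^a\) in the top degree.
\item The derived limit of these quotients is then \(\widehat R^{\mathrm{top}}b_n\), which is exactly the top term of \(\widehat\rmt_\I(\widehat G_r)\).
\end{itemize}

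Granting this identification, the argument finishes formally. Ekedahl completion is an exact functor on \(D(R)\), since it is built from \(R_a\otimes^{\rmL}\) and \(R\varprojlim\). It therefore commutes with totalization of the bounded double complex \(\rmt_\I(P)\), which involves only finitely many terms because both \(P\) and the inner degrees \([-n,-m]\) are bounded. So \(\widehat{\Tot\,\rmt_\I(P)}\simeq\Tot\,\widehat\rmt_\I(\widehat P)\), and the comparison map above realizes this isomorphism.

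For naturality and independence of \(P\), I would argue through the completion adjunction \eqref{paper:completion-adjunction}. The target \(\Tot\,\widehat\rmt_\I(\widehat P)\) is complete, being a finite complex of completed free modules. So the comparison is the unique factorization of the natural map \((\rmL\rmt_\I)^{\Tot}(M)\to(\rmL\widehat\rmt_\I)^{\Tot}(jM)\) through completion of the source. That natural map is the derived functor of the coefficient-change transformation, so it is defined independently of \(P\).

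I expect the main obstacle to be the top-degree computation. One must check that derived reduction of \(R^{\mathrm{top}}\) has no higher terms and that its limit is the restricted Laurent ring \(W_\sigma\{F,F^{-1}\}\) rather than a larger completion. One must also check that the completed path relations do not introduce extra relations in \(\widehat\rmt_\I(\widehat G_r)\), and I would verify this using the path normal forms.
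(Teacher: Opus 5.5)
Your proposal is correct and follows essentially the same route as the paper. Both arguments resolve \(M\) by finite sums of \(G_{\I,r}\) and complete termwise to compute the left side. They then identify \(\widehat\rmt_\I(\widehat G_{\I,r})\) with the Ekedahl completion of the explicit complex of \Cref{description of t(G_r)}, including the top term \(\widehat R^{\mathrm{top}}\), and commute derived completion with the finite totalization. Your check of the top degree via \eqref{paper:R1-resolution} and your naturality argument through \eqref{paper:completion-adjunction} only spell out what the paper's short proof asserts.
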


\begin{proof}
The completed relations give, naturally in the projective generator,
\(\widehat\rmt_\I(\widehat G_{\I,r})
 =\widehat{\rmt_\I(G_{\I,r})}\): the explicit complex of
\Cref{description of t(G_r)} has the same differential, with
\(R\) and \(R^{\mathrm{top}}\) replaced by their completions.
Choose the bounded finite projective resolution provided by
\Cref{resolution of coherent complexes}. Its completed resolution
computes the left side. Derived completion commutes with the finite
totalization, and the generator identity identifies it with the right
side.
\end{proof}

\subsection{Diagonal cohomology before totalization}

Ekedahl compared the completed inverse with its underived form for
a coherent gauge whose inverse is diagonal
\cite[Theorem~II.5.6 and Corollary~II.5.6.1]{Ek3}. The following
statement also controls the actual complexes in the resolution
direction. Its proof uses the two-degree support of the generator
complexes and the right flatness proved above.

\begin{theorem}\label{completed rmt on diagonal heart}
Let \(N\in\dc\) have level \(\I\). Then
\[
 \rmL^q\widehat\rmt_\I(\FG^{\Tot}(N))=0\quad(q\ne0)
 \qquad\text{in }\Ch(\widehat R\td\I),
\]
and \(\rmL^0\widehat\rmt_\I(\FG^{\Tot}(N))\) represents
\(N\) in \(D(R)\).
\end{theorem}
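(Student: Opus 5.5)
We prove the statement by examining the double complex \(X=\rmL\widehat\rmt_\I(M)\), where \(M=\FG^{\Tot}(N)\). Its outer index is the resolution degree \(q\) and its inner index is the \(R\)-complex degree \(v\in[-n,-m]\). By \Cref{Ekedahl equivalence for coherent complexes}, \(M\) lies in \(D_c^b(\FGauge\td\I)\). By \Cref{resolution of coherent complexes} and \Cref{Exactness after completion}, we may therefore compute \(X\) from a bounded complex \(P_\bullet\) of finite sums of \(\widehat G_{\I,r}\) resolving \(jM\). Then \(X^{q,v}=\widehat\rmt_\I(P_{-q})^{v}\), and by \Cref{derived widehat t equals widehat derived t} its totalization is \(N\).

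The first step fixes the inner degree \(v=-i\) and reads off the row. By \Cref{description of t(G_r)} and \eqref{widehat-rmt-tensor-product}, the column \(\widehat\rmt_\I(-)^{-i}\) sends \(\widehat G_{\I,r}\) to a free module on \(b_i\) in \(R\)-module degree \(i\), over \(\widehat R\) (or \(\widehat R^{\mathrm{top}}\) when \(i=n\)). So the functor \(M\mapsto \widehat\rmt_\I(M)^{-i}\) is the tensor product \(\widehat R e\otimes_{\widehat A_\I}(-)\) with a bimodule concentrated in \(R\)-module degrees \(i\) and \(i+1\). In degree \(i\) it factors as \(\widehat R^{\,0}\otimes_{\widehat R^0}\) of a module computed through the coefficient maps \(\widehat\rho_i\); in degree \(i+1\) it is \(\widehat R^{\,1}\) tensored with that same degree-\(i\) piece. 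Hence, by \Cref{paper:raynaud-right-flatness}, the degree-\(i+1\) part of the row cohomology \(\HH^q\) is \(\widehat R^{\,1}\otimes_{\widehat R^0}\) applied to the degree-\(i\) part. It therefore suffices to show that the degree-\(i\) rows \(\rmL^q\) vanish for \(q\ne0\). Moreover, the degree-\(i\) component of \(\rmL^q\widehat\rmt(M)^{-i}\) is \(\operatorname{Tor}^{\widehat A}_{-q}(\widehat R^0_{(i)},M)\) for a fixed bimodule \(\widehat R^0_{(i)}\) given by \(\widehat\rho_i\).

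The second step is a spectral sequence argument. Consider the spectral sequence of \(X\) that first takes outer cohomology. Its \(E_1\) term is \(\rmL^q\widehat\rmt(M)^{v}\). Since the whole diagram lives only in \(R\)-module degrees \(i,i+1\) for inner degree \(-i\), the inner differentials \(d_r\) for \(r\ge2\) must change the \(R\)-module degree. They do not change it, because \(\delta\) is \(R\)-linear and the \(R\)-module grading is preserved. A given \(R\)-module degree \(u\) meets only inner degrees \(-u\) and \(1-u\), so in each \(R\)-degree the sequence is a two-row situation, and it degenerates at \(E_2\) once the differentials of length \(\ge2\) are ruled out by support. Consequently \(\HH^{*}(N)^u\) is assembled from \(\HH^{-u}_{\rm inner}\) of \(\rmL^q\) and \(\HH^{1-u}\) with total degree \(q+v\).

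The third step compares with the diagonal condition \eqref{paper:diagonal-heart}. It says \(\HH^v(N)^u\neq0\) only for \(u\in\{-v,1-v\}\), which forces the contributions with \(q\ne0\) to vanish in \(E_2\). We then propagate this vanishing from \(E_2\) back to \(E_1\), i.e. to the actual rows. This is the main obstacle, since an acyclic complex need not be zero. The plan here is to proceed as follows. First, use the surjectivity condition \(F^\infty B\HH^v(N)^{1-v}=\HH^v(N)^{1-v}\) to show that the degree-\(u\) lower rows are determined by, and vanish together with, the inner \(d\)-images. Next, by the flatness reduction of the first step, vanishing of the degree-\(i\) row in each \(q\ne0\) gives vanishing of the entire row. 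Finally, I would use descending induction on \(i\) from \(n\) (where \(\widehat R^{\mathrm{top}}\) makes \(F\) invertible) to show that the degree-\(i\) Tor groups vanish.

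With the rows for \(q\ne0\) zero, the spectral sequence collapses, and \(X\simeq\rmL^0\widehat\rmt_\I(M)\) as a single \(R\)-complex. By \Cref{derived widehat t equals widehat derived t} this complex represents \(N\).
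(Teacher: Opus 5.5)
Your first two steps agree with the paper: support in degrees $u,u+1$ gives $E_2=E_\infty$, and right flatness gives $(E_1^{-u+1,q})^{u}=\widehat R^{\,1}\otimes_{\widehat R^{\,0}}(E_1^{-u+1,q})^{u-1}$. The third step, however, has two genuine gaps.

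\textbf{Row $q=1$.} The support condition in \eqref{paper:diagonal-heart} does not make the $E_2$ rows with $q\neq0$ vanish. In row $q=1$, the lower component $(E_2^{-u,1})^u$ is a subquotient of $\HH^{1-u}(N)^u$. This is the upper component of $\HH^{1-u}(N)$, which the support condition allows to be nonzero. Likewise, the upper components in row $q=-1$ sit in $\HH^{-u}(N)^u$, which is also allowed. Support only gives $(E_2^{-u,q})^u=0$ for $q\neq0,1$, together with vanishing of the upper components in row $1$. The $F$-saturation condition is needed precisely to kill row $1$, not in the propagation step. With only rows $0,1$ one has $0\to E_2^{t,0}\to\HH^t(N)\to E_2^{t-1,1}\to0$. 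The quotient is concentrated in degree $1-t$, so the $R$-submodule $E_2^{t,0}$ contains $\HH^t(N)^{-t}$. Hence it contains $F^\infty B\HH^t(N)^{1-t}=\HH^t(N)^{1-t}$, and therefore $E_2^{\bullet,1}=0$.

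\textbf{From $E_2$ to the actual rows.} This passage must run upward from $u=m$, and it needs only the lower components of $E_2$. In module degree $u$ nothing enters $(E_1^{-u,q})^u$, since the preceding column lives in degrees $u+1,u+2$. So $(E_2^{-u,q})^u=0$ makes $(E_1^{-u,q})^u\to(E_1^{-u+1,q})^u$ injective; this is a pure support statement and uses no saturation. For $u=m$ the target is zero because the inner complex stops at $-m$ (that is, $\delta(\beta_m)=0$). Flatness then kills $(E_1^{-m,q})^{m+1}$, which is the target at $u=m+1$, and one ascends to $n$.

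Your descending induction from $i=n$ has no base case. The hypothesis $(E_2^{-n,q})^n=0$ only gives an injection into $(E_1^{-n+1,q})^n$, which is not yet known to vanish, and invertibility of $F$ in the top degree does not change this. Even granting vanishing of the top term, the downward step would need $\widehat R^{\,1}\otimes_{\widehat R^{\,0}}Y=0\Rightarrow Y=0$. This fails: $\widehat R^{\,1}$ is a completed localization at $V$, and $\widehat R^{\,1}\otimes_{\widehat R^{\,0}}(\widehat R^{\,0}/\widehat R^{\,0}V)=0$.
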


\begin{proof}
Choose a bounded resolution \(P^\bullet\to\FG^{\Tot}(N)\) by
finite sums of \(G_{\I,r}\), and complete it termwise. Taking
cohomology first in the outer resolution degree gives
\[
 E_1^{p,q}=\HH^q(\widehat\rmt_\I(\widehat P^\bullet)^p)
       =(\rmL^q\widehat\rmt_\I(\FG^{\Tot}(N)))^p,
 \qquad E_2^{p,q}=\HH^p(E_1^{\bullet,q})
       \Longrightarrow\HH^{p+q}(N).
\]
The abutment follows from the preceding proposition and Ekedahl's
equivalence. By \Cref{description of t(G_r)}, \(E_1^{-u,q}\)
is supported in \(R\)-module degrees \(u,u+1\). The source and
target of \(d_r:E_r^{p,q}\to E_r^{p+r,q-r+1}\) consequently have
disjoint supports for \(r\geq2\), so \(E_2=E_\infty\).

Fix \(q\) and suppose \((E_2^{-u,q})^u=0\) for every \(u\).
There is no incoming term in module degree \(u\); hence
\[
 (E_1^{-u,q})^u\hookrightarrow(E_1^{-u+1,q})^u.
\]
The target is zero for \(u=m\). For \(m<u\leq n\), the
generator complex and \Cref{paper:raynaud-right-flatness} identify
\[
 (E_1^{-u+1,q})^u
   =\widehat R^{\,1}\otimes_{\widehat R^{\,0}}
                  (E_1^{-u+1,q})^{u-1}.
\]
Induction from \(u=m\) makes each source and target zero.
These exhaust the components of the row; the last component in
degree \(n+1\) is zero by the level condition. Thus the hypothesis
on \(E_2\) implies \(E_1^{\bullet,q}=0\) as an actual complex,
not merely that it is acyclic.

The diagonal-heart condition gives
\(\HH^{q-u}(N)^u=0\) unless \(q=0,1\). Since
\((E_2^{-u,q})^u\) is a subquotient of this group, the preceding
induction eliminates every other row. In row \(q=1\), its
component in degree \(u\) at position \(-u+1\) is a subquotient
of \(\HH^{2-u}(N)^u=0\). Thus \(E_2^{t,1}\) is concentrated
in module degree \(-t\).

The spectral-sequence filtration now gives
\[
 0\longrightarrow E_2^{t,0}\longrightarrow\HH^t(N)
     \longrightarrow E_2^{t-1,1}\longrightarrow0.
\]
The quotient is concentrated in degree \(1-t\), so the submodule
contains \(\HH^t(N)^{-t}\). It therefore contains the
\(F\)-saturated boundaries generated by this component, which are
all of \(\HH^t(N)^{1-t}\). The quotient vanishes. Applying the
same induction to row \(q=1\) proves strict vanishing there also.
Only outer degree zero remains, and totalization represents \(N\).
\end{proof}

For \(N\in\dc\) of level \(\I\), define
\begin{equation}\label{interval-candidate-representative}
 \mathcal S_\I(N)=
 \rmL^0\widehat\rmt_\I(\FG^{\Tot}(N))\in\Ch(R).
\end{equation}

\begin{corollary}\label{embedding-natural-representative}
The functor \(\mathcal S_\I\) is exact on the diagonal heart at
level \(\I\), and localization \(q:\Ch(R)\to D(R)\) gives a
natural isomorphism \(q\mathcal S_\I(N)\simeq N\).
\end{corollary}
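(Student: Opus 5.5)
We treat exactness and the natural isomorphism separately, both resting on \Cref{completed rmt on diagonal heart}. Regard \(\mathcal S_\I\) as the composite of \(\FG^{\Tot}\), the extension \(j\) of \Cref{Exactness after completion}, and \(\rmL^0\widehat\rmt_\I\). A short exact sequence \(0\to N'\to N\to N''\to0\) in \(\dc_\I\) is a distinguished triangle in \(D_c^b(R\td\I)\). Since \(\FG^{\Tot}\) and \(j\) are triangulated, applying \(\rmL\widehat\rmt_\I\) gives a distinguished triangle in \(D(\Ch_{-\I}(\widehat R\td\I))\). Its long exact sequence of outer cohomology is a long exact sequence in the abelian category \(\Ch(\widehat R\td\I)\). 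By \Cref{completed rmt on diagonal heart}, every \(\rmL^q\widehat\rmt_\I\) with \(q\ne0\) is the zero complex for all three objects. Hence the outer degree zero terms form a short exact sequence of actual complexes:
\[
0\to\mathcal S_\I(N')\to\mathcal S_\I(N)\to\mathcal S_\I(N'')\to0 .
\]
This is termwise exact, so it remains short exact after forgetting to \(\Ch(R)\). Functoriality of \(\mathcal S_\I\) is automatic, because \(\rmL^0\widehat\rmt_\I\) is a functor on the derived category, and \(\FG^{\Tot}\) and \(j\) are functors.

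For the comparison, we first note that the outer complex \(\rmL\widehat\rmt_\I(jM)\) has cohomology concentrated in degree \(0\) when \(M=\FG^{\Tot}(N)\). Truncation therefore gives natural quasi-isomorphisms
\[
\rmL\widehat\rmt_\I(jM)\xleftarrow{\sim}\tau^{\le0}\rmL\widehat\rmt_\I(jM)\xrightarrow{\sim}\rmL^0\widehat\rmt_\I(jM)
\]
in \(D(\Ch_{-\I})\). These truncations are taken in the outer direction, and each truncated object is a bicomplex. Totalization is over the finite inner range \([-n,-m]\), so it preserves quasi-isomorphisms. It therefore yields natural isomorphisms
\[
(\rmL\widehat\rmt_\I)^{\Tot}(jM)\simeq q\,\mathcal S_\I(N)
\]
in \(D(R)\). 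Next, \Cref{derived widehat t equals widehat derived t} identifies the left side with \(\widehat{(\rmL\rmt_\I)^{\Tot}}(\FG^{\Tot}N)\). Finally, the counit of \Cref{Ekedahl equivalence for coherent complexes} identifies this with \(N\). Each step is natural in \(N\), so the composite \(q\mathcal S_\I(N)\simeq N\) is natural.

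The main obstacle is naturality in the first identification. The isomorphism of \Cref{derived widehat t equals widehat derived t} was built through a chosen bounded projective resolution, so we must check that it does not depend on that choice. We will verify that it is induced by the canonical comparison map \(\widehat{\rmt_\I(P)}\to\widehat\rmt_\I(\widehat P)\) for complexes \(P\) of finite sums of the \(G_{\I,r}\). This map is functorial in chain maps of such resolutions, and any two resolutions are related by a homotopy equivalence unique up to homotopy. Hence the isomorphism descends to a natural transformation of functors on \(D_c^b\). Composing it with the natural counit completes the argument.
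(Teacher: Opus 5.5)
Your proposal is correct and takes the paper's approach. Exactness comes from applying \(\rmL\widehat\rmt_\I\circ\FG^{\Tot}\) to the triangle of a short exact sequence and using the strict vanishing of \(\rmL^q\widehat\rmt_\I\) for \(q\ne0\). The representative assertion is \Cref{completed rmt on diagonal heart}, whose proof ends with your chain of outer truncation, \Cref{derived widehat t equals widehat derived t}, and Ekedahl's counit. Your check that the identification does not depend on the resolution is harmless but redundant, since \Cref{derived widehat t equals widehat derived t} is already stated as a natural isomorphism.
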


\begin{proof}
The representative assertion is the theorem. Apply
\(\rmL\widehat\rmt_\I\circ\FG^{\Tot}\) to the triangle of a
short exact sequence in \(\dc\). Strict outer concentration makes
its cohomology sequence short exact in \(\Ch(R)\).
\end{proof}

\begin{theorem}\label{completed rmt and diagonal cohomology}
For \(M\in D_c^b(\FGauge\td\I)\), put
\(N=\widehat{(\rmL\rmt_\I)^{\Tot}}(M)\). There are natural
isomorphisms of actual \(R\)-complexes
\[
 \rmL^a\widehat\rmt_\I(M)
       \cong\mathcal S_\I(\widetilde{\HH}^{\,a}(N)).
\]
In particular, this complex represents the diagonal cohomology object
\(\widetilde{\HH}^{\,a}(N)\).
\end{theorem}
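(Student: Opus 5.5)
We plan to reduce to the diagonal heart by applying the Ekedahl equivalence to the diagonal truncation tower of \(N\), and then to use the strict concentration proved in \Cref{completed rmt on diagonal heart}. Since the diagonal truncations preserve finite level and \(N\in D_c^b(R\td\I)\), the object \(N\) has a finite filtration by diagonal truncations \(\tau_{\le a}^{\dc}N\). Its graded pieces are \(\tH^a(N)[-a]\), with \(\tH^a(N)\in\dc_\I\). By \Cref{Ekedahl equivalence for coherent complexes} we have \(M\simeq\FG^{\Tot}(N)\) naturally. Applying \(\FG^{\Tot}\) to the truncation triangles therefore gives triangles in \(D_c^b(\FGauge\td\I)\):
\[
 \FG^{\Tot}(\tau_{\le a-1}N)\to\FG^{\Tot}(\tau_{\le a}N)\to\FG^{\Tot}(\tH^a(N))[-a]\to.
\]
Next we apply the triangulated functor \(\rmL\widehat\rmt_\I\circ j\), which takes values in \(D(\Ch_{-\I}(\widehat R\td\I))\), an abelian-category derived category. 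We then take the long exact sequences of outer cohomology \(\rmL^q\widehat\rmt_\I\). These are exact sequences in the abelian category \(\Ch(\widehat R\td\I)\) of actual complexes.

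By \Cref{completed rmt on diagonal heart}, the third term has outer cohomology concentrated in outer degree \(a\), where it equals \(\mathcal S_\I(\tH^a(N))\). An induction on the length of the filtration then shows that \(\rmL^q\widehat\rmt_\I(\FG^{\Tot}(\tau_{\le a}N))\) vanishes for \(q>a\). It also shows that the map \(\tau_{\le a-1}N\to\tau_{\le a}N\) induces isomorphisms on \(\rmL^q\) for \(q<a\). Similarly, starting from the other end with \(\tau_{\ge a}\), the connecting maps vanish for degree reasons, because the source and target live in different outer degrees. Combining the two inductions yields the isomorphism
\[
 \rmL^a\widehat\rmt_\I(M)\cong\rmL^a\widehat\rmt_\I\bigl(\FG^{\Tot}(\tH^a(N))[-a]\bigr)=\mathcal S_\I(\tH^a(N)).
\]
Concretely, this identification arises from the two maps \(\tau_{\le a}N\to N\) and \(\tau_{\le a}N\to\tH^a(N)[-a]\). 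Both induce isomorphisms on \(\rmL^a\), because the neighbouring terms in the long exact sequences vanish in outer degree \(a\) and \(a\pm1\). Since both maps are canonical, naturality in \(M\) follows from the functoriality of the diagonal truncations and of the equivalence.

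The point I expect to need care is that all of this happens in actual complexes rather than up to quasi-isomorphism. The long exact sequences must be exact in \(\Ch(R)\) itself, not just after localization. This holds because outer cohomology of an object of \(D(\Ch_{-\I})\) is computed in the abelian category \(\Ch_{-\I}(\widehat R\td\I)\). The vanishing statements we feed in are the strict ones of \Cref{completed rmt on diagonal heart}, not merely acyclicity. We should also check two further points. First, the identification \(j\FG^{\Tot}(N)\simeq jM\) must be compatible with the triangles; this is immediate from the functoriality of \(j\). Second, the complexes obtained are \(R\)-complexes through restriction along \(R\to\widehat R\). The final claim, that the resulting complex represents \(\tH^a(N)\), then follows from \Cref{embedding-natural-representative}.
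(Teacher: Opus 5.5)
Your proposal is correct and follows the paper's argument. The paper also notes that \(\rmL\widehat\rmt_\I\circ\FG^{\Tot}\) sends the diagonal heart into the outer heart by \Cref{completed rmt on diagonal heart}, and then uses induction on the bounded diagonal truncation triangles to get \(t\)-exactness and hence compatibility with cohomology; your long-exact-sequence induction in the abelian category of actual complexes is that step written out in full.
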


\begin{proof}
The triangulated functor
\(\rmL\widehat\rmt_\I\circ\FG^{\Tot}\) takes the diagonal
heart to the standard outer heart by
\Cref{completed rmt on diagonal heart}. Since the diagonal
\(t\)-structure is bounded, induction on its truncation triangles
makes this functor \(t\)-exact. It therefore commutes with
cohomology. Substituting \(M\simeq\FG^{\Tot}(N)\) gives the
formula.
\end{proof}

\section{Independence of the interval}
\label{sec:paper-interval-independence}

The completed inverse is defined using a finite interval. Enlarging
that interval changes the projective generators and can change their
underived images. We prove that coherent inputs nevertheless give
the same object of \(D(\Ch(R))\), before totalization. The proof
first compares the generators after reduction by \(R_1\), and then
uses completeness.

\subsection{The comparison and elementary resolutions}

Let \(\I=[m,n]\subseteq\mathrm J\). Extending the constant tails
defines an exact change-of-level functor \(E_{\mathrm J\I}\).
For \(\mathrm J=[m,n+1]\), the new edge has
\((\tF,\tV)=(1,p)\); for \(\mathrm J=[m-1,n]\), it has
\((\tF,\tV)=(p,1)\). The endpoint identification is inherited
from \(\tau_\I\). This applies to algebraic completed gauges:
the bimodule \(E_{\mathrm J\I}(\widehat A_\I)\) has, as a
right \(\widehat A_\I\)-module, a finite sum of idempotent
summands, with endpoint components repeated along the tails. Thus
\[
 E_{\mathrm J\I}(M)
   =E_{\mathrm J\I}(\widehat A_\I)
           \otimes_{\widehat A_\I}M,
\]
and this tensor functor is exact.

A completed \(R\)-module of level \(\I\) is regarded as one of
level \(\mathrm J\) by adding zero components, restricting the old
top coefficient action when necessary. This gives
\(E_{\mathrm J\I}Z^0\FG_\I^{\Tot}(N)
 =Z^0\FG_{\mathrm J}^{\Tot}(N)\). Apply change of level to the
unit of \eqref{adjunction between completed FG and t} and then use
the adjunction at level \(\mathrm J\). The resulting comparison is
\begin{equation}\label{interval-underived-comparison}
 \alpha_{\mathrm J\I,M}:
 \widehat\rmt_{\mathrm J}(E_{\mathrm J\I}M)
       \longrightarrow\widehat\rmt_\I(M).
\end{equation}
It sends \(\beta_i\otimes x\) to the same symbol for
\(i\in\I\), and to zero otherwise. Symbols in the extended
gauge tails are interpreted by the defining relations. For example,
if \(x_{n+1}=\tF x_n\) is the added copy of \(x_n\), then
\(\beta_n\otimes x_{n+1}=F(\beta_n\otimes x_n)\).
The comparisons satisfy
\begin{equation}\label{interval-comparison-cocycle}
 \alpha_{K\I}=
 \alpha_{\mathrm J\I}\circ\alpha_{K\mathrm J}E_{\mathrm J\I}
 \qquad(\I\subseteq\mathrm J\subseteq K).
\end{equation}

The elementary enlargements have explicit flat resolutions. We use
the reduced algebra \(B_{\mathrm J}\) introduced above; its
vertices can be taken to be \(m,\ldots,n\) in either enlargement,
by omitting the new endpoint. All completed generators below are
viewed through this Morita equivalence.

\begin{lemma}\label{interval-flat-completion}
Let \(H=\widehat B_{\mathrm J}\), and let \(z\) be central.
Then \(H\langle z\rangle=\varprojlim_a(H/p^aH)[z]\) is flat
on both sides over \(H\). Put
\[
 \widehat G_{\mathrm J,s}\langle z\rangle
     =\varprojlim_a(\widehat G_{\mathrm J,s}/p^a)[z]
     =H\langle z\rangle e_s.
\]
These are flat left \(H\)-modules. For any right \(H\)-module
\(T\), the coefficient map
\begin{equation}\label{interval-series-injection}
 T\otimes_H H\langle z\rangle\hookrightarrow T[[z]]
\end{equation}
is injective.
\end{lemma}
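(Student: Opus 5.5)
The argument has two parts: flatness comes from the completion lemma already proved, and the injectivity \eqref{interval-series-injection} reduces to finitely presented \(T\), where Noetherianity controls the kernel.

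\emph{Flatness.} By the discussion following \Cref{flatness of central completion}, \(B_{\mathrm J}\) is Noetherian on both sides, and so is its completion \(H=\widehat B_{\mathrm J}\). Since \(z\) is central, \(H[z]\) is again Noetherian on both sides by the Hilbert basis theorem, and \(p\) is central in it. It is free, hence flat, over \(H\) on both sides. Since \(H/p^aH[z]=(H/p^aH)[z]\), the ring \(H\langle z\rangle\) is precisely the \(p\)-adic completion of \(H[z]\). \Cref{flatness of central completion}, applied to \(H[z]\), therefore makes \(H\langle z\rangle\) flat on both sides over \(H[z]\). Transitivity of flatness then gives flatness over \(H\). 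The identification \(\widehat G_{\mathrm J,s}\langle z\rangle=H\langle z\rangle e_s\) follows by applying the idempotent levelwise before taking the limit. This module is a direct summand of \(H\langle z\rangle\) as a left \(H\)-module, so it is flat.

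\emph{Injectivity for finitely presented \(T\).} First let \(T\) be a finitely presented right \(H\)-module. The tensor product commutes with the finite presentation, and completion is exact on finite right \(H[z]\)-modules by \Cref{flatness of central completion}. Together these identify \(T\otimes_HH\langle z\rangle\) with
\[
 T\langle z\rangle=\varprojlim_a(T/p^aT)[z].
\]
This is the module of series \(\sum t_kz^k\) whose coefficients tend to zero \(p\)-adically in \(T\). It sits inside \(T[[z]]\) because \(T\) is \(p\)-adically separated, being finite over the Noetherian complete ring \(H\). So \eqref{interval-series-injection} holds for finitely presented \(T\).

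\emph{General \(T\).} Write \(T=\varinjlim T_\lambda\) with each \(T_\lambda\) finitely presented. Tensor products commute with filtered colimits, so an element of the kernel of \eqref{interval-series-injection} comes from some \(x\in T_\lambda\langle z\rangle\) whose coefficients all lie in \(K=\ker(T_\lambda\to T)\). Because \(H\) is right Noetherian, \(K\) is finitely generated.

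The key step is to show that \(x\) lies in the image of \(K\otimes_HH\langle z\rangle=K\langle z\rangle\). This requires the coefficients of \(x\) to converge to zero in the intrinsic \(p\)-adic topology of \(K\), not merely in the topology induced from \(T_\lambda\). Here I will use the Artin--Rees cofinality established in the proof of \Cref{flatness of central completion}: there is a \(c\) with
\[
 K\cap p^{a+c}T_\lambda\subseteq p^aK \quad\text{for all } a.
\]
This converts convergence in \(T_\lambda\) into convergence in \(K\). Hence \(x\) is the image of an element of \(K\otimes_HH\langle z\rangle\). Since the composite \(K\to T_\lambda\to T\) is zero, the image of \(x\) in \(T\otimes_HH\langle z\rangle\) vanishes. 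This proves injectivity in general.

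I expect this last step to be the main obstacle: identifying \(K\langle z\rangle\) with \(T_\lambda\langle z\rangle\cap K[[z]]\). It is the only place where the Noetherian and Artin--Rees input is genuinely needed, and it is what separates restricted series from arbitrary power series.
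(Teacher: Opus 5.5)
Your proof is correct. Its flatness half is the paper's argument: complete the Noetherian ring \(H[z]\), which is free over \(H\), and use transitivity.

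The routes differ in how injectivity is reduced to finite modules. The paper places the finitely many \(T\)-coordinates of a tensor in a finitely generated \emph{submodule} \(T_0\subseteq T\). This submodule is finitely presented because \(H\) is right Noetherian, so the composite \(T_0\otimes_HH\langle z\rangle\cong\varprojlim_a(T_0/p^aT_0)[z]\hookrightarrow T_0[[z]]\hookrightarrow T[[z]]\) is injective. A tensor with zero image in \(T[[z]]\) therefore lifts to an element of \(T_0\otimes_HH\langle z\rangle\) that must vanish.

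You instead take an arbitrary filtered system of finitely presented \(T_\lambda\) whose maps to \(T\) need not be injective. This creates the kernel \(K\) and forces the Artin--Rees argument identifying \(T_\lambda\langle z\rangle\cap K[[z]]\) with \(K\langle z\rangle\). That argument is valid: the cofinality \(K\cap p^{a+c}T_\lambda\subseteq p^aK\) is exactly what the Rees-module proof of the completion lemma provides, and \(K\) is finitely generated by right Noetherianity.

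However, the step you single out as the main obstacle is avoidable. Over a right Noetherian ring you may take the \(T_\lambda\) to be the finitely generated submodules of \(T\), and then \(K=0\). Your route gains no extra generality in exchange, since the finitely presented case already uses Noetherianity through the exactness of completion. The paper's route is simply shorter, and it confines all Artin--Rees input to the completion lemma.
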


\begin{proof}
Apply \Cref{flatness of central completion} to the Noetherian
ring \(H[z]\), which is free over \(H\). For injectivity,
contain the finitely many \(T\)-coordinates of a tensor in a finite
submodule \(T_0\subseteq T\). Flatness preserves this inclusion.
The finite-module completion formula identifies
\(T_0\otimes_HH\langle z\rangle\) with
\(\varprojlim_a(T_0/p^aT_0)[z]\). Since finite modules over the
complete Noetherian ring \(H\) are complete and separated, this
is the module of series whose coefficients tend to zero in \(T_0\).
It embeds in \(T_0[[z]]\), hence in \(T[[z]]\).
\end{proof}

\begin{proposition}\label{interval-right-flat-resolution}
For \(\mathrm J=[m,n+1]\) and \(r\in\I\), there is a flat
resolution over \(\widehat B_{\mathrm J}\)
\begin{equation}\label{interval-right-resolution-sequence}
 0\longrightarrow
 \widehat G_{\mathrm J,m}\oplus
       \widehat G_{\mathrm J,m}\langle z\rangle
 \xrightarrow{d_r}
 \widehat G_{\mathrm J,r}\oplus
       \widehat G_{\mathrm J,n}\langle z\rangle
 \xrightarrow{\epsilon_r}\widehat G_{\I,r}\longrightarrow0,
\end{equation}
where
\[
\begin{aligned}
 \epsilon_r\left(a,\sum_{j\geq0}a_jz^j\right)
 &=a\alpha_r+\sum_{j\geq0}a_j
   (\tau_\I^{-1}\tV^{n-m})^j
           \tau_\I^{-1}\tV^{r-m}\alpha_r,\\
 d_r(u,q(z))
 &=\bigl(-u\tV^{r-m},\,
       u\tau_{\mathrm J}\tF+
       q(z)(\tV^{n-m}-\tau_{\mathrm J}\tF z)\bigr).
\end{aligned}
\]
Here \(\tF:n\to n+1\) is the added arrow, and the series
coefficients tend to zero \(p\)-adically.
\end{proposition}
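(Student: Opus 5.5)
The proof has three parts: flatness of the terms, well-definedness of the two maps, and exactness. Exactness is checked in each vertex degree through path normal forms.

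\emph{Flatness.} By \Cref{interval-flat-completion}, \(\widehat G_{\mathrm J,m}\langle z\rangle\) and \(\widehat G_{\mathrm J,n}\langle z\rangle\) are flat left \(\widehat B_{\mathrm J}\)-modules. The summands \(\widehat G_{\mathrm J,m}\) and \(\widehat G_{\mathrm J,r}\) are projective.

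\emph{Well-definedness.} We verify that \(\epsilon_r\) and \(d_r\) are \(\widehat B_{\mathrm J}\)-linear and that \(\epsilon_r d_r=0\). Linearity is clear, since both maps are right multiplications by fixed elements, extended \(p\)-adically in \(z\). For \(\epsilon_r\) to make sense, the series must converge. Its \(j\)-th term involves \((\tau_\I^{-1}\tV^{n-m})^j\), and we must check that this lies in \(p^{\,j}\widehat G_{\I,r}\) after transport to level \(\mathrm J\), or at least that the coefficients \(a_j\to0\) give convergence. Since \(a_j\to 0\) by hypothesis, convergence holds in the \(p\)-adically complete \(\widehat G_{\I,r}\).

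For \(\epsilon_rd_r=0\) we use the relations of \(E_{\mathrm J\I}\). On the new edge, \(\tF_{n\to n+1}\) acts as the identity, and \(\tau_{\mathrm J}\) is inherited from \(\tau_\I\). Hence \(\tau_{\mathrm J}\tF\) acts on the image as \(\tau_\I\) from degree \(n\) to degree \(m\). Substituting, the first coordinate contributes \(-u\tV^{r-m}\alpha_r\). The term \(u\tau_{\mathrm J}\tF\) contributes \(u\tau_\I\tau_\I^{-1}\tV^{r-m}\alpha_r\), which cancels it. The \(q(z)\) term telescopes: \(\tV^{n-m}\) times the \(j\)-th term equals \(\tau_\I\) times the \((j+1)\)-st term. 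This is precisely the geometric-series identity \((x-\tau z)\sum_j(\tau^{-1}x)^jz^j\tau^{-1}y=\ldots\), and it vanishes after evaluating \(z\) appropriately.

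\emph{Surjectivity of \(\epsilon_r\).} In gauge degrees \(i\in\I\), the element \(\alpha_r\) generates \(\widehat G_{\I,r}\) over \(\widehat A_\I\). The only new operators at level \(\mathrm J\) are the paths through vertex \(n+1\). These are accounted for by the \(z\)-summand, which produces the backward string \(\tau_\I^{-1}\tV\) repeatedly. That string is exactly the backward string in the normal form of \(G_{\I,r}\) which at level \(\mathrm J\) is no longer reachable from \(\alpha_r\) as a finite path. In other words, at level \(\mathrm J\) the old backward string factors through \(\tV\) on the new edge, which is \(p\) rather than \(1\).

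\emph{Exactness and injectivity of \(d_r\).} This is the main obstacle. I would prove both by reduction to the series embedding \eqref{interval-series-injection}: map everything into \(T[[z]]\)-coordinates, where \(d_r\) becomes multiplication by \(\tV^{n-m}-\tau_{\mathrm J}\tF z\) in the second coordinate. Its constant term \(\tV^{n-m}\) is injective on the torsion-free path module, so a kernel element must have \(q=0\), and then \(u\tV^{r-m}=0\) forces \(u=0\).

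For exactness in the middle, I would solve \(\epsilon_r(a,q)=0\) degree by degree in \(z\). This uses the normal form of \(G_{\mathrm J}\) to split each coefficient into forward and backward strings. We then show that the relation is generated by the two displayed relations: \(\tV^{r-m}\) versus \(\tau_{\mathrm J}\tF\), and \(\tV^{n-m}\) versus \(\tau_{\mathrm J}\tF z\). This is a Laurent-lattice computation as in \Cref{lowering the rank}, with \(\Lambda_b\) coordinates.

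Finally, I would pass from the uncompleted statement to the completed one. The uncompleted statement is computed modulo \(p^a\) on finite pieces, and we pass to the limit using \Cref{flatness of central completion} and the Mittag-Leffler property of the surjective transition maps.
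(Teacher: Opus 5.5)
Your frame matches the paper's: an uncompleted polynomial version first, path normal forms, coefficientwise injectivity, then completion, with flatness from \Cref{interval-flat-completion}. The gap is the step you yourself call the main obstacle, exactness at the middle term, which is not actually proved.

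Solving $\epsilon_r(a,q)=0$ ``degree by degree in $z$'' does not work as stated. The map $\epsilon_r$ is not homogeneous in $z$: it substitutes $z\mapsto\tau_\I^{-1}\tV^{n-m}$. The image of $d_r$ also mixes adjacent $z$-degrees through the factor $\tV^{n-m}-\tau_{\mathrm J}\tF z$. Saying that the relations are generated by the two displayed ones only restates the claim, and you do not say how $z$ would enter a $\Lambda_b$-lattice computation.

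The missing idea is to compute the cokernel of $d_r$ rather than the kernel of $\epsilon_r$. Write $\beta_r$ and $z^j\beta_n$ for the generators of the middle term. The relations are $(\tau_{\mathrm J}\tF)\beta_n=\tV^{r-m}\beta_r$ and $(\tau_{\mathrm J}\tF)z^{j+1}\beta_n=\tV^{n-m}z^j\beta_n$. Modulo the image they give $(\tV\tau_{\mathrm J}^{-1})\tV^{r-m}\beta_r=p\beta_n$ and $(\tV\tau_{\mathrm J}^{-1})\tV^{n-m}z^j\beta_n=pz^{j+1}\beta_n$.
\begin{itemize}
\item Use these to remove backward crossings of the new vertex.
\item Use the original relations to rewrite forward paths on $z^j\beta_n$, which begin with $\tau_{\mathrm J}\tF$.
\item Conclude that at each vertex $i$ the cokernel is spanned by the forward paths on $\beta_r$, by $\tV^{r-i}\beta_r$ when $i<r$, and by $\tV^{n-i}z^j\beta_n$ for $j\geq0$.
\end{itemize}
The augmentation sends this spanning set bijectively onto the normal-form $W$-basis of $G_{\I,r}^i$. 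A spanning set mapping onto a basis is itself a basis, so $\operatorname{coker}d_r\to G_{\I,r}$ is an isomorphism. This gives surjectivity and middle exactness at once, and replaces your separate generation argument.

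There are two smaller problems.
\begin{itemize}
\item Your injectivity argument runs in the wrong order. The constant term of the second coordinate is $u\tau_{\mathrm J}\tF+q_0\tV^{n-m}$, so you cannot conclude $q=0$ first. Instead, the first coordinate alone gives $u\tV^{r-m}=0$. The opposite path then gives $p^{r-m}u=0$, so $u=0$, and only then does the coefficientwise argument kill $q$.
\item For the passage to completions, \Cref{flatness of central completion} does not apply directly. The maps $d_r$ and $\epsilon_r$ are not $z$-linear, and the polynomial terms are not finite over $B_{\mathrm J}$. What you need is that the cokernel $G_{\I,r}$ is $p$-torsion-free, so the sequence stays exact modulo every $p^a$. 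Surjectivity of the left transition maps then makes the inverse limit exact.
\end{itemize}
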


\begin{proof}
First use polynomials and uncompleted generators. Denote the
generators of the middle summands by \(\beta_r\) and
\(z^j\beta_n\), respectively. The displayed augmentation sends
them to \(\alpha_r\) and
\((\tau_\I^{-1}\tV^{n-m})^j
 \tau_\I^{-1}\tV^{r-m}\alpha_r\). The relations imposed by
\(d_r\) are
\[
 (\tau_{\mathrm J}\tF)\beta_n=\tV^{r-m}\beta_r,
 \qquad
 (\tau_{\mathrm J}\tF)z^{j+1}\beta_n
       =\tV^{n-m}z^j\beta_n.
\]
At vertex \(i\), cancel opposite arrows in any path. The resulting
cokernel is spanned by forward paths on \(\beta_r\), the short
backward path on \(\beta_r\) when \(i<r\), and
\(\tV^{n-i}z^j\beta_n\) for \(j\geq0\). To see that these
span, a backward boundary crossing is replaced using
\((\tV\tau_{\mathrm J}^{-1})\tV^{r-m}\beta_r=p\beta_n\),
and a full backward cycle by
\((\tV\tau_{\mathrm J}^{-1})\tV^{n-m}z^j\beta_n
 =pz^{j+1}\beta_n\). A forward path on \(z^j\beta_n\)
starts with \(\tau_{\mathrm J}\tF\) and reduces by the displayed
relations. Under the augmentation these are precisely the distinct
reduced paths of \(G_{\I,r}^i\). Thus the cokernel is
\(G_{\I,r}\), and is \(p\)-torsion-free.

If \(d_r(u,q)=0\), the first component and the opposite path give
\(p^{r-m}u=0\), so \(u=0\). Write \(q=\sum q_jz^j\).
Its constant coefficient gives \(q_0\tV^{n-m}=0\); the opposite
path gives \(q_0=0\). Successive coefficient equations then give
\(q_j=0\) for all \(j\). This proves the polynomial sequence
is exact. Its \(p\)-torsion-free cokernel makes reduction modulo
\(p^a\) exact; inverse limits preserve exactness because the left
transition maps are surjective. This is the stated completed
sequence. Flatness follows from \Cref{interval-flat-completion}.
\end{proof}

\begin{proposition}\label{interval-left-flat-resolution}
For \(\mathrm J=[m-1,n]\) and \(r\in\I\), there is a flat
resolution over \(\widehat B_{\mathrm J}\)
\begin{equation}\label{interval-left-resolution-sequence}
 0\longrightarrow
 \widehat G_{\mathrm J,n}\oplus
       \widehat G_{\mathrm J,n}\langle z\rangle
 \xrightarrow{d_r}
 \widehat G_{\mathrm J,r}\oplus
       \widehat G_{\mathrm J,m}\langle z\rangle
 \xrightarrow{\epsilon_r}\widehat G_{\I,r}\longrightarrow0,
\end{equation}
where
\[
\begin{aligned}
 \epsilon_r\left(a,\sum_{j\geq0}a_jz^j\right)
 &=a\alpha_r+\sum_{j\geq0}a_j
   (\tau_\I\tF^{n-m})^j\tau_\I\tF^{n-r}\alpha_r,\\
 d_r(u,q(z))
 &=\bigl(-u\tF^{n-r},\,
       u\tau_{\mathrm J}^{-1}\tV+
       q(z)(\tF^{n-m}-\tau_{\mathrm J}^{-1}\tV z)\bigr).
\end{aligned}
\]
Here \(\tV:m\to m-1\) is the added arrow.
\end{proposition}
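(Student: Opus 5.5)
The plan is to mirror the proof of \Cref{interval-right-flat-resolution}, exchanging the roles of \(\tF\) and \(\tV\) and of the two endpoints. First I work with polynomials in \(z\) and the uncompleted generators over \(B_{\mathrm J}\). Denote the generators of the middle summands by \(\beta_r\) and \(z^j\beta_m\). The augmentation sends them to \(\alpha_r\) and to \((\tau_\I\tF^{n-m})^j\tau_\I\tF^{n-r}\alpha_r\), respectively. A direct check with the gauge relations at level \(\I\) shows that \(\epsilon_r\circ d_r=0\). The key points are that the added arrow \(\tV:m\to m-1\) followed by \(\tau_{\mathrm J}^{-1}\) returns to \(n\), and that after restricting to \(\I\) the composite \(\tau_{\mathrm J}^{-1}\tV\) agrees with \(\tau_\I^{-1}\), since the new edge has \((\tF,\tV)=(p,1)\). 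The relations imposed by \(d_r\) then read
\[
 (\tau_{\mathrm J}^{-1}\tV)\beta_m=\tF^{n-r}\beta_r,\qquad
 (\tau_{\mathrm J}^{-1}\tV)z^{j+1}\beta_m=\tF^{n-m}z^j\beta_m .
\]

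Next I identify the cokernel with \(G_{\I,r}\) vertex by vertex, using the path normal form. At a vertex \(i\), I cancel opposite arrows. Any forward boundary crossing from \(n\) through \(\tau_{\mathrm J}\) is rewritten with the identity \((\tF\tau_{\mathrm J})\tF^{n-r}\beta_r=p\beta_m\), or with its analogue \((\tF\tau_{\mathrm J})\tF^{n-m}z^j\beta_m=pz^{j+1}\beta_m\). Both identities follow because \(\tF\) on the new edge composes with \(\tV\) to give \(p\). Backward paths on \(z^j\beta_m\) begin with \(\tau_{\mathrm J}^{-1}\tV\) and are reduced by the displayed relations. What remains is a spanning set consisting of the backward paths on \(\beta_r\), the short forward path on \(\beta_r\) when \(i>r\), and the elements \(\tF^{i-m}z^j\beta_m\). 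Under \(\epsilon_r\) these go bijectively to the reduced forward and backward strings of \(G_{\I,r}^i\): the \(z^j\)-family accounts for the full forward cycles. Since these strings form a \(W\)-basis, the spanning set is a basis of the cokernel, the cokernel is \(G_{\I,r}\), and it is \(p\)-torsion-free.

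For injectivity of \(d_r\), suppose \(d_r(u,q)=0\). The first component gives \(u\tF^{n-r}=0\). Composing with the opposite path \(\tV^{n-r}\) yields \(p^{n-r}u=0\), and torsion-freeness of the path basis gives \(u=0\). Writing \(q=\sum q_jz^j\), the constant coefficient gives \(q_0\tF^{n-m}=0\), hence \(q_0=0\). Comparing successive coefficients, \(q_{j+1}\tF^{n-m}=q_j\tau_{\mathrm J}^{-1}\tV\), then forces \(q_j=0\) for all \(j\) by induction.

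Finally I complete. Because the cokernel is \(p\)-torsion-free, reduction modulo \(p^a\) of the polynomial sequence stays exact. The left transition maps are surjective, so the inverse limit stays exact, which gives \eqref{interval-left-resolution-sequence}. Flatness of the terms is \Cref{interval-flat-completion}. I expect the main obstacle to be the normal-form spanning argument. There one must check carefully that the \(p\) factors from the new edge \((\tF,\tV)=(p,1)\) land on the forward side, which is the opposite of the right enlargement. One must also check that the reduced spanning set has no redundancy at the vertex \(m\) adjacent to the new edge.
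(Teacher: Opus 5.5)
Your proposal is correct and follows the paper's proof essentially step for step. It uses the same two relations, the same normal-form basis (backward paths on \(\beta_r\), the short forward path for \(i>r\), and \(\tF^{i-m}z^j\beta_m\)) matched against the reduced paths of \(G_{\I,r}\), injectivity via the opposite paths and successive coefficients, and the \(p\)-torsion-free reduction and inverse-limit step. You also make explicit two things the paper leaves inside ``the path argument'': the check \(\epsilon_r\circ d_r=0\), and the rewriting identities \((\tF\tau_{\mathrm J})\tF^{n-r}\beta_r=p\beta_m\) and \((\tF\tau_{\mathrm J})\tF^{n-m}z^j\beta_m=pz^{j+1}\beta_m\).
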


\begin{proof}
Before completion, the relations are
\((\tau_{\mathrm J}^{-1}\tV)\beta_m=\tF^{n-r}\beta_r\) and
\((\tau_{\mathrm J}^{-1}\tV)z^{j+1}\beta_m
 =\tF^{n-m}z^j\beta_m\). The path argument now gives the
basis consisting of backward paths on \(\beta_r\), the short
forward path when \(i>r\), and \(\tF^{i-m}z^j\beta_m\).
Their augmentations are the reduced paths of \(G_{\I,r}\).
For injectivity, use the opposite paths to
\(\tF^{n-r}\) and \(\tF^{n-m}\), then compare coefficients
successively. The cokernel is again \(p\)-torsion-free, so the
same reduction and inverse-limit argument proves the result.
\end{proof}

These formulas include \(m=n\): the empty paths are identities,
and cancellation gives the localization resolution with differential
\(\cdot(1-\tau_{\mathrm J}\tF z)\), or
\(\cdot(1-\tau_{\mathrm J}^{-1}\tV z)\), respectively.

\begin{corollary}\label{interval-old-generators-acyclic}
For either elementary enlargement, and any \(p\)-torsion-free
right \(\widehat B_{\mathrm J}\)-module \(T\),
\[
 \operatorname{Tor}^{\widehat B_{\mathrm J}}_a
       (T,\widehat G_{\I,r})=0\qquad(a>0).
\]
In particular, the old completed generators are
\(\widehat\rmt_{\mathrm J}\)-acyclic.
\end{corollary}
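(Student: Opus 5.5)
The plan is to compute the Tor groups directly from the flat resolutions of \Cref{interval-right-flat-resolution} and \Cref{interval-left-flat-resolution}. Since those resolutions have length one and flat terms, \(\operatorname{Tor}_a\) vanishes for \(a\geq2\). It remains to show that \(\operatorname{Tor}_1(T,\widehat G_{\I,r})\) vanishes, that is, that \(1_T\otimes d_r\) is injective. Treat the right enlargement \(\mathrm J=[m,n+1]\); the left one is symmetric, with the roles of \(\tF,\tV\) and \(\tau,\tau^{-1}\) exchanged.

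The first step is to tensor \eqref{interval-right-resolution-sequence} with \(T\). The Morita corner identifications give \(T\otimes\widehat G_{\mathrm J,s}=Te_s\). By \eqref{interval-series-injection}, the \(\langle z\rangle\)-terms embed into \((Te_m)[[z]]\) and \((Te_n)[[z]]\). Because these embeddings are compatible with \(1\otimes d_r\), which acts coefficientwise in \(z\), it suffices to prove injectivity of the induced map
\[
 Te_m\oplus(Te_m)[[z]]\longrightarrow Te_r\oplus(Te_n)[[z]],\qquad
 (u,q)\mapsto\bigl(-u\tV^{r-m},\,u\tau_{\mathrm J}\tF+q(\tV^{n-m}-\tau_{\mathrm J}\tF z)\bigr).
\]

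The second step mimics the injectivity argument in the proof of \Cref{interval-right-flat-resolution}, now with coefficients in \(T\). Suppose \((u,q)\) maps to zero. The first component gives \(u\tV^{r-m}=0\). Right multiplication by the opposite path \(\tF^{r-m}\), which in \(B_{\mathrm J}\) satisfies \(\tV^{r-m}\tF^{r-m}=p^{r-m}e_m\), yields \(p^{r-m}u=0\). Since \(T\) is \(p\)-torsion-free, \(u=0\). Comparing the \(z^0\)-coefficient of the second component gives \(q_0\tV^{n-m}=0\), and the opposite path again forces \(p^{n-m}q_0=0\), so \(q_0=0\). The \(z^{j+1}\)-coefficient reads \(q_{j+1}\tV^{n-m}=q_j\tau_{\mathrm J}\tF\). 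Induction on \(j\) then kills every \(q_j\). This establishes injectivity and hence the vanishing of \(\operatorname{Tor}_1\).

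The main care point is the bookkeeping in these identifications: the opposite-path products must be computed in \(B_{\mathrm J}\), and the \(z\)-coefficients of \(1\otimes d_r\) must match the displayed formula. The series injection of \Cref{interval-flat-completion} is what makes coefficient comparison legitimate after completion, since \(T\) need not be finite.

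For the final assertion, \eqref{widehat-rmt-tensor-product} at level \(\mathrm J\) gives \(\rmL\widehat\rmt_{\mathrm J}(\widehat G_{\I,r})=\widehat\rmt_{\mathrm J}(\widehat B_{\mathrm J})\otimes^{\rmL}\widehat G_{\I,r}\), after transporting through the Morita equivalence. Each term of \(\widehat\rmt_{\mathrm J}(\widehat A_{\mathrm J})\) is, as a right module, a sum of components of the completed generator complexes. By \Cref{description of t(G_r)} and the coefficient maps \(\widehat\rho\), these are sums of copies of \(\widehat R^{\,0}\), \(\widehat R^{\,1}\) and \(\widehat R^{\mathrm{top}}\)-type modules. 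Each of these is \(p\)-torsion-free, since the normal forms are \(p\)-torsion-free and completion preserves this. The first part therefore applies termwise, giving the higher-Tor vanishing, and so \(\widehat G_{\I,r}\) is \(\widehat\rmt_{\mathrm J}\)-acyclic.
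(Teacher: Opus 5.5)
Your proposal is correct and follows essentially the same route as the paper. Higher Tor vanishes by the length-one flat resolutions. Injectivity of \(1_T\otimes d_r\) is checked after embedding the series summand in \((Te_s)[[z]]\) via \eqref{interval-series-injection}: the opposite paths and \(p\)-torsion-freeness kill \(u\), then \(q_0\), then the successive \(q_j\). Acyclicity then follows by applying this to the \(p\)-torsion-free bidegree components of the coefficient bimodule in \eqref{widehat-rmt-tensor-product}.
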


\begin{proof}
The flat resolutions leave only \(\operatorname{Tor}_1\) to check.
Tensoring a source generator gives
\(T\otimes\widehat G_{\mathrm J,s}=Te_s\); its series summand
embeds in \((Te_s)[[z]]\) by \eqref{interval-series-injection}.
A kernel element can thus be written \((u,\sum q_jz^j)\).
The first component and the opposite path give a power of \(p\)
times \(u\), so \(u=0\). The same argument applied first to
\(q_0\), then to successive coefficients, gives \(q_j=0\).

Apply this to each bidegree of the right coefficient module in
\eqref{widehat-rmt-tensor-product}. By the completed generator
calculation, its idempotent components are copies of
\(\widehat R^{\,0}\), \(\widehat R^{\,1}\), or
\(\widehat R^{\mathrm{top}}\), all \(p\)-torsion-free.
The Tor vanishing is therefore the asserted acyclicity with values
in actual \(R\)-complexes.
\end{proof}

\subsection{Comparison after the first reduction}

The coefficient calculation simplifies after reduction:
\begin{equation}\label{interval-r1-completed-coefficients}
 R_1\otimes_R^{\rmL}\widehat R=R_1,
 \qquad
 R_1\otimes_R^{\rmL}\widehat R^{\mathrm{top}}
       =k_\sigma[F,F^{-1}],
\end{equation}
with no higher Tor. These identities follow from the definitions
and the resolution \eqref{paper:R1-resolution}. On \(R_1\),
right multiplication by \(V\) is locally nilpotent: it kills
\(R_1^0=k_\sigma[F]\), and sends \(F^jd\) to
\(F^{j-1}d\) for \(j>0\), and \(d\) to zero.
Right multiplication by \(F\) kills \(R_1^1\). Consequently
\begin{equation}\label{interval-r1-polynomial-operators}
 0\longrightarrow R_1[z]
 \xrightarrow{\cdot(1-Fz)}R_1[z]
 \longrightarrow k_\sigma[F,F^{-1}]\longrightarrow0
\end{equation}
is exact, with last map \(\sum x_jz^j\mapsto\sum x_j^0F^{-j}\),
and \(\cdot(1-Vz)\) is invertible on \(R_1[z]\).
The first assertion is the polynomial presentation of right
\(F\)-localization. The inverse in the second is the geometric
sum, which is finite on each polynomial by local nilpotence.

We will also use polynomial division in the following form. If
\(b,b_r\) are commuting right endomorphisms of an abelian group
\(S\), then
\begin{equation}\label{interval-r1-polynomial-elimination}
 0\longrightarrow S\oplus S[z]
 \xrightarrow{(u,q)\mapsto(-ub_r,u+q(b-z))}
 S\oplus S[z]
 \xrightarrow{(x,y)\mapsto x+y(b)b_r}S\longrightarrow0
\end{equation}
is exact. The coefficient of the highest power of \(z\) proves
injectivity; division by the monic linear operator \(b-z\) proves
exactness in the middle. In particular, the first summand maps
identically to the cokernel.

\begin{proposition}\label{interval-r1-generator-comparison}
For either elementary enlargement and every inner \(R\)-complex
degree \(s\), the natural comparison gives
\[
 R_1\otimes_R^{\rmL}
    \widehat\rmt_{\mathrm J}(\widehat G_{\I,r})^s
 \xrightarrow{\sim}
 R_1\otimes_R^{\rmL}
    \widehat\rmt_\I(\widehat G_{\I,r})^s.
\]
Both sides have no higher Tor. Their common value is
\[
 \begin{cases}
 R_1(-i),&s=-i,\quad m\leq i<n,\\
 k_\sigma[F,F^{-1}](-n),&s=-n,\\
 0,&\text{otherwise}.
 \end{cases}
\]
The isomorphisms commute with the inner differential and are natural
in morphisms between finite sums of completed generators.
\end{proposition}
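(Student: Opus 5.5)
The plan is to compute both sides termwise: the level-\(\I\) side directly, and the level-\(\mathrm J\) side through the flat resolutions of \Cref{interval-right-flat-resolution} and \Cref{interval-left-flat-resolution}. For the right-hand side, \Cref{description of t(G_r)} and \Cref{derived widehat t equals widehat derived t} give \(\widehat\rmt_\I(\widehat G_{\I,r})^{-i}=\widehat R(-i)\) for \(m\le i<n\), and \(\widehat R^{\mathrm{top}}(-n)\) at \(i=n\). Then \eqref{interval-r1-completed-coefficients} gives the displayed common value, with no higher Tor.

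For the left-hand side, \Cref{interval-old-generators-acyclic} says \(\widehat G_{\I,r}\) is \(\widehat\rmt_{\mathrm J}\)-acyclic. So \(\widehat\rmt_{\mathrm J}(\widehat G_{\I,r})^s\) is the cokernel of \(\widehat\rmt_{\mathrm J}(d_r)^s\), and this map is injective. Its source and target are computed from \(\widehat\rmt_{\mathrm J}(\widehat G_{\mathrm J,t})^s\), which are again \(\widehat R\), \(\widehat R^{\mathrm{top}}\), or zero. The series summands become \(\widehat R\langle z\rangle\)-type modules, which are flat by \Cref{interval-flat-completion}. We therefore get a two-term resolution of \(\widehat\rmt_{\mathrm J}(\widehat G_{\I,r})^s\) by modules with no higher \(R_1\)-Tor. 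Applying \(R_1\otimes^{\rmL}_R\) termwise, and using \eqref{interval-r1-completed-coefficients} together with the completion \(R_1\langle z\rangle=R_1[z]\) (since \(p\) kills \(R_1\)), we obtain a two-term complex of \(R_1[z]\)-modules. The comparison is then a matter of identifying this complex case by case.

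Fix the enlargement \(\mathrm J=[m,n+1]\) and an inner degree \(s=-i\). I evaluate the coefficients \(\rho_i\) of the paths appearing in \(d_r\): \(\tV^{r-m}\), \(\tau_{\mathrm J}\tF\), and \(\tV^{n-m}-\tau_{\mathrm J}\tF z\).

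In the new top degree \(i=n+1\), the coefficient of the added \(\tF\) is \(F\) and the other paths become units or powers of \(p\). The reduced complex is then \eqref{interval-r1-polynomial-operators}, whose cokernel is \(k_\sigma[F,F^{-1}](-n)\) in the old top degree. I need to check this carefully, since the top moves from \(n\) to \(n+1\), and the comparison map \(\alpha\) kills \(\beta_{n+1}\) and sends the tail symbols to \(F^{\pm}\)-multiples.

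In the degrees \(i<n\), the operator \(\tV^{n-m}-\tau\tF z\) reduces to \(b-z\)-type or \(1-Vz\)-type operators. Depending on the position of \(i\) relative to \(r\), I use one of two tools. Either the operator is invertible on \(R_1[z]\) by local nilpotence of \(V\), so the series summand cancels. Or \eqref{interval-r1-polynomial-elimination} applies with \(b,b_r\) the reduced coefficients, so the cokernel is the first summand \(R_1(-i)\), mapped identically. In both cases the result maps isomorphically to \(R_1(-i)\) via \(\epsilon_r\), and there is no higher Tor because the reduced differential is injective.

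The left enlargement \([m-1,n]\) is handled symmetrically: \(\tV\) and \(\tF\) are interchanged, and the new bottom degree \(m-1\) gives zero, since \(1-Vz\) is invertible there.

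Compatibility with the inner differential follows because the inner differential of \(\widehat\rmt_{\mathrm J}\) is left multiplication by \(d\) or \(dV\). Left multiplication commutes with the right-module resolution maps, and \(\alpha\) is defined symbolwise. Naturality in maps between finite sums of generators follows from the naturality of \(\alpha_{\mathrm J\I}\) and from the fact that the derived reduction is computed functorially on the acyclic objects.

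The main obstacle is the bookkeeping of the coefficients \(\widehat\rho_i\) of \(d_r\) in each range of \(i\): which entries become \(1\), which become \(p\) (and hence \(0\) mod \(R_1\)), and which become \(F\), \(V\), or \(dV\). The particular concern is the wall degrees \(i=r\) and \(i=n\), where I expect the exactness to switch between the localization sequence and the polynomial-elimination sequence. I would first tabulate these coefficients for \(\mathrm J=[m,n+1]\) and then transpose the table for the other enlargement.
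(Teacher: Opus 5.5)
Your strategy is the paper's: take the cokernel of $\widehat\rmt_{\mathrm J}(d_r)$, reduce the series summands to polynomial modules, and identify each inner degree using \eqref{interval-r1-polynomial-operators} or \eqref{interval-r1-polynomial-elimination}. But the degree-by-degree identification, which is the substance of the proposition, goes wrong at the top of the right enlargement.

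\emph{The error at $i=n+1$.} In inner degree $-(n+1)$ the added arrow $\tF\colon n\to n+1$ has coefficient $p$, not $F$; it has coefficient $F$ only in degree $n$. The factor $F$ comes from $\tau_{\mathrm J}$, so $\tau_{\mathrm J}\tF\mapsto pF\equiv0$, while $\tV^{r-m}$ and $\tV^{n-m}$ both map to $1$. The reduced map is therefore $(u,q)\mapsto(-u,q)$, an isomorphism. This degree contributes $0$, as the statement requires. With your coefficient you would instead obtain a nonzero $k_\sigma[F,F^{-1}]$ in inner degree $-(n+1)$. That cokernel cannot be moved to ``the old top degree'', because each computation takes place in one fixed inner degree $s=-i$.

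\emph{The missing case $i=n$.} The localization sequence belongs in degree $i=n$, which your plan never computes. There the coefficients of $(\tV^{r-m},\tV^{n-m},\tau_{\mathrm J}\tF)$ are $(1,1,F)$, so the reduced map on $R_1\oplus R_1[z]$ is $(u,q)\mapsto(-u,uF+q(1-Fz))$. You cancel the $u$-summand using the unit entry and then apply \eqref{interval-r1-polynomial-operators}. Tracking the first summand shows that the induced map is the natural comparison, not merely an abstract isomorphism.

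\emph{Smaller corrections.}
\begin{enumerate}
\item For $m\le i<n$ you always have $\tau_{\mathrm J}\tF\mapsto1$. So \eqref{interval-r1-polynomial-elimination} always applies, with $b$ the reduction of $p^{n-i-1}V$ and $b_r$ the reduction of $\tV^{r-m}$. The invertible operator $1-Vz$ occurs only in the new degree $m-1$ of the left enlargement.
\item The left enlargement is not a mere transposition of the right one. At its old top the entries are $p^{n-r}$, $p^{n-m}$ and $F^{-1}$. You must right-multiply the second component by the invertible $F$ of $k_\sigma[F,F^{-1}]$ to reach the form $(u,q)\mapsto(-up^{n-r},u+q(p^{n-m}F-z))$ of \eqref{interval-r1-polynomial-elimination}. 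No localization sequence appears there.
\item The flatness of the series summands given by \Cref{interval-flat-completion} is over $\widehat B_{\mathrm J}$, not over $R$. To conclude that there is no higher $R_1$-Tor, first apply the free resolution \eqref{paper:R1-resolution} to the coefficient bimodule in \eqref{widehat-rmt-tensor-product}, and only then tensor with the flat series summand.
\end{enumerate}
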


\begin{proof}
Apply \(\widehat\rmt_{\mathrm J}\) to the two-term flat
resolution. By \Cref{interval-old-generators-acyclic} its cokernel
is the underived image. In inner degree \(-i\), ordinary generator
summands reduce to \(R_1(-i)\), or to
\(k_\sigma[F,F^{-1}](-i)\) at the top; series summands reduce to
the corresponding polynomial modules. To justify the latter claim,
use \eqref{widehat-rmt-tensor-product}: the series summand is flat
over the completed gauge algebra, so the finite right free
\(R\)-resolution of \(R_1\) can first be applied to the coefficient
bimodule. Its reduction has no higher cohomology by
\eqref{interval-r1-completed-coefficients} and is killed by \(p\).
Tensoring the series summand therefore replaces it by its quotient
modulo \(p\), which is a polynomial module. This also respects
the path actions.

For the right enlargement, the reduced matrix is
\[
 (u,q)\longmapsto
 \bigl(-u\tV^{r-m},\,
 u\tau_{\mathrm J}\tF+
 q(\tV^{n-m}-\tau_{\mathrm J}\tF z)\bigr).
\]
The paths \(\tV^{r-m}:r\to m\), \(\tV^{n-m}:n\to m\),
and \(\tau_{\mathrm J}\tF:n\to m\) have the following
coefficient images under \(\rho_i\):
\[
\begin{array}{c|c|c|c}
 i&\tV^{r-m}&\tV^{n-m}&\tau_{\mathrm J}\tF\\\hline
 m\leq i<n&
 \begin{cases}1&r\leq i,\\p^{r-i-1}V&r>i\end{cases}
 &p^{n-i-1}V&1\\
 n&1&1&F\\
 n+1&1&1&pF
\end{array}
\]
The entries are reduced modulo \(p\); the prescribed scalar twists
remain understood. For \(i<n\),
\eqref{interval-r1-polynomial-elimination} gives zero kernel and
cokernel \(R_1\). For \(i=n\), cancel the first component
\(-u\) and use \eqref{interval-r1-polynomial-operators}. The
result is \(k_\sigma[F,F^{-1}]\). Keeping track of the first
summand, the quotient map is
\[
 (x,\sum y_jz^j)\longmapsto
 x+\sum y_jF^{-(j+1)};
\]
it sends the old generator to itself, and hence is the natural
comparison. At \(i=n+1\), the reduced matrix is \((-u,q)\),
so both kernel and cokernel vanish.

For the left enlargement, the matrix is
\[
 (u,q)\longmapsto
 \bigl(-u\tF^{n-r},\,
 u\tau_{\mathrm J}^{-1}\tV+
 q(\tF^{n-m}-\tau_{\mathrm J}^{-1}\tV z)\bigr).
\]
The paths are \(\tF^{n-r}:r\to n\), \(\tF^{n-m}:m\to n\),
and \(\tau_{\mathrm J}^{-1}\tV:m\to n\), with coefficients
\[
\begin{array}{c|c|c|c}
 i&\tF^{n-r}&\tF^{n-m}&\tau_{\mathrm J}^{-1}\tV\\\hline
 m-1&1&1&V\\
 m\leq i<n&
 \begin{cases}1&i<r,\\p^{i-r}F&i\geq r\end{cases}
 &p^{i-m}F&1\\
 n&p^{n-r}&p^{n-m}&F^{-1}
\end{array}
\]
At \(i=m-1\), cancellation leaves \(\cdot(1-Vz)\), which
is invertible. The interior cases again follow from polynomial
elimination. At the top, multiply the second output component by
\(F\); the matrix becomes
\((u,q)\mapsto(-up^{n-r},u+q(p^{n-m}F-z))\).
Polynomial elimination gives zero kernel and cokernel
\(k_\sigma[F,F^{-1}]\), identically on the first summand.
This includes \(m=n\), where the zero powers of \(p\) are \(1\).

Thus every calculation identifies the map induced by
\eqref{interval-underived-comparison}, rather than merely an
abstract isomorphism of its cokernels. That map was defined on
actual complexes, so compatibility with the differential and
naturality follow.
\end{proof}

\subsection{The independence theorem}

\begin{theorem}\label{interval-independence-derived}
For finite intervals \(\I\subseteq\mathrm J\) and
\(M\in D_c^b(\FGauge\td\I)\), the natural comparison induces
an isomorphism
\[
 \rmL\widehat\rmt_{\mathrm J}(E_{\mathrm J\I}M)
       \xrightarrow{\sim}\rmL\widehat\rmt_\I(M)
       \quad\text{in }D(\Ch(R)).
\]
In particular, every \(\rmL^a\widehat\rmt_\I(M)\) is
independent of the interval as an actual \(R\)-complex. The
comparison is natural and compatible with successive enlargements.
\end{theorem}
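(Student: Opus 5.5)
By the cocycle relation \eqref{interval-comparison-cocycle}, it suffices to treat the two elementary enlargements \(\mathrm J=[m,n+1]\) and \(\mathrm J=[m-1,n]\); a general \(\I\subseteq\mathrm J\) is a composite of such steps, and compatibility with successive enlargements follows from the cocycle identity. I would first define the comparison at the derived level. Choose, by \Cref{resolution of coherent complexes} and \Cref{Exactness after completion}, a bounded resolution \(\widehat P^\bullet\) of \(jM\) by finite sums of \(\widehat G_{\I,r}\). Since \(E_{\mathrm J\I}\) is exact, \(E_{\mathrm J\I}\widehat P^\bullet\) still computes \(E_{\mathrm J\I}M\). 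Its terms are finite sums of the old generators \(\widehat G_{\I,r}\), viewed at level \(\mathrm J\). By \Cref{interval-old-generators-acyclic} these are \(\widehat\rmt_{\mathrm J}\)-acyclic. Hence \(\widehat\rmt_{\mathrm J}(E_{\mathrm J\I}\widehat P^\bullet)\) computes \(\rmL\widehat\rmt_{\mathrm J}(E_{\mathrm J\I}M)\) as an object of \(D(\Ch(R))\), that is, as a bounded complex in the outer direction of actual \(R\)-complexes. Applying \(\alpha_{\mathrm J\I}\) from \eqref{interval-underived-comparison} termwise then gives a map of double complexes
\[
 \widehat\rmt_{\mathrm J}(E_{\mathrm J\I}\widehat P^\bullet)\longrightarrow\widehat\rmt_\I(\widehat P^\bullet).
\]
This map is natural, and it is independent of the resolution up to homotopy.

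It then remains to show that this map is an isomorphism in \(D(\Ch(R))\). Because the outer complexes are bounded, it suffices to show that \(\alpha_{\mathrm J\I}\) is an isomorphism of actual \(R\)-complexes on each generator \(\widehat G_{\I,r}\). More precisely, it suffices to show that the cone \(C\) of the comparison on each generator vanishes in each inner degree \(s\), as a completed \(R\)-module of level \(\mathrm J\). For this I would use the detection principle \eqref{R1 detects vanishing of complete complexes}. The first input is \Cref{interval-r1-generator-comparison}, which says that \(R_1\otimes^{\rmL}_R\) of the comparison is an isomorphism in each inner degree, with no higher Tor on either side, so \(R_1\otimes_R^{\rmL}C^s=0\). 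The second input is completeness of both sides in each inner degree. These are the modules \(\widehat R(-i)\) and \(\widehat R^{\mathrm{top}}(-n)\) at level \(\I\); at level \(\mathrm J\) they are the underived cokernels of the two-term flat resolutions of \Cref{interval-right-flat-resolution} and \Cref{interval-left-flat-resolution}. Once completeness is known, \(C^s\) is complete of finite level with vanishing \(R_1\)-reduction, hence zero. The comparison is therefore an isomorphism termwise, and so an isomorphism of double complexes. Taking outer cohomology yields isomorphisms \(\rmL^a\widehat\rmt_{\mathrm J}(E_{\mathrm J\I}M)\cong\rmL^a\widehat\rmt_\I(M)\) in \(\Ch(R)\).

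The main obstacle is completeness of \(\widehat\rmt_{\mathrm J}(\widehat G_{\I,r})^s\) in the sense of \eqref{paper:ekedahl-completion}. This module contains the series summands \(\widehat G\langle z\rangle\), which are not finite over the completed generators. My plan is to write it as a cokernel of a map between \(\widehat\rmt_{\mathrm J}\) of flat terms, namely completed free modules and their restricted power series analogues. I would then check Ekedahl-completeness of each term directly: \(p\)-adically restricted series over \(\widehat R^{0}\), \(\widehat R^{1}\) and \(\widehat R^{\mathrm{top}}\) have \(R_a\)-reductions given by polynomial modules. Completeness of the cokernel would then follow from the injectivity of the two-term differential after every \(R_a\)-reduction; this injectivity comes from the same polynomial elimination \eqref{interval-r1-polynomial-elimination}, lifted along the Cartier filtration. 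If this direct check proves awkward, the fallback is to avoid completeness of \(C\) and instead verify the isomorphism on each \(R_a\)-reduction by induction on \(a\) through the Cartier sequences. Passing to \(R\varprojlim_a\) then compares the completions, and the identification \(\widehat\rmt_\I(\widehat G_{\I,r})=\widehat{\rmt_\I(G_{\I,r})}\) from \Cref{derived widehat t equals widehat derived t} identifies the right-hand side with its own completion.
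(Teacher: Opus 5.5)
\textbf{There is a genuine gap.} Your set-up agrees with the paper: the reduction to elementary enlargements, computing the level-\(\mathrm J\) functor on \(E_{\mathrm J\I}\widehat P^\bullet\) via \Cref{interval-old-generators-acyclic}, and applying \(\alpha_{\mathrm J\I}\) termwise. The next step is where it fails. You ask that \(\alpha_{\mathrm J\I}\) be an isomorphism of actual \(R\)-modules on each old generator, deduced from Ekedahl-completeness of \(\widehat\rmt_{\mathrm J}(\widehat G_{\I,r})^s\). That is false in general, which is why the paper explicitly declines to assume it.

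Here is a counterexample. Take \(k=\F_p\), so \(\sigma=\mathrm{id}\) and the rings below are commutative, and take \(\I=[0,0]\subset\mathrm J=[0,1]\), \(r=0\).
\begin{enumerate}
\item \(\widehat B_{\mathrm J}\) is the \(p\)-adic completion of \(W[\Phi,\Psi]/(\Phi\Psi-p)\), and \(\widehat G_{\I,0}=W\{\Phi^{\pm1}\}\).
\item In inner degree \(0\) and \(R\)-module degree \(1\), the level-\(\mathrm J\) side is \(X=\widehat R^{\,1}\otimes_{\widehat B_{\mathrm J}}W\{\Phi^{\pm1}\}\), where \(\Phi,\Psi\) act on \(\widehat R^{\,1}\) by right multiplication by \(F,V\).
\item As a right module, \(\widehat R^{\,1}\) is the \(p\)-adic completion \(\mathcal O\) of \(W[[V]][V^{-1}]\). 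This is a complete discrete valuation ring with uniformizer \(p\).
\item \(\widehat B_{\mathrm J}\) embeds both in \(\mathcal O\) and in the domain \(W\{\Phi^{\pm1}\}\). Hence the field \(\mathcal O[1/p]\) is an algebra over \(L=\operatorname{Frac}\widehat B_{\mathrm J}\).
\item Therefore \(X[1/p]=\mathcal O[1/p]\otimes_L\bigl(L\otimes_{\widehat B_{\mathrm J}}W\{\Phi^{\pm1}\}\bigr)\) is a tensor product of two nonzero \(L\)-vector spaces, so \(X\neq0\).
\end{enumerate}
The level-\(\I\) side has no component in \(R\)-module degree \(1\), so \(\alpha_{\mathrm J\I}\) kills \(X\) and is not injective on this generator. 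Since \Cref{interval-r1-generator-comparison} gives equal \(R_a\)-reductions, \(X\) is invisible to completion. So \(\widehat\rmt_{\mathrm J}(\widehat G_{\I,0})^0\) is not complete, and your claimed isomorphism of double complexes fails.

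Your proposed direct check already breaks on the series terms. The \(R_a\)-reductions of \(\widehat R\otimes_{\widehat B_{\mathrm J}}\widehat B_{\mathrm J}\langle z\rangle\) are \(R_a[z]\), so \(\sum_jV^jz^j\) lies in degree zero of its completion. It is not in the module itself, because coefficients there eventually lie in \(p\widehat R^{\,0}\).

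\textbf{The fallback does not close the gap.} Comparing \(R_a\)-reductions and passing to \(R\varprojlim_a\) only identifies the completion of the level-\(\mathrm J\) side with the level-\(\I\) side. It says nothing about whether the level-\(\mathrm J\) side is itself complete, and generator by generator it is not.

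\textbf{The missing idea:} completeness must come from coherence of \(E_{\mathrm J\I}M\) as a whole, which is how the paper argues.
\begin{enumerate}
\item Resolve \(E_{\mathrm J\I}M\) also by completed level-\(\mathrm J\) generators.
\item Both resolutions compute \(\rmL\widehat\rmt_{\mathrm J}(E_{\mathrm J\I}M)\). So each inner-degree row of \(\widehat\rmt_{\mathrm J}(E_{\mathrm J\I}\widehat P^\bullet)\) is quasi-isomorphic in \(D(R)\) to a bounded complex of finite sums of shifts of \(\widehat R\) and \(\widehat R^{\mathrm{top}}\), hence complete.
\item The outer cone \(C^s\) of the comparison on the entire resolution, not on each generator, is then complete with \(R_1\otimes_R^{\rmL}C^s=0\). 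So \(C^s=0\) by \eqref{R1 detects vanishing of complete complexes}.
\end{enumerate}
The conclusion is an outer quasi-isomorphism in \(D(\Ch(R))\), not an isomorphism of double complexes. Only the outer cohomology \(\rmL^a\) is identified strictly.
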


\begin{proof}
First let the enlargement be elementary. Choose a bounded resolution
\(P^\bullet\to M\) by finite sums of \(\widehat G_{\I,r}\).
It computes the functor also at level \(\mathrm J\), by exactness
of change of level and \Cref{interval-old-generators-acyclic}.
Fix the inner \(R\)-complex degree \(s\) and form the outer cone
\[
 C^s=\operatorname{Cone}\bigl(
  \widehat\rmt_{\mathrm J}(E_{\mathrm J\I}P^\bullet)^s
       \longrightarrow\widehat\rmt_\I(P^\bullet)^s\bigr)
       \in D(R).
\]
The preceding proposition applies termwise, and bounded
totalization gives \(R_1\otimes_R^{\rmL}C^s=0\).

Both sides defining \(C^s\) are Ekedahl-complete. For the smaller
level, its row is a bounded complex of finite sums of shifts of
\(\widehat R\) or \(\widehat R^{\mathrm{top}}\). These modules
are complete, and complete objects form a triangulated subcategory.
For the larger level, resolve \(E_{\mathrm J\I}M\) instead by
its own completed projective generators. This gives the same
completeness conclusion for a representative of that row. We do
not assume that individual old generators have complete images
at the larger level.

Thus \(C^s\) is complete, and
\Cref{R1 detects vanishing of complete complexes} gives \(C^s=0\).
Evaluation in each inner degree is exact and jointly detects outer
quasi-isomorphisms. The comparison is consequently an isomorphism
in \(D(\Ch(R))\), before inner totalization. Taking outer
cohomology gives the asserted strict isomorphisms. An arbitrary
finite enlargement is a succession of elementary ones; the cocycle
identity \eqref{interval-comparison-cocycle} proves compatibility
and independence of the succession.
\end{proof}

\section{Canonical representatives of diagonal complexes}
\label{sec:paper-canonical-representatives}

The preceding theorem removes the interval from
\(\mathcal S_\I(N)\). We now show that these representatives
also recover every morphism by a unique chain map. The absence of
maps in the opposite degree direction is the essential point.

\begin{lemma}\label{embedding-hom-comparison}
Let \(C,D\in\Ch(R)\) be bounded, with \(C^s,D^s\) supported
in \(R\)-module degrees \(-s,1-s\). Suppose that each \(C^s\)
is represented by a bounded complex of finite sums of
\(\widehat R(s)\), and each \(D^s\) is complete. Then
\[
 \Hom_{\Ch(R)}(C,D)\xrightarrow{\sim}\Hom_{D(R)}(C,D).
\]
Every degree \(-1\) graded \(R\)-linear map between these
complexes is zero.
\end{lemma}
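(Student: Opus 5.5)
The plan is to compute \(\Hom_{D(R)}(C,D)\) from a double complex built out of the pieces \(R\Hom_R(C^s,D^t)\), and to use the \(R\)-module degree supports to kill every piece except the diagonal one \(t=s\). The basic input is the completion adjunction \eqref{paper:completion-adjunction}. Each \(C^s\) is quasi-isomorphic to a bounded complex \(Q^{s,\bullet}\) of finite sums of \(\widehat R(s)\). Since \(D^t\) is complete, we get
\[
 R\Hom_R(C^s,D^t)\simeq\Hom^\bullet\bigl(Q^{s,\bullet},(D^t)^{-s}\bigr),
\]
where each \(\widehat R(s)\) contributes one copy of the evaluation \((D^t)^{-s}\). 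I will assume, as in the intended applications, that \(Q^{s,\bullet}\) is a resolution, concentrated in nonpositive degrees with \(\HH^0=C^s\). Then \(R\Hom_R(C^s,D^t)\) is concentrated in nonnegative degrees, and its \(\HH^0\) is \(\Hom_R(C^s,D^t)\).

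First I treat the degree \(-1\) claim. By hypothesis \(D^{s-1}\) is supported in \(R\)-module degrees \(1-s\) and \(2-s\), so \((D^{s-1})^{-s}=0\), and hence \(R\Hom_R(C^s,D^{s-1})=0\). Taking \(\HH^0\) shows that every graded \(R\)-linear map \(C^s\to D^{s-1}\) is zero. A degree \(-1\) map of complexes is a family of such maps, so it vanishes. More generally, \((D^t)^{-s}\ne0\) forces \(-s\in\{-t,1-t\}\), that is \(t\in\{s,s+1\}\). So only \(t=s\) and \(t=s+1\) can contribute.

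Next I form the bounded double complex \(\prod_s\Hom^\bullet(Q^{s,\bullet},D^{s+k})\). The horizontal index is \(k=t-s\in\{0,1\}\), and the vertical index \(j\geq0\) is the resolution degree. Since \(C\) and \(D\) are bounded, the \(Q^{s,\bullet}\) can be assembled, using the chain maps between them lifted along the resolutions, into a representative of \(C\) to which the completion adjunction still applies termwise. Its total complex then computes \(R\Hom_R(C,D)\). Total degree \(k+j\) equals \(0\) only for \(k=j=0\), and equals \(-1\) for no pair at all. Hence \(\Hom_{D(R)}(C,D)=\HH^0\) is the kernel of the total differential on \(\prod_s\Hom_R(C^s,D^s)\). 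That differential has two components. The first is the Hom-complex differential into \(\prod_s\Hom_R(C^s,D^{s+1})\), whose kernel is exactly the set of families commuting with the inner differentials. The second is the resolution differential into \(j=1\), which vanishes on maps that factor through \(\HH^0(Q^{s,\bullet})=C^s\). So the kernel is \(\Hom_{\Ch(R)}(C,D)\). Because the total complex has nothing in degree \(-1\), there are also no homotopies, which matches the final assertion of the lemma.

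The step I expect to be the main obstacle is rigorously identifying \(R\Hom_R(C,D)\) with the total complex of this double complex. The difficulty is that the \(\widehat R(s)\) are not projective \(R\)-modules, only acyclic for maps into complete targets. My first attempt will filter \(C\) by its stupid (brutal) truncations in the complex degree \(s\). Each graded piece is \(C^s[-s]\), to which the adjunction applies directly. The degree-support vanishing then makes the resulting spectral sequence collapse to the terms \(k=0,1\) with \(j\le1\), and reading off its edge maps should give both the comparison isomorphism and the absence of degree \(-1\) classes.
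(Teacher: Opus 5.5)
Your proposal is correct and is essentially the paper's proof: the paper likewise uses the completion adjunction and the support hypothesis to get \(R\Hom_R(C^s,D^v)=0\) for \(v<s\). It then reads off total degrees \(0\) and \(-1\) from the hyper-Ext spectral sequence \(E_1^{p,q}=\bigoplus_s\operatorname{Ext}^q_R(C^s,D^{s+p})\) of the brutal filtration, which is exactly your fallback. That fallback is the right way to make the argument rigorous, because assembling the chosen \(Q^{s,\bullet}\) into an honest double complex would only give a twisted complex, since composites of lifted differentials are merely null-homotopic. Your assumption that \(Q^{s,\bullet}\) lies in nonpositive degrees is also unnecessary, because \(\operatorname{Ext}^{<0}\) between modules vanishes.
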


\begin{proof}
For a complete target, the completion adjunction gives
\(R\Hom_R(\widehat R(s),D^v)= (D^v)^{-s}\), with
\(-s\) denoting \(R\)-module degree. This is zero when \(v<s\)
by the support hypothesis. The finite resolution of \(C^s\)
therefore gives \(R\Hom_R(C^s,D^v)=0\) for \(v<s\).
In the bounded hyper-Ext spectral sequence
\[
 E_1^{p,q}=\bigoplus_s\operatorname{Ext}_R^q(C^s,D^{s+p})
       \Longrightarrow\Hom_{D(R)}(C,D[p+q]),
\]
all terms with \(p<0\) or \(q<0\) vanish. Hence total degree
zero is \(E_2^{0,0}\), the group of chain maps, with no quotient
by degree \(-1\) maps and no higher differential. The edge map
is localization. Finally, a degree \(-1\) map has components
\(C^s\to D^{s-1}\), which vanish by the same argument.
\end{proof}

\begin{theorem}\label{embedding-diagonal-complexes}
There is an exact fully faithful functor
\[
 \mathcal S:\dc\longrightarrow\Ch(R),\qquad
 q\mathcal S\simeq\mathrm{id}_{\dc},
\]
whose values are supported in bidegrees \((u,-u)\) and
\((u,1-u)\). For \(N\) of level \(\I\), it is naturally
identified with \(\mathcal S_\I(N)\). These identifications
commute with enlargement of the interval. There are no nonzero
chain homotopies between its values.
\end{theorem}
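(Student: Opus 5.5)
I will construct \(\mathcal S\) by gluing the level-wise functors \(\mathcal S_\I\) of \eqref{interval-candidate-representative}. Every object of \(\dc\) has coherent bounded cohomology, hence finite level, so \(\dc=\bigcup_\I\dc_\I\). For \(N\in\dc_\I\) and \(\I\subseteq\mathrm J\), the gauge \(\FG^{\Tot}_{\mathrm J}(N)\) is identified with \(E_{\mathrm J\I}\FG^{\Tot}_\I(N)\), using the equality \(E_{\mathrm J\I}Z^0\FG_\I^{\Tot}(N)=Z^0\FG_{\mathrm J}^{\Tot}(N)\) termwise. \Cref{interval-independence-derived} in outer degree zero then gives natural isomorphisms \(\mathcal S_{\mathrm J}(N)\cong\mathcal S_\I(N)\) in \(\Ch(R)\). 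The cocycle identity \eqref{interval-comparison-cocycle} makes these compatible for \(\I\subseteq\mathrm J\subseteq K\). I will therefore define \(\mathcal S(N)\) as the colimit of this transitive system of isomorphisms; concretely, it is \(\mathcal S_\I(N)\) for any chosen \(\I\) containing the level, and any two choices are canonically identified. On morphisms, I take both objects to a common level \(\mathrm J\) and apply \(\mathcal S_{\mathrm J}\). Naturality of the comparison shows that this is independent of \(\mathrm J\) and functorial.

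Exactness and \(q\mathcal S\simeq\mathrm{id}\) then follow from \Cref{embedding-natural-representative}, applied at a level containing all three terms of a short exact sequence. The identification \(q\mathcal S_\I(N)\simeq N\) must also be compatible with enlargement. I expect this to hold because the comparison \(\alpha_{\mathrm J\I}\) is adjoint to the unit, and the completed counit of \Cref{Ekedahl equivalence for coherent complexes} is compatible with change of level. For the support statement, \Cref{description of t(G_r)} shows that the inner degree \(-u\) term of \(\widehat\rmt_\I(\widehat P^p)\) lives in \(R\)-module degrees \(u,u+1\). Hence \(\mathcal S(N)^s\), a subquotient of these terms, lies in \(R\)-module degrees \(-s,1-s\), which gives bidegrees \((u,-u)\) and \((u,1-u)\).

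Full faithfulness and the absence of homotopies reduce to \Cref{embedding-hom-comparison}, once its hypotheses are checked for \(C=\mathcal S(N)\) and \(D=\mathcal S(N')\) at a common level. The support hypothesis is the one just proved. For completeness of each \(D^s\), I use that the row \(\widehat\rmt_\I(\widehat P^\bullet)^s\) is a bounded complex of finite sums of shifts of \(\widehat R\) and \(\widehat R^{\mathrm{top}}\), which are complete. Strict outer concentration from \Cref{completed rmt on diagonal heart} makes \(\mathcal S(N)^s\) quasi-isomorphic to this row, and complete objects form a triangulated subcategory. The lemma then gives \(\Hom_{\Ch(R)}\cong\Hom_{D(R)}\), so \(\mathcal S\) is fully faithful because \(q\mathcal S\simeq\mathrm{id}\). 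It also shows that degree \(-1\) maps vanish, so there are no nonzero homotopies.

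The main obstacle is the hypothesis on the source \(C^s\): it must be represented by a bounded complex of finite sums of \(\widehat R(s)\) in the single shift matching inner degree \(s\), and the top summands involve \(\widehat R^{\mathrm{top}}\). My first attempt is to use the strict concentration again. The outer complex \(\widehat\rmt_\I(\widehat P^\bullet)^{s}\) is already a resolution of \(\mathcal S(N)^s\) whose terms are sums of \(\widehat R(s)\), with the same shift \(s\) in every term by \Cref{description of t(G_r)}, since inner degree \(-i\) carries \(\widehat R b_i\). At the top degree \(s=-n\), I would replace \(\widehat R^{\mathrm{top}}\) by its completion adjunction. Maps out of \(\widehat R^{\mathrm{top}}(s)\) into a complete target are computed by \(F\)-invertible limits of evaluation in degree \(-s\), so they still vanish when \(v<s\). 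This vanishing is what the lemma's argument actually uses.
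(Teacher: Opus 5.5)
Your outline coincides with the paper's proof. You glue the \(\mathcal S_\I\) through \Cref{interval-independence-derived} and the cocycle identity. You get exactness and \(q\mathcal S\simeq\mathrm{id}\) from \Cref{embedding-natural-representative} at a common level. You deduce full faithfulness and the absence of homotopies from \Cref{embedding-hom-comparison}, reading off completeness of the targets from the outer rows.

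The only divergence is the treatment of \(\widehat R^{\mathrm{top}}\), and there your stated justification does not work. The paper sidesteps the issue. It enlarges to a common interval whose top lies strictly above the \(R\)-module supports of both objects. Then the top row represents zero, and every remaining row is a bounded complex of finite sums of \(\widehat R(s)\), which is the lemma's hypothesis verbatim.

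You instead assert that derived maps out of \(\widehat R^{\mathrm{top}}(s)\) into a complete target are computed by \(F\)-limits of evaluation in degree \(-s\). That reasoning is incomplete. \(\widehat R^{\mathrm{top}}(s)\) is concentrated in the single \(R\)-module degree \(-s\) with \(d=0\). So the first syzygy of any graded free cover contains \(d\) times the generators, which lies in degree \(1-s\). Consequently \(\operatorname{Ext}^1\) out of such a module sees the target's component in degree \(1-s\), and that is exactly where \(D^{s-1}\) is supported. The vanishing of \((D^{s-1})^{-s}\) alone therefore does not give \(R\Hom_R(C^s,D^{s-1})=0\).

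You do not actually need that claim. The \(\widehat R^{\mathrm{top}}\) summands occur only in inner degree \(s=-n\), the lowest inner degree at the common level \([m,n]\). The lemma uses only \(R\Hom_R(C^s,D^v)=0\) for \(v<s\), together with \(\Hom(C^s,D^{s-1})=0\) for the homotopy statement. Since \(D\) is supported in inner degrees \([-n,-m]\), every \(D^v\) with \(v<-n\) is zero, so the required vanishing at the top is vacuous. Replace your final paragraph by this observation, or adopt the paper's enlargement, and the argument is complete.
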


\begin{proof}
The representative property and exactness at a fixed level are
\Cref{embedding-natural-representative}. For two representatives,
enlarge to a common interval whose top lies strictly above their
\(R\)-module supports. Choose bounded finite projective gauge
resolutions there. Strict outer concentration, proved in
\Cref{completed rmt on diagonal heart}, says that their rows
represent the terms of \(\mathcal S(N)\) and \(\mathcal S(N')\).
Every nonzero row is below the top and is therefore a bounded
complex of finite sums of \(\widehat R(s)\). The target terms
are complete, since they are represented by such finite complexes
of complete modules. The hypotheses of
\Cref{embedding-hom-comparison} hold, giving
\[
 \Hom_{\Ch(R)}(\mathcal S(N),\mathcal S(N'))
       =\Hom_{D(R)}(N,N')=\Hom_{\dc}(N,N').
\]
Naturality identifies the functor on morphisms with the inverse of
this bijection. It also proves the assertion about homotopies.

Finally, choose a level for each object and transport maps through
any common larger interval. The isomorphisms of
\Cref{interval-independence-derived} and their cocycle identity
make this independent of the common interval and compatible with
composition. Every short exact sequence fits in one finite
interval, so exactness holds globally.
\end{proof}

Combining the construction with
\Cref{completed rmt and diagonal cohomology} gives the intrinsic
formula
\begin{equation}\label{embedding-diagonal-cohomology}
 \rmL^a\widehat\rmt_\I(M)
    \cong\mathcal S\bigl(\widetilde{\HH}^{\,a}
          (\widehat{(\rmL\rmt_\I)^{\Tot}}(M))\bigr).
\end{equation}
Thus the cohomology of the resolution direction produces canonical
representatives of diagonal cohomology, and not just objects with
the same totalization.

\section{Three crystalline calculations}
\label{paper:applications}

The completed inverse turns integral Frobenius data into explicit
de Rham--Witt dominos. We illustrate this with three cyclic Dieudonn\'e
modules of rank six. Their Newton polygons and $a$-numbers agree,
but the calculation below gives different elementary factors in the
associated domino extensions.

Throughout this section $k$ is algebraically closed, and $U_s$
denotes the elementary domino recalled in \Cref{paper:background}.

\begin{theorem}\label{paper:three-examples}
Let $A/k$ be an abelian threefold, and suppose that
$\HH^1_{\cris}(A/W)\cong R^0/R^0P$ for one of the polynomials
$P$ below. The domino
of the differential $\HH^2(A,WO_A)\to\HH^2(A,W\Omega^1_{A/k})$,
denoted $D_{0,2}$, fits into an exact sequence of graded $R$-modules
\[
 0\longrightarrow U_3\oplus U_s\longrightarrow D_{0,2}
 \longrightarrow U_2\longrightarrow0,
\]
where $s$ is given by the following table:
\[
\begin{array}{c|c|c}
 P&s&\text{elementary factor types}\\ \hline
 F^3+V^3&4&\{2,3,4\}\\
 F^3+pF+V^3&7&\{2,3,7\}\\
 F^3+pF+pV+V^3&6&\{2,3,6\}.
\end{array}
\]
All three crystalline modules are supersingular with $a$-number one;
thus any $A$ satisfying one of these hypotheses is supergeneral.
\end{theorem}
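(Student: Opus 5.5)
The plan is to compute $D_{0,2}$ from the crystalline data through Ekedahl's equivalence, using the canonical representatives of \Cref{embedding-diagonal-complexes}.

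\textbf{Step 1: the input gauge.} Write $H=R^0/R^0P$ for $\HH^1_{\cris}(A/W)$. The crystalline cohomology of an abelian variety is the exterior algebra on $H^1$, so $\HH^2_{\cris}=\wedge^2H$, with Frobenius $\wedge^2F$ and its integral gauge structure. The domino $D_{0,2}$ is the domino part of the slope spectral sequence in total degree two. By Ekedahl's description, it is controlled by how far the Hodge filtration on $\wedge^2H$ deviates from the Frobenius-divisibility filtration $\{x: \wedge^2F\,x\in p^i\wedge^2H\}$. So we form the gauge $M$ of level $[0,2]$ with $M^i=(\wedge^2F)^{-1}(p^i\wedge^2H)$, in the normalization of \Cref{definition of F-gauge structure}. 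We then pass to the quotient gauge killing the part already accounted for by $\HH^2(WO)$ and $\HH^2(W\Omega^1)$ modulo torsion. Concretely, we write the $F$-matrix on the basis $e_a\wedge e_b$ of $\wedge^2H$, with $e_a=F^{a}\cdot1$ for $a=0,\dots,5$ modulo $P$, and use $V=pF^{-1}$ to express everything.

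\textbf{Step 2: resolution and inverse.} We resolve this quotient gauge by finite sums of $G_r$ using \Cref{resolution of coherent complexes}. In practice this will be a two-term presentation whose relations come from the kernel of $\wedge^2F$ modulo powers of $p$. We then apply $\widehat\rmt$, reading off the complex from \Cref{description of t(G_r)}. By \Cref{completed rmt and diagonal cohomology}, $\rmL^0\widehat\rmt$ is an actual $R$-complex whose cohomology in degree two yields $D_{0,2}$, identified by interval independence.

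\textbf{Step 3: the domino extension.} The kernel of $d:D^0\to D^1$ is the $V$-divisible part. We obtain the filtration $U_3\oplus U_s\subset D_{0,2}$ with quotient $U_2$ by computing, for each relevant generator, the least exponent $t$ such that $dV^a\neq0$ for all $a\ge t$. This is a valuation computation: find the largest $p$-power dividing the relevant entries of iterated $\wedge^2F$ modulo $P$.

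\textbf{Step 4: the three polynomials.} For each $P$ we reduce the powers $F^j$ modulo $P$ in $\wedge^2H$. For $P=F^3+V^3$, the relation $F^6=-p^3$ on $H$ gives uniform slopes and exponent $s=4$. The extra terms $pF$ and $pV$ perturb a single valuation, which gives $s=7$ and $s=6$ respectively. I expect this bookkeeping, namely tracking $p$-adic valuations of the entries of $(\wedge^2F)^j$ modulo $P$ until they stabilize, to be the main obstacle. I would do it by computing the Newton-type valuation function $j\mapsto v_p$ of the minimal entry explicitly.

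\textbf{Supersingularity and $a$-number.} For each $P$, the Newton polygon of $P$ has a single slope $1/2$ by the valuations of its coefficients, via the cyclic Newton-polygon theorem cited in \Cref{lowering the rank}. The $a$-number is $\dim H/(FH+VH)=1$, because $H$ is cyclic. Together these give supergenerality.
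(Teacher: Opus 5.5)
There is a genuine gap. The proposal never derives $s=4,7,6$, and the mechanism you suggest for extracting them would not produce the stated extension.

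The missing idea is to split the Hodge gauge of $E=\bigwedge^2\HH^1_{\cris}(A/W)$ along the largest $W$-lattice $\Lambda\subseteq E$ stable under $T=F_E/p$. Here the gauge has components $E^q=E\cap F_E^{-1}(p^qE)$ and $\tau=F_E/p^2$.
\begin{itemize}
\item Stability together with $\det T$ being a unit gives $T\Lambda=\Lambda$. So $(\Lambda,\Lambda,p\Lambda)$ is a subgauge: the gauge of the Dieudonn\'e module $(\Lambda,T,pT^{-1})$ in bidegree $(1,-1)$. This is the precise content of your phrase ``the part already accounted for \ldots\ modulo torsion''. Computing $\Lambda$ through explicit congruences on the coordinates is the real work.
\item The quotient $Q$ satisfies $p^2Q=0$. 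Both layers of $0\to Q[p]\to Q\to pQ\to0$ have zero arrows from gauge degree $0$ to degree $1$, since $px\in\Lambda$ implies $T^{-1}(px)\in\Lambda$.
\item Each layer is therefore just a pair $C\subset B$ with a semilinear $T:C\to B$. Such data decompose into chains, and a chain with $s$ vertices is $\HH^0(\FG^{\Tot}(U_s))$.
\item Exhibiting explicit chains gives $pQ$ corresponding to $U_2$ and $Q[p]$ corresponding to $U_3\oplus U_s$. Applying the equivalence to the torsion sequence gives a triangle whose end terms are modules, hence the exact sequence.
\end{itemize}
By contrast, your Steps~3--4 would at best give valuation data for $(\wedge^2F)^j$, not a submodule $U_3\oplus U_s$ with quotient $U_2$. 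The values of $s$ are simply asserted. No projective resolution or $\widehat\rmt$ computation is needed, and you never specify the gauge to which yours would be applied.

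You also omit the geometric comparison. Torsion-freeness and $15=3+9+3$ give the Mazur--Ogus equality. That is what lets you invoke Ekedahl's identification of $\FG^{\Tot}(\tH^2(R\Gamma(A,W\Omega^\bullet_{A/k})))$ with the Hodge gauge of $E$. From there you need three steps:
\begin{itemize}
\item the triangle $H\to X\to D\to H[1]$, with $H$ the Dieudonn\'e module of $\Lambda$;
\item the identification of $\HH^0(X)$ with $\HH^2(A,WO_A)\to F^\infty B\HH^2(A,W\Omega^1_{A/k})$;
\item the observation that this equals $D_{0,2}$ because it is already a domino.
\end{itemize}

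There are two further errors. First, $F^av$ for $0\le a\le5$ is not a $W$-basis of $R^0/R^0P$. For $P=F^3+V^3$ one has $F^4v=-pV^2v$ and $F^5v=-p^2Vv$, so these vectors span an index-$p^3$ sublattice. The whole computation depends on the integral structure, so you should use $v,Fv,F^2v,V^3v,V^2v,Vv$. Second, the kernel of $d$ on the degree-zero part of a domino is not its $V$-divisible part. In $U_t$ the kernel is spanned by $1,V,\dots,V^{t-1}$, while the $V$-divisible part is zero.

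Your supersingularity and $a$-number argument is fine once the relation is rewritten as $F^6+p\alpha F^4+p^2\beta F^2+p^3=0$, with $(\alpha,\beta)=(0,0),(1,0),(1,1)$.
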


The hypotheses are conditional on the displayed crystalline
presentations. The theorem describes an extension, not a direct-sum
decomposition of $D_{0,2}$. No restriction on $p$ is required for the
calculation.

\subsection{The maximal stable lattice}

Write $M=\HH^1_{\cris}(A/W)$ and choose a cyclic generator $v$ with
$F^3v+p\alpha Fv+p\beta Vv+V^3v=0$, where
$(\alpha,\beta)=(0,0),(1,0),(1,1)$ in the three cases. In the basis
\[
 (e_0,e_1,e_2,e_3,e_4,e_5)
 =(v,Fv,F^2v,-V^3v,-V^2v,-Vv),
\]
Frobenius satisfies
\[
 Fe_0=e_1,\quad Fe_1=e_2,\quad
 Fe_2=e_3-p\alpha e_1+p\beta e_5,\quad
 Fe_3=pe_4,\quad Fe_4=pe_5,\quad Fe_5=-pe_0.
\]
The last sign is forced by $FV=p$. Multiplying the cyclic relation
by $F^3$ gives $F^6+p\alpha F^4+p^2\beta F^2+p^3=0$.
Its Newton polygon has slope $1/2$, and $M/(FM+VM)$ has dimension
one over $k$, proving the final assertion of the theorem.

Put $E=\bigwedge^2M$, $b_{ij}=e_i\wedge e_j$, and $T=F_E/p$, where
$F_E=\bigwedge^2F$. We use the following single calculation for all
three cases:
\begin{equation}\label{paper:uniform-Frobenius}
\begin{aligned}
 Tb_{01}&=p^{-1}b_{12},& Tb_{02}&=p^{-1}b_{13}+\beta b_{15},\\
 Tb_{03}&=b_{14},& Tb_{04}&=b_{15},\\
 Tb_{05}&=b_{01},& Tb_{12}&=p^{-1}b_{23}+\alpha b_{12}+\beta b_{25},\\
 Tb_{13}&=b_{24},& Tb_{14}&=b_{25},\\
 Tb_{15}&=b_{02},& Tb_{23}&=b_{34}-p\alpha b_{14}-p\beta b_{45},\\
 Tb_{24}&=b_{35}-p\alpha b_{15},&
 Tb_{25}&=b_{03}-p\alpha b_{01}+p\beta b_{05},\\
 Tb_{34}&=pb_{45},& Tb_{35}&=pb_{04},\qquad Tb_{45}=pb_{05}.
\end{aligned}
\end{equation}

\begin{lemma}\label{paper:stable-lattice}
The largest $T$-stable $W$-submodule $\Lambda\subseteq E$ consists
of the vectors $x=\sum x_{ij}b_{ij}$ satisfying
\begin{equation}\label{paper:lattice-conditions}
\begin{gathered}
 x_{01}\in p^2W,\qquad x_{02},x_{04},x_{12},x_{15}\in pW,\qquad
 x_{05}-p\alpha x_{25}\in p^2W,\\
 x_{45}+\beta x_{25}-\alpha x_{14},\quad
 \beta x_{14}-\alpha x_{03},\quad
 \beta x_{03}-\alpha x_{25},\quad
 \beta x_{25}-\alpha x_{14}\in pW.
\end{gathered}
\end{equation}
It contains $p^2E$, and $T\Lambda=\Lambda$.
\end{lemma}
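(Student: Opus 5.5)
The plan is to characterize \(\Lambda\) as \(\bigcap_{k\geq0}T^{-k}E\), i.e.\ the set of \(x\in E\) with \(T^kx\in E\) for all \(k\geq0\). This intersection is the largest \(T\)-stable \(W\)-submodule of \(E\). Since \(T\) is \(\sigma\)-semilinear with slopes of \(F_E\) all equal to \(1\) (the Newton polygon of \(M\) has slope \(1/2\)), the isocrystal \((E[1/p],T)\) is isoclinic of slope \(0\). Hence some lattice is \(T\)-stable with \(T\) bijective on it, and the intersection is a lattice. I would compute it by iterating the uniform formulas \eqref{paper:uniform-Frobenius} on the basis \(b_{ij}\), which split into orbits.

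First, record the action of \(T\) as a permutation of the fifteen basis vectors up to the powers \(p^{-1},1,p\), with small correction terms weighted by \(\alpha,\beta\). Ignoring corrections, the orbits are
\[
b_{01}\to b_{12}\to b_{23}\to b_{34}\to b_{45}\to b_{05}\to b_{01},\qquad b_{02}\to b_{13}\to b_{24}\to b_{35}\to b_{04}\to b_{15}\to b_{02},\qquad b_{03}\to b_{14}\to b_{25}\to b_{03}.
\]
Along each cycle the exponents of \(p\) sum to zero. For the six-cycles they are \((-1,-1,0,1,1,0)\) and \((-1,0,0,1,0,0)\); the three-cycle carries units.

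Next, determine \(\Lambda\) in the case \(\alpha=\beta=0\). Here the cycles decouple and \(\Lambda\) is a direct sum of \(p^{c_{ij}}Wb_{ij}\). The exponents \(c_{ij}\) are the minimal ones with \(c_{T(ij)}\leq c_{ij}+e_{ij}\), normalized so that the minimum is \(0\). This gives
\[
c_{01}=2,\qquad c_{12}=c_{05}=1\ (\text{resp. } c_{05}=2),\qquad c_{02}=c_{04}=c_{15}=1,
\]
and I will read these off against \eqref{paper:lattice-conditions}. The \(pW\) conditions involving \(\alpha,\beta\) must then reduce to \(x_{45}\in pW\).

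Then treat general \((\alpha,\beta)\). Write the candidate \(\Lambda'\) defined by \eqref{paper:lattice-conditions}. I would verify directly that \(p^2E\subseteq\Lambda'\) and that \(T\Lambda'\subseteq\Lambda'\). The latter means computing each coordinate of \(Tx\) as a linear expression in the \(x_{ij}\) (with \(\sigma\)), for instance \((Tx)_{01}=\sigma(x_{05})-p\alpha\,\sigma(x_{25})\), and checking that each defining congruence of \(\Lambda'\) holds.

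For maximality, take \(x\in\Lambda\) and show that the conditions are forced, working in the order in which the \(p^{-1}\) entries appear:

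1. \(Tx\in E\) forces \(x_{01},x_{02},x_{12}\in pW\).
2. Applying \(T\) repeatedly propagates these constraints backwards around the cycles, yielding \(x_{01}\in p^2W\) and the conditions on \(x_{04},x_{15},x_{05}\).
3. The mixed mod-\(p\) conditions come from the coefficients of \(b_{12},b_{23}\) in \(T^k x\) for \(k\) up to about \(6\), where the \(\alpha b_{12}\) and \(\beta b_{25}\) terms feed the three-cycle into the first six-cycle.

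Since \(p^2E\subseteq\Lambda\), all of this can be done modulo \(p^2\), which makes it a finite linear computation over \(W/p^2\).

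The main obstacle I expect is step 3: extracting exactly the four mixed congruences and checking that they are preserved by \(T\). Here the \(\sigma\)-twist and the coupling between cycles matter. I would organize it by reducing modulo \(p\) on the quotient \(\Lambda_0/p\Lambda_0\), which makes the composite \(T^3\) on the span of \(b_{03},b_{14},b_{25}\) explicit.

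Finally, \(T\Lambda=\Lambda\). This follows because \(\Lambda\) is a \(T\)-stable lattice in a slope-zero isocrystal, so \(\det T\) is a unit. Alternatively, compare lengths: \(T\Lambda\subseteq\Lambda\) and \([\Lambda:T\Lambda]=v_p(\det T)=0\), since \(\det F_E=(\det F)^5\) has valuation \(15\) and \(\det(T)=p^{-15}\det F_E\).
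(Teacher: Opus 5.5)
Your proposal is correct and is essentially the paper's proof. Your $\bigcap_k T^{-k}E$ is the paper's ``successively impose integrality of $Tx,T^2x,\ldots$''. Stability is checked by the same coordinate identities, which the paper records, e.g.\ $y_{05}-p\alpha y_{25}=p\ell_4(x)-p\alpha\beta x_{12}$ with $(Tx)_{ij}=\sigma(y_{ij})$ and $\ell_4(x)=x_{45}+\beta x_{25}-\alpha x_{14}$. Finally, $T\Lambda=\Lambda$ follows from $v_p(\det T)=15-15=0$.

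There are two small slips. In the $\alpha=\beta=0$ warm-up the exponents are $c_{01}=c_{05}=2$ and $c_{02}=c_{04}=c_{12}=c_{15}=c_{45}=1$. Also, the iteration must be carried through $T^7x$, not about $6$ steps: the last congruence $\beta x_{25}-\alpha x_{14}\in pW$ first appears there, and in case~(2) it is the only source of $x_{14}\in pW$.
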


\begin{proof}
Starting with $E$, successively impose integrality of $Tx,T^2x,\ldots$.
Equation \eqref{paper:uniform-Frobenius} first gives
$x_{01},x_{02},x_{12}\in pW$, then
$x_{01}\in p^2W$ and $x_{05},x_{15}\in pW$, and then
$x_{04}\in pW$ and $x_{05}-p\alpha x_{25}\in p^2W$.
The next four conditions are precisely the four linear expressions
in the second line of \eqref{paper:lattice-conditions}, in their
displayed order. Every stable submodule is therefore contained in
the stated lattice.

For stability, denote those four expressions by
$\ell_4(x),\ldots,\ell_7(x)$ and write
$(Tx)_{ij}=\sigma(y_{ij})$. The only remaining checks are
\[
\begin{aligned}
 y_{05}-p\alpha y_{25}&=p\ell_4(x)-p\alpha\beta x_{12},\\
 \ell_4(y)&=\ell_5(x)+\beta^2x_{12}
                 +p((\alpha^2-\beta)x_{23}+x_{34}),\\
 \ell_5(y)&=\ell_6(x)-p\alpha\beta x_{23},\\
 \ell_6(y)&=\ell_7(x)-\alpha\beta x_{12},\\
 \ell_7(y)&=\ell_5(x)+\beta^2x_{12}+p\alpha^2x_{23}.
\end{aligned}
\]
These identities, together with the first six conditions, give
$T\Lambda\subseteq\Lambda$. Finally $\det(F)=p^3$, so
$\det(T)=(p^3)^5/p^{15}=1$. Hence the integral Frobenius matrix on
$\Lambda$ has unit determinant and $T\Lambda=\Lambda$.
\end{proof}

The congruences make the three lattices particularly simple. In
case~(1), the basis vectors $b_{01},b_{05}$ acquire a factor $p^2$,
and $b_{02},b_{04},b_{12},b_{15},b_{45}$ a factor $p$; all others
remain unchanged. In case~(2), one additionally replaces
$b_{03},b_{14},b_{25}$ by their multiples by $p$. In case~(3), start
with the basis in case~(2) and replace $pb_{25}$ by
$b_{25}+b_{03}+b_{14}+pb_{05}$, which is fixed by $T$.

\subsection{The two torsion layers}

Let $E^q=E\cap F_E^{-1}(p^qE)$. The Hodge gauge of $E$ has level
$[0,2]$, components $E^0,E^1,E^2$, operators $\tF=p$ and
$\tV$ equal to inclusion, and endpoint map $\tau=F_E/p^2$.
By \eqref{paper:uniform-Frobenius}, a basis of $E^1$ is obtained
from the $b_{ij}$ by multiplying $b_{01},b_{02},b_{12}$ by $p$.
The subgauge $(\Lambda,\Lambda,p\Lambda)$ is the gauge of the
Dieudonn\'e module $(\Lambda,T,pT^{-1})$ placed in bidegree
$(1,-1)$. Write $Q$ for the quotient. Then
\[
 Q^0=E/\Lambda,\qquad Q^1=E^1/\Lambda,\qquad
 Q^2=E^2/p\Lambda.
\]
Using $\tau$ to identify degrees two and zero, its four arrows are
\[
\begin{array}{c|c|c}
 &Q^0\longrightarrow Q^1&Q^1\longrightarrow Q^0\\ \hline
 \text{first edge}&p&\mathrm{inclusion}\\
 \text{edge through }\tau&pT^{-1}&T.
\end{array}
\]
In particular, $Q^1\subseteq Q^0$ and $T$ is a partial operator
with domain $Q^1$. The lattice calculation gives
\[
\begin{array}{c|c|c}
 \text{case}&Q^0&Q^1\\ \hline
 (1)&(W/p^2)^2\oplus k^5&W/p^2\oplus k^4\\
 (2)&(W/p^2)^2\oplus k^8&W/p^2\oplus k^7\\
 (3)&(W/p^2)^2\oplus k^7&W/p^2\oplus k^6.
\end{array}
\]
Thus $p^2Q=0$. Both layers in
\begin{equation}\label{paper:torsion-layers}
 0\longrightarrow Q[p]\longrightarrow Q\xrightarrow{p}pQ
 \longrightarrow0
\end{equation}
have zero arrows from degree zero to degree one: if $px\in\Lambda$,
then $pT^{-1}x=T^{-1}(px)\in\Lambda$. Each layer is therefore
specified by a pair of vector spaces $C\subset B$ and a semilinear
map $T:C\to B$.

For reference, the gauge $\HH^0(\FG^{\Tot}(U_s))$ has the model
\[
 B=\langle u_0,\ldots,u_{s-1}\rangle_k,\qquad
 C=\langle u_1,\ldots,u_{s-1}\rangle_k,\qquad
 Tu_j=u_{j-1}\quad(1\leq j<s).
\]
This follows immediately from the definition of $\FG$; its other
cohomology groups vanish. A chain with $s$ vertices therefore
represents $U_s$, provided its nonterminal vertices form the middle
space. We distinguish this gauge from the domino by writing it as
$\HH^0(\FG^{\Tot}(U_s))$, rather than using $U_s$ for both objects.

\begin{lemma}\label{paper:chain-decomposition}
The quotient $pQ$ is the gauge of $U_2$. In cases $(1),(2),(3)$,
respectively, $Q[p]$ is the gauge of $U_3\oplus U_4$,
$U_3\oplus U_7$, and $U_3\oplus U_6$.
\end{lemma}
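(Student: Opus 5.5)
The plan is to describe each layer of \eqref{paper:torsion-layers} explicitly as a triple \((C\subset B,\,T:C\to B)\), using the \(k\)-bases furnished by \Cref{paper:stable-lattice} and the list of basis vectors of \(E^1\). Then we decompose the triple into chains and compare each chain with the model of \(\HH^0(\FG^{\Tot}(U_s))\) recalled above.

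\emph{Step 1: set up the two layers.} For \(pQ\) we use \(B=pQ^0=pE/(p E\cap\Lambda)\) and \(C=pQ^1\). By the table of \(Q^0,Q^1\), both are spanned by the classes of \(p\) times the two coordinates that contribute \(W/p^2\) to \(Q^0\). These are \(b_{01}\) and \(b_{05}\) in all three cases, since the \(p^2\)-conditions in \eqref{paper:lattice-conditions} only concern \(x_{01}\) and \(x_{05}-p\alpha x_{25}\). Thus \(\dim B=2\), and \(\dim C=1\) because only one of them lies in \(E^1\). Indeed \(b_{01}\notin E^1\) and \(b_{05}\in E^1\).

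From \eqref{paper:uniform-Frobenius}, \(Tb_{05}=b_{01}\), so \(T(pb_{05})=pb_{01}\). This is the chain \(u_1=pb_{05}\mapsto u_0=pb_{01}\), which is the model for \(U_2\). We must check that the correction term \(p\alpha x_{25}\) does not alter the classes modulo \(\Lambda\). That is a one-line check, since \(p\cdot p b_{25}\in p^2E\subseteq\Lambda\).

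\emph{Step 2: set up \(Q[p]\).} Its underlying spaces are \(B=Q^0[p]\) and \(C=Q^1[p]=Q^1\cap B\). They have dimensions \(6,9,8\) for \(B\), and correspondingly \(4,7,6\) for \(C\), in the three cases. In each case \(\dim B-\dim C=2\), as required for two chains. We pick representatives: the classes of \(pb_{01},pb_{05}\) together with the \(k\)-coordinates listed after \Cref{paper:stable-lattice}. In case (1) these are \(b_{02},b_{04},b_{12},b_{15},b_{45}\), in case (2) also \(b_{03},b_{14},b_{25}\), and in case (3) the modified basis.

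We then compute \(T\) on \(C\) modulo \(\Lambda\) using \eqref{paper:uniform-Frobenius}, discarding terms lying in \(\Lambda\). In case (1) we expect the chains \(b_{15}\mapsto b_{02}\mapsto\ldots\), where \(Tb_{15}=b_{02}\) and \(Tb_{02}\equiv p^{-1}b_{13}\) modulo \(\Lambda\), to terminate correctly. Concretely, we will find vertices \(b_{45}\mapsto pb_{05}\mapsto pb_{01}\) giving a chain of length \(3\), and \(b_{12}\)-, \(b_{04}\)-, \(b_{15}\)-, \(b_{02}\)-vertices threaded into a chain of length \(4\). In each chain the terminal vertex is the unique one outside \(C\).

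Cases (2) and (3) proceed the same way. The extra vertices \(b_{03},b_{14},b_{25}\) (respectively the modified vector) lengthen the second chain to \(7\) (respectively \(6\)).

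\emph{Step 3: conclude.} A triple that is a disjoint union of chains, each with all nonterminal vertices in \(C\), is the direct sum of the model gauges. Hence it is the gauge of the corresponding \(U_s\).

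The main obstacle is Step 2. There we must verify that \(T\) maps the chosen basis of \(C\) exactly onto basis vectors of \(B\) modulo \(\Lambda\), rather than onto combinations. In cases (2) and (3), the \(\alpha,\beta\)-terms in \(Tb_{12}\), \(Tb_{02}\), \(Tb_{25}\) and the congruences \(\ell_4,\ldots,\ell_7\) may mix vertices. We will handle this by a semilinear change of basis, using triangularity along the chain. First we choose the chain generators as \(T\)-preimages of the terminal vertices. Then we use \(\ell_4,\ldots,\ell_7\) to show that the cross terms lie in \(\Lambda\) or in the span of later chain vertices, so that they can be absorbed. Since \(k\) is algebraically closed, the semilinear adjustments can always be solved.
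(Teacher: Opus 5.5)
Your treatment of $pQ$ agrees with the paper's: the chain is $[pb_{05}]\mapsto[pb_{01}]$, with $[pb_{01}]\notin pQ^1$. The gap is in Step~2, which carries the whole content of the lemma, and the specifics you commit to there are wrong.

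First, the dimension count. Each $W/p^2$ summand contributes a line to the $p$-torsion, so $\dim Q^0[p]=7,10,9$ and $\dim Q^1[p]=5,8,7$, not $6,9,8$ and $4,7,6$. Your counts are incompatible with $U_3\oplus U_4$, where $\dim B=7$ and $\dim C=5$. They also contradict your own list of seven representatives in case~(1).

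More seriously, your case~(1) chains are not chains. Since $pb_{01}\in E^1$ and $p^2b_{01}\in\Lambda$, the class $[pb_{01}]$ lies in $C$, and $T[pb_{01}]=[b_{12}]\neq0$. So $b_{45}\mapsto pb_{05}\mapsto pb_{01}$ does not terminate. Conversely, $[b_{12}]$ and $[b_{02}]$ both lie outside $Q^1$ and are independent modulo $C$, so they cannot sit in a single chain. Your remark about $Tb_{02}$ is also beside the point, because $[b_{02}]$ is not in the domain of $T$. The correct chains are $[b_{04}]\to[b_{15}]\to[b_{02}]$ and $[b_{45}]\to[pb_{05}]\to[pb_{01}]\to[b_{12}]$.

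For cases~(2) and~(3) you produce no chains, and the appeal to triangular absorption and algebraic closedness does not supply them. The cross terms are not perturbations to be absorbed; they decide the answer. In case~(2), $T$ acts by $[b_{03}]\mapsto[b_{14}]\mapsto[b_{25}]\mapsto[b_{03}]-[pb_{01}]$. Had the last term vanished modulo $\Lambda$, these three classes would span a subspace $S\subseteq Q^1[p]$ with $TS=S$, contradicting the maximality of $\Lambda$. It is exactly this term that splices the would-be cycle into the four-vertex chain. The result is the seven-vertex chain, with vertices such as $[b_{14}+b_{45}]$, $[b_{25}+pb_{05}]$, $[b_{03}-pb_{01}]$ and $[b_{14}-b_{12}]$. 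No semilinear equations need to be solved, since the paper's basis changes have coefficients $\pm1$. In any case, solvability would not determine the chain lengths.

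To close the gap, you can write down the chains explicitly, as the paper does. Check the arrows with \eqref{paper:uniform-Frobenius}, and check with \eqref{paper:lattice-conditions} that the vertices form a basis of $Q[p]^0$ and the nonterminal vertices a basis of $Q[p]^1$. Alternatively, argue structurally. $T$ is injective on $Q^1[p]$ because $T\Lambda=\Lambda$, and maximality of $\Lambda$ excludes nonzero subspaces $S\subseteq Q^1[p]$ with $TS\subseteq S$. Hence the layer is a direct sum of chains, whose lengths are fixed by the dimensions of the domains of the powers $T^j$. Those dimensions still have to be computed in each case.
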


\begin{proof}
Write $[b]$ for the class of $b\in E$ in $Q^0$. The quotient
$pQ$ is the chain $[pb_{05}]\mapsto[pb_{01}]$. The common
three-vertex chain in $Q[p]$ is
\[
 [b_{04}]\xrightarrow{T}[b_{15}]\xrightarrow{T}[b_{02}].
\]
In case~(1), its complement is
\[
 [b_{45}]\xrightarrow{T}[pb_{05}]\xrightarrow{T}[pb_{01}]
 \xrightarrow{T}[b_{12}].
\]
In case~(2), take the complementary chain
$x_6\mapsto x_5\mapsto\cdots\mapsto x_0$, where
\[
\begin{aligned}
 x_6&=[b_{14}+b_{45}],&x_5&=[b_{25}+pb_{05}],\\
 x_4&=[b_{03}],&x_3&=[b_{14}],\\
 x_2&=[b_{25}],&x_1&=[b_{03}-pb_{01}],\qquad x_0=[b_{14}-b_{12}].
\end{aligned}
\]
In case~(3), the relation
$[b_{25}]=-[b_{03}+b_{14}+pb_{05}]$ gives the chain
$y_5\mapsto y_4\mapsto\cdots\mapsto y_0$, with
\[
\begin{aligned}
 y_5&=[b_{03}+b_{14}+b_{45}],&y_4&=-[b_{03}],\\
 y_3&=-[b_{14}],&y_2&=[b_{03}+b_{14}+pb_{05}],\\
 y_1&=[-b_{03}-pb_{05}+pb_{01}],&y_0&=[-b_{14}-pb_{01}+b_{12}].
\end{aligned}
\]
In each case \eqref{paper:uniform-Frobenius} verifies the arrows,
and \eqref{paper:lattice-conditions} shows that the vertices form
a basis of $Q[p]^0$, with the nonterminal vertices a basis of
$Q[p]^1$. These basis changes have determinant $\pm1$ and use
coefficients fixed by $\sigma$, so they respect semilinearity in
every characteristic.
\end{proof}

Applying Ekedahl's equivalence to \eqref{paper:torsion-layers}
now gives a triangle
\[
 U_3\oplus U_s\longrightarrow
 \widehat{(\rmL\rmt)^{\Tot}}(Q)\longrightarrow U_2
 \longrightarrow(U_3\oplus U_s)[1],
 \qquad s=4,7,6.
\]
Both end terms are ordinary graded $R$-modules. The middle object
is consequently represented by a graded $R$-module $D$, with
\begin{equation}\label{paper:domino-extension}
 0\longrightarrow U_3\oplus U_s\longrightarrow D
 \longrightarrow U_2\longrightarrow0.
\end{equation}
It is a domino, since dominos are closed under extensions. This
argument only uses the computed layers; it does not assume that
the inverse equivalence preserves kernels of multiplication by $p$.

\subsection{Comparison with the geometric domino}

We finish the proof of \Cref{paper:three-examples}. Set
$C_A=R\Gamma(A,W\Omega^\bullet_{A/k})$ and
$X=\widetilde\HH^2(C_A)\in\dc$. Abelian varieties have
torsion-free crystalline cohomology, and in degree two
\[
 \HH^2_{\cris}(A/W)=\bigwedge^2M=E,\qquad
 15=h^{0,2}+h^{1,1}+h^{2,0}=3+9+3.
\]
Thus $C_A$ satisfies the Mazur--Ogus equality. Ekedahl's
reconstruction identifies $\FG^{\Tot}(X)$ with the Hodge gauge of
$E$; see \cite[Chapter~III, Section~4 and Theorem~IV.1.2]{Ek3}.
The subgauge $(\Lambda,\Lambda,p\Lambda)$ corresponds to
$H=(\Lambda,T,pT^{-1})$ in bidegree $(1,-1)$. Its quotient therefore
gives a triangle $H\to X\to D\to H[1]$, and hence
$\HH^0(X)\cong D$.

The compatibility of ordinary and diagonal truncation identifies
$\HH^0(X)$ with the two-term graded module
\[
 \HH^2(A,WO_A)\longrightarrow
 F^\infty B\HH^2(A,W\Omega^1_{A/k});
\]
see \cite[Chapter~I, Section~1]{Ek3}. We have proved that this
module is a domino, so its degree-zero stable-cycle part vanishes.
Its associated domino is therefore $D_{0,2}$ itself. Substituting
this identification into \eqref{paper:domino-extension} proves
the theorem.

\clearpage
\section*{Acknowledgements}

The influence of Ekedahl's work on this paper is evident throughout.
Y.Y. is partially supported by the National Natural Science Foundation
of China (NSFC grant no.~12231001).

\section*{AI and computational assistance}

The author used OpenAI Codex, including GPT-6 Astra, to explore proof
strategies, propose intermediate algebraic identities, assist with
symbolic calculations, and revise the exposition. This assistance
concerned, in particular, the finite projective resolutions, the
comparison under changes of interval, and the calculations in
\Cref{paper:three-examples}. The author verified the arguments and
calculations retained in the final manuscript and takes full
responsibility for its contents.

\end{document}